\numberwithin{equation}{section}
\theoremstyle{plain}
\newtheorem*{mainresult}{Theorem A}
\newtheorem{theorem}{Theorem}[section]
\newtheorem{lemma}[theorem]{Lemma}
\newtheorem{corollary}[theorem]{Corollary}
\newtheorem{proposition}[theorem]{Proposition}
\newtheorem{observation}[theorem]{Observation}
\newtheorem{cit}[theorem]{Citation}
\newtheorem{conjecture}[theorem]{Conjecture}
\newtheorem*{lemma*}{Lemma}
\newtheorem*{corollary*}{Corollary}
\theoremstyle{definition}
\newtheorem{definition}[theorem]{Definition}
\newtheorem{remark}[theorem]{Remark}
\newcommand{\R}{\mathbb{R}}
\newcommand{\N}{\mathbb{N}}
\newcommand{\Z}{\mathbb{Z}}
\newcommand{\calP}{\mathcal{P}}
\newcommand{\defeq}{\mathrel{\mathop{:}}=}
\newcommand{\abs}[1]{\lvert #1 \rvert}
\newcommand{\gen}[1]{\langle #1 \rangle}
\newcommand{\lk}{\operatorname{lk}}
\newcommand{\st}{\operatorname{st}}
\newcommand{\Hom}{\operatorname{Hom}}
\newcommand{\CAT}{\operatorname{CAT}}
\newcommand{\F}{\operatorname{F}}
\newcommand{\FP}{\operatorname{FP}}
\newcommand{\id}{\operatorname{id}}
\newcommand{\match}{\mathcal{M}}
\newcommand{\elsplit}{\Lambda}
\newcommand{\elmerge}{\mathrm{V}}
\newcommand{\elnothing}{\mathrm{I}}
\DeclareMathOperator{\image}{image}
\numberwithin{equation}{section}
\begin{document}

\title[$\Sigma^m(F_n)$ and $\Sigma^m(H_n)$]{On the $\Sigma$-invariants of generalized Thompson groups and Houghton groups}

\date{\today}
\subjclass[2010]{Primary 20F65;   
                Secondary 57M07} 

\keywords{Thompson group, Houghton group, BNSR-invariant, CAT(0) cube complex}

\author{Matthew C.~B.~Zaremsky}
\address{Department of Mathematical Sciences, Binghamton University, Binghamton, NY 13902}
\email{zaremsky@math.binghamton.edu}

\begin{abstract}
 We compute the higher $\Sigma$-invariants $\Sigma^m(F_{n,\infty})$ of the generalized Thompson groups $F_{n,\infty}$, for all $m,n\ge 2$. This extends the $n=2$ case done by Bieri, Geoghegan and Kochloukova, and the $m=2$ case done by Kochloukova. Our approach differs from those used in the $n=2$ and $m=2$ cases; we look at the action of $F_{n,\infty}$ on a $\CAT(0)$ cube complex, and use Morse theory to compute all the $\Sigma^m(F_{n,\infty})$.
 
 We also obtain lower bounds on $\Sigma^m(H_n)$, for the Houghton groups $H_n$, again using actions on $\CAT(0)$ cube complexes, and discuss evidence that these bounds are sharp.
\end{abstract}

\maketitle
\thispagestyle{empty}


\section*{Introduction}

A group is of \emph{type $\F_m$} if it has a classifying space with compact $m$-skeleton. These \emph{finiteness properties} of groups are natural generalizations of finite generation ($\F_1$) and finite presentability ($\F_2$). In 1987 and 1988, Bieri, Neumann, Strebel and Renz introduced a family of geometric invariants $\Sigma^m(G)$ ($m\in\N$), defined whenever $G$ is of type $\F_m$, which reveal a wealth of information about $G$ and $\Hom(G,\R)$. However, since the $\Sigma^m(G)$ contain so much information, e.g., they serve as a complete catalog of precisely which subgroups of $G$ containing $[G,G]$ have which finiteness properties, they are in general quite difficult to compute.

Thanks to this difficulty, there are very few groups whose higher $\Sigma$-invariants are completely known. 
If $\Hom(G,\R)$ is trivial then all $\Sigma^m(G)$ are empty, so in that case the question is uninteresting, e.g., for groups with finite abelianization. Focusing on groups for which $\Hom(G,\R)$ is sufficiently large, the only really robust family of groups for which the question of all the higher $\Sigma$-invariants is 100\% solved is the family of right-angled Artin groups, done independently by Bux--Gonzalez \cite{bux99} and Meier--Meinert--VanWyk \cite{meier98}. Other interesting families of groups for which there are substantial partial results about the higher $\Sigma$-invariants include Artin groups \cite{meier01}, solvable $S$-arithmetic groups \cite{bux04}, and metabelian groups \cite{meinert96,meinert97,kochloukova99}. The question of the higher $\Sigma$-invariants of a direct product, in terms of the invariants of the factors, is also solved \cite{bieri10a}.

The generalized Thompson groups $F_{n,\infty}$ ($n\ge2$), \textbf{which we will just denote by $F_n$ from now on}, can be quickly defined by their standard presentations
$$F_n\cong\gen{x_i~(i\in\N_0)\mid x_j x_i = x_i x_{j+(n-1)} \text{ for all }i<j}\text{.}$$
These groups were first introduced by Brown in \cite{brown87} as an ``$F$-like'' version of the Higman--Thompson groups $V_{n,r}$. They generalize Thompson's group $F$, namely $F=F_2$. The $F_n$ are all of type $\F_\infty$ \cite{brown87}. The group $F_n$ can also be described as the group of orientation preserving piecewise linear self homeomorphisms of $[0,1]$ with slopes powers of $n$ and breakpoints in $\Z[1/n]$. These groups are interesting for many reasons; from the perspective of $\Sigma$-invariants they are interesting for instance since every proper quotient of $F_n$ is abelian \cite{brown87,brin98}, and so the $\Sigma$-invariants reveal the finiteness properties of \emph{every} normal subgroup of $F_n$. Also, $F_n$ abelianizes to $\Z^n$, and so homomorphisms to $\R$ become more and more prevalent as $n$ goes up. In contrast, the ``type $V$'' Higman--Thompson groups $V_{n,r}$ are virtually simple \cite{higman74}, so have no non-trivial maps to $\R$ (and their $\Sigma$-invariants are empty).

The main result of the present work is a complete computation of $\Sigma^m(F_n)$ for all relevant $m$ and $n$. The previously known results are as follows. First, $\Sigma^1(F_2)$ was computed in the original Bieri--Neumann--Strebel paper \cite{bieri87}. In \cite{bieri10}, Bieri, Geoghegan and Kochloukova computed $\Sigma^m(F_2)$ for all $m$. In the other ``variable'', $n$, Kochloukova computed $\Sigma^2(F_n)$ for all $n$ in \cite{kochloukova12}. The techniques used there however proved difficult to extend to the cases when $n$ and $m$ are both greater than $2$. Our approach differs from those in \cite{bieri10} and \cite{kochloukova12}. We look at the action of $F_n$ on a proper $\CAT(0)$ cube complex $X_n$, and use topological and combinatorial tools to compute all the $\Sigma^m(F_n)$. This builds off work of the author and Witzel, in \cite{witzel15}, where the $\Sigma^m(F_2)$ computations from \cite{bieri10} were redone using such an action of $F=F_2$.

Taking Kochloukova's computation of $\Sigma^2(F_n)$ for granted, our main result can be phrased succinctly as:

\begin{mainresult}
 For any $n,m \ge 2$, we have $\Sigma^m(F_n)=\Sigma^2(F_n)$.
\end{mainresult}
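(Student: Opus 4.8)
The plan is to establish the non-trivial inclusion $\Sigma^2(F_n)\subseteq\Sigma^m(F_n)$ for every $m\ge2$; the reverse inclusion $\Sigma^m(F_n)\subseteq\Sigma^2(F_n)$ is automatic from the general monotonicity $\Sigma^1(F_n)\supseteq\Sigma^2(F_n)\supseteq\Sigma^3(F_n)\supseteq\cdots$. Equivalently, writing $\Sigma^\infty(F_n)\defeq\bigcap_{k\ge1}\Sigma^k(F_n)$, it suffices to show $\Sigma^2(F_n)\subseteq\Sigma^\infty(F_n)$: once every character class in $\Sigma^2(F_n)$ is seen to lie in all the $\Sigma^m(F_n)$, monotonicity forces $\Sigma^m(F_n)=\Sigma^2(F_n)$ for all $n,m\ge2$. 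So I fix a character $\chi\colon F_n\to\R$ with $[\chi]\in\Sigma^2(F_n)$, described via Kochloukova's computation of $\Sigma^2(F_n)$ and the identification $S(F_n)\cong S^{n-1}$ coming from $F_n^{\mathrm{ab}}\cong\Z^n$. As will be clear below, the argument depends on $\chi$ only through the signs of its values on the standard generators (together with one inequality), so there are only finitely many combinatorial types to treat, and the flip symmetry of $F_n$ cuts these down further.

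The tool is the action of $F_n$ on the proper $\CAT(0)$ cube complex $X_n$ --- contractible, since it is $\CAT(0)$ --- together with the Morse-theoretic characterization of the $\Sigma$-invariants in terms of such actions. Concretely, to prove $[\chi]\in\Sigma^m(F_n)$ I would produce a Morse function $h=h_\chi\colon X_n\to\R$ that is affine with pairwise distinct vertex values on each cube and is $F_n$-equivariant when $F_n$ acts on $\R$ via $\chi$, and then verify that the superlevel filtration $\{h^{-1}([t,\infty))\}_{t\in\R}$ is essentially $(m-1)$-connected. The function $h_\chi$ is read off the combinatorial data labelling the vertices of $X_n$ ($n$-ary forest diagrams / tree pairs), with weights extracted from $\chi$ through the abelianization; this formulation never uses that $\chi$ has discrete image, so non-discrete characters are handled in exactly the same way. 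By Bestvina--Brady Morse theory on cube complexes, the homotopy type of the filtration changes only by coning off the ascending links of vertices of $h_\chi$; these are full subcomplexes of vertex links, which one identifies with (generalized) matching-type complexes $\match$, $\gmatch$ whose vertex sets are dictated by the sign pattern of $\chi$ along the leaves of the relevant trees.

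The heart of the proof --- and the step I expect to be the real obstacle --- is a connectivity estimate for these ascending links that is \emph{uniform in the dimension}: one must show that, precisely when $[\chi]\in\Sigma^2(F_n)$, the ascending links are $(m-1)$-connected for \emph{every} $m$, i.e.\ their connectivity grows without bound as the size of the underlying trees grows. I would prove this by exhibiting an explicit discrete Morse matching on the link complexes --- the trichotomy of matched-up/matched-down/critical cells encoded by the elementary moves $\elsplit$, $\elmerge$, $\elnothing$ --- together with a $\nerve$-type decomposition into simpler pieces and an induction on the size parameter. The matching is to be arranged so that the only critical cells surviving in low degrees are exactly those producing the degree-two obstruction already detected by $\Sigma^2(F_n)$; once $[\chi]\in\Sigma^2(F_n)$ removes that obstruction, the familiar behaviour of matching complexes of this shape takes over --- once simply connected they are automatically highly connected --- and no new obstruction can appear in any higher degree. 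Feeding the resulting estimate into the Morse-theoretic criterion yields $[\chi]\in\Sigma^m(F_n)$ for all $m$, hence $[\chi]\in\Sigma^\infty(F_n)$; together with Kochloukova's computation of $\Sigma^2(F_n)$ and monotonicity this gives $\Sigma^m(F_n)=\Sigma^2(F_n)$ for all $n,m\ge2$.

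Two subsidiary points must be handled along the way, neither expected to be serious next to the link connectivity computation. First, one checks that the $F_n$-action on $X_n$ meets the finiteness hypotheses needed to run the $\Sigma$-criterion (cell stabilizers of type $\F_\infty$ and the appropriate cocompactness in each dimension) --- structural features of $X_n$ set up beforehand. Second, one confirms that $h_\chi$ can be taken affine with distinct vertex values per cube in such a way that only finitely many $F_n$-orbits of ascending-link types occur, so that the dimension-uniform connectivity estimate need only be proved for those finitely many families.
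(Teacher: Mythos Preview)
Your overall architecture matches the paper's: reduce to $\Sigma^2(F_n)\subseteq\Sigma^\infty(F_n)$, use the action on the Stein--Farley complex $X_n$, realise $\chi$ as a height function, and analyse $(\chi,f)$-ascending links via the matching-complex model. But the paper makes two specific moves that your sketch does not, and your proposed substitute for the hard step is not yet a proof.

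\textbf{Reduction via the HNN structure.} The paper does \emph{not} treat every character uniformly through the cube complex. It first uses the ascending HNN decomposition $F_n=F_n(1)*_{x_0}$ together with \cite{bieri10} to dispose of all characters of the form $a\chi_0+b\chi_1$ (Lemmas~\ref{lem:poles} and~\ref{lem:equator}) and then, still via the HNN trick, to reduce an arbitrary $\chi$ with some $c_i\ne0$ to the case $a=b=0$ (Lemma~\ref{lem:push_to_hemispheres}). Only the pure $\psi$-characters $\chi=\sum_i c_i\psi_i$ are handled by the Morse argument on $X_n$. Your plan to push every character through the cube complex is in principle feasible (the paper remarks that the $n=2$ version was done this way in \cite{witzel15}), but it is substantially more work and your proposal contains no mechanism for it.

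\textbf{The connectivity step.} For the remaining case the paper works on the truncated complex $Y=X_n^{p\le f\le pn^2}$ with $p=4m+5$ (needed for cocompactness; this truncation makes the ascending links non-flag, a wrinkle you do not address). The key device is Lemma~\ref{lem:popular_simplex}: if a complex is flag \emph{relative to} some large simplex $\sigma$ and every vertex is joinable to all but a bounded number of vertices of $\sigma$, then the complex is highly connected. In the ascending link one exhibits an explicit simplex $\sigma_q$ of $(n-1)$-merges whose index $q$ is chosen so that $c_{q-1}<c_q$, making $\sigma_q$ genuinely $\chi$-ascending; every other vertex misses at most two vertices of $\sigma_q$, and Lemma~\ref{lem:popular_simplex} gives $(m-1)$-connectedness. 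Your proposed route --- a discrete Morse matching on the link using the $\elsplit/\elmerge/\elnothing$ trichotomy, plus the slogan that such matching complexes are ``automatically highly connected once simply connected'' --- is not carried out, and that slogan is not a theorem: matching complexes of linear graphs have connectivity growing linearly in the number of vertices, not jumping to infinity once $\pi_1$ vanishes. Absent an actual matching or an actual inductive argument, this is the gap. The paper's popular-simplex lemma is precisely the missing idea.
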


Note that for any group $G$ of type $\F_\infty$ one always has
$$\Sigma^1(G)\supseteq \Sigma^2(G) \supseteq \cdots \supseteq \Sigma^\infty(G) \text{.}$$

A more detailed description of $\Sigma^m(F_n)$ requires a lot of terminology and notation: we show that for $2\le n,m$, if $\chi=a\chi_0+c_0\psi_0+\cdots+c_{n-3}\psi_{n-3}+b\chi_1$ is a character of $F_n$ then $[\chi]$ fails to lie in $\Sigma^m(F_n)$ if and only if all $c_i$ are zero, and both $a$ and $b$ are non-negative. The reader will have to consult Section~\ref{sec:groups_and_chars} to see what all this means.

Computing $\Sigma$-invariants has historically proved difficult, and here one difficulty is in finding a way to realize an arbitrary character of $F_n$ as a height function on $X_n$. We do this by first introducing some measurements (``proto-characters'') on $n$-ary trees and forests, and extrapolating these to characters on $F_n$ and height functions on $X_n$. Once all the characters are cataloged, we use Morse theory and combinatorial arguments to compute all the $\Sigma^m(F_n)$. One key tool, Lemma~\ref{lem:popular_simplex}, is a new technique for proving higher connectivity properties of a simplicial complex, building off of recent work of Belk and Forrest.

A pleasant consequence of Theorem~A is the following, which is immediate from Citation~\ref{cit:bnsr_fin_props} below, plus the aforementioned fact that every proper quotient of $F_n$ is abelian.

\begin{corollary*}
 Let $N$ be any normal subgroup of $F_n$. Then as soon as $N$ is finitely presented, it is already of type $F_\infty$. \qed
\end{corollary*}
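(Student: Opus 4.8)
The plan is to combine Theorem~A with the BNSR finiteness criterion recorded in Citation~\ref{cit:bnsr_fin_props}. That criterion states that if $G$ is of type $\F_m$ and $N\trianglelefteq G$ is normal with $G/N$ abelian, then $N$ is of type $\F_k$ (for $k\le m$) if and only if $[\chi]\in\Sigma^k(G)$ for every character $\chi\colon G\to\R$ that vanishes on $N$.

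First I would dispose of the degenerate cases: if $N$ is the trivial group it is of type $\F_\infty$ for free, and if $N=F_n$ it is of type $\F_\infty$ because $F_n$ is. So assume $1\neq N\subsetneq F_n$. Then $F_n/N$ is a proper quotient of $F_n$, hence abelian by \cite{brown87,brin98}, so $[F_n,F_n]\le N$ and the criterion applies to the pair $(F_n,N)$. Now suppose $N$ is finitely presented, i.e.\ of type $\F_2$. Applying the ``only if'' direction of the criterion with $m=k=2$ (recall $F_n$ is of type $\F_\infty$), every character of $F_n$ vanishing on $N$ represents a point of $\Sigma^2(F_n)$. By Theorem~A, $\Sigma^2(F_n)=\Sigma^m(F_n)$ for all $m\ge2$, so these same characters all represent points of $\Sigma^m(F_n)$ for every $m$. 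Feeding this back into the ``if'' direction of the criterion shows $N$ is of type $\F_m$ for every $m$, i.e.\ of type $\F_\infty$.

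There is no real obstacle here once Theorem~A is in hand: the corollary is essentially the observation that, because the chain $\Sigma^2(F_n)\supseteq\Sigma^3(F_n)\supseteq\cdots$ never strictly decreases, the finiteness length of a normal subgroup of $F_n$ cannot take a value strictly between $2$ and $\infty$. The only point requiring care is the verification of the hypothesis $[F_n,F_n]\le N$ needed to invoke Citation~\ref{cit:bnsr_fin_props}, which is exactly where the ``every proper quotient of $F_n$ is abelian'' property enters.
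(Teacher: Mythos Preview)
Your proof is correct and follows exactly the approach the paper intends: the corollary is stated with a \qed and is explained in the surrounding text as ``immediate from Citation~\ref{cit:bnsr_fin_props} below, plus the aforementioned fact that every proper quotient of $F_n$ is abelian,'' which is precisely the argument you have written out in full, including the handling of the trivial cases and the use of Theorem~A to pass from $\Sigma^2$ to $\Sigma^m$.
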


It should be noted that it is possible to find subgroups of $F_n$ that are finitely presented but not of type $\FP_3$, and hence not of type $\F_\infty$ \cite[Theorem~B]{bieri10}. However, the corollary says that for normal subgroups this cannot happen.

Another immediate application of Theorem~A comes from \cite{kochloukova14}, namely Kochloukova's Theorem~C in that paper holds for all $F_n$. In words, not only is the deficiency gradient of $F_n$ zero with respect to any chain of finite index subgroups with index going to infinity, but so too are all the higher dimensional analogs. This can be viewed as a strong finiteness property. For more details and background, see \cite{kochloukova14}.

\medskip

At the end of the present work, we discuss the problem of computing the higher $\Sigma$-invariants of the \emph{Houghton groups} $H_n$. The group $H_n$ is of type $\F_{n-1}$ but not $\F_n$ \cite[Theorem~5.1]{brown87}, so one can ask what $\Sigma^m(H_n)$ is for $1\le m\le n-1$. We compute large parts of each $\Sigma^m(H_n)$ (Theorem~\ref{thrm:houghton_pos}), using the action of $H_n$ on a $\CAT(0)$ cube complex, and conjecture that anything not accounted for by the theorem must lie outside $\Sigma^m(H_n)$ (Conjecture~\ref{conj:houghton_neg}). The conjecture holds for $m=1,2$, but it seems that proving it for higher $m$ will require new ideas.

\medskip

The paper is organized as follows. After some topological setup in Section~\ref{sec:prelims}, we define the groups $F_n$ and their characters in Section~\ref{sec:groups_and_chars}. In Section~\ref{sec:stein_farley} we discuss a $\CAT(0)$ cube complex $X_n$ on which $F_n$ acts, and in Section~\ref{sec:links_matchings} we provide a combinatorial model for links in $X_n$. In Section~\ref{sec:computations} we prove Theorem~A. Section~\ref{sec:houghton} is devoted to the Houghton groups $H_n$; we compute lower bounds on $\Sigma^m(H_n)$, and discuss the problem of trying to make this bound sharp.

\subsection*{Acknowledgments} I would first like to acknowledge Stefan Witzel, my coauthor on \cite{witzel15}; some of the tools used here (e.g., Lemma~\ref{lem:morse}) were developed there, and working on that paper spurred me to attempt this problem. I am grateful to Robert Bieri and Desi Kochloukova for first suggesting I try this problem and for helpful conversations along the way, and to Matt Brin for many fruitful discussions as well.


\section{Topological setup}\label{sec:prelims}

Let $G$ be a finitely generated group. A \emph{character} of $G$ is a homomorphism $\chi\colon G\to \R$. If $\chi(G)\cong\Z$, then $\chi$ is \emph{discrete}. The \emph{character sphere} of $G$, denoted $S(G)$, is $\Hom(G,\R) \cong \R^d$ with $0$ removed and modulo positive scaling, so $S(G)\cong S^{d-1}$, where $d$ is the rank of $G/[G,G]$. The \emph{Bieri--Neumann--Strebel (BNS) invariant} $\Sigma^1(G)$ of $G$ is the subset of $S(G)$ defined by:
\[
\Sigma^1(G) \defeq \{[\chi]\in S(G)\mid \Gamma_{0\le\chi} \text{ is connected}\} \text{.}
\]
Here $\Gamma$ is the Cayley graph of $G$ with respect to some finite generating set, and $\Gamma_{0\le\chi}$ is the full subgraph spanned by those vertices $g$ with $0\le\chi(g)$. We write $[\chi]$ for the equivalence class of $\chi$ in $S(G)$.

The \emph{Bieri--Neumann--Strebel--Renz (BNSR) invariants}, also called \emph{$\Sigma$-invariants} $\Sigma^m(G)$ ($m\in\N\cup\{\infty\}$), introduced in \cite{bieri88}, are defined for groups $G$ of type $\F_m$. Our working definition for $\Sigma^m(G)$ is almost identical to Definition~8.1 in \cite{bux04}:

\begin{definition}[$\Sigma$-invariants]\label{def:bnsr}
 Let $G$ be of type $\F_m$, and let $Y$ be an $(m-1)$-connected $G$-CW complex. Suppose $Y^{(m)}$ is $G$-cocompact and the stabilizer of any $k$-cell is of type $\F_{m-k}$. For $0\ne\chi\in\Hom(G,\R)$, there is a \emph{character height function}, denoted $h_\chi$, i.e., a continuous map $h_\chi\colon Y\to\R$, such that $h_\chi(gy)=\chi(g)+h_\chi(y)$ for all $y\in Y$ and $g\in G$. Then $[\chi]\in\Sigma^m(G)$ if and only if the filtration $(Y^{t\le h_\chi})_{t\in\R}$ is essentially $(m-1)$-connected\footnote{Meaning that for all $t\in\R$ there exists $s\le t$ such that the inclusion $Y^{t\le h_\chi} \to Y^{s\le h_\chi}$ induces the trivial map in $\pi_k$ for all $k\le m-1$.}.
\end{definition}

Here $Y^{t\le h_\chi}$ is defined to be the full\footnote{A subcomplex is \emph{full} if as soon as it contains a simplex's vertices, it also contains the simplex.} subcomplex of $Y$ supported on those vertices $y$ with $t\le h_\chi(y)$. The only difference between our definition and \cite[Definition~8.1]{bux04} is that we use $Y^{t\le h_\chi}$ instead of $h_\chi^{-1}([t,\infty))$. However, the first filtration is essentially $(m-1)$-connected if and only if the second is, so our definition is equivalent.

As mentioned in \cite{bux04}, this definition of $\Sigma^m(G)$ is independent of the choices of $Y$ and $h_\chi$. We will sometimes abuse notation and write $\chi$ instead of $h_\chi$, for both the character and the character height function.

One important application of the $\Sigma$-invariants is:
\begin{cit}\cite[Theorem~1.1]{bieri10}\label{cit:bnsr_fin_props}
 Let $G$ be a group of type $\F_m$ and $N$ a subgroup of $G$ containing $[G,G]$ (so $N$ is normal). Then $N$ is of type $\F_m$ if and only if for every $\chi\in\Hom(G,\R)$ with $\chi(N)=0$ we have $[\chi]\in\Sigma^m(G)$.
\end{cit}

For example, if $\chi \colon G\twoheadrightarrow\Z$ is a discrete character, then $\ker(\chi)$ is of type $\F_m$ if and only if $[\pm\chi]\in\Sigma^m(G)$.

\medskip

The setup of Definition~\ref{def:bnsr} is particularly tractable in the situation where $Y$ is an affine cell complex and $\chi$ is affine on cells. Then discrete Morse theory enters the picture, and higher (essential) connectivity properties can be deduced from higher connectivity properties of ascending/descending links.

An \emph{affine cell complex} $Y$ is the quotient of a disjoint union of euclidean polytopes modulo an equivalence relation that maps every polytope injectively into $Y$, with images called \emph{cells}, such that such cells intersect in faces (see \cite[Definition~I.7.37]{bridson99}). In particular, every cell has an affine structure. The link $\lk_Y v$ of a vertex $v$ of $Y$ is the set of directions in $Y$ emanating out of $v$. The link is naturally a spherical simplicial complex, whose closed cells consist of directions pointing into closed cells of $Y$. If every cell is a cube of some dimension, we call $Y$ an affine cube complex.

The following is taken directly from \cite{witzel15}:

\begin{definition}[Morse function]\label{def:morse}
 The most general kind of \emph{Morse function} on $Y$ that we will be using is a map $(h,s) \colon Y \to \R \times \R$ such that both $h$ and $s$ are affine on cells. The codomain is ordered lexicographically, and the conditions for $(h,s)$ to be a Morse function are the following: the function $s$ takes only finitely many values on vertices of $Y$, and there is an $\varepsilon > 0$ such that every pair of adjacent vertices $v$ and $w$ either satisfy $\abs{h(v) - h(w)} \ge \varepsilon$, or else $h(v) = h(w)$ and $s(v)\ne s(w)$.
\end{definition}

Let us summarize some setup from \cite{witzel15}: We call $h$ the \emph{height}, $s$ the \emph{secondary height} and $(h,s)$ the \emph{refined height}. Every cell has a unique vertex of maximal refined height and a unique vertex of minimal refined height. The \emph{ascending star} $\st^{(h,s)\uparrow}_Y v$ of a vertex $v$ (with respect to $(h,s)$) is the subcomplex of $\st_Y v$ consisting of cells $\sigma$ such that $v$ is the vertex of minimal refined height in $\sigma$. The \emph{ascending link} $\lk^{(h,s)\uparrow}_Y v$ of $v$ is the link of $v$ in $\st^{(h,s)\uparrow}_Y v$. The \emph{descending star} and the \emph{descending link} are defined analogously. Since $h$ and $s$ are affine, ascending and descending links are full subcomplexes. We denote by $Y^{p \le h \le q}$ the full subcomplex of $Y$ supported on vertices $v$ with $p \le h(v) \le q$.

With our definition of Morse function as above, we have the following Morse Lemma, which was proved in \cite{witzel15} (compare to \cite[Corollary~2.6]{bestvina97}):

\begin{lemma}[Morse Lemma]\label{lem:morse}
 Let $p,q,r \in \R \cup \{\pm \infty\}$ with $p \le q \le r$. If for every vertex $v \in Y^{q < h \le r}$ the descending link $\lk^{(h,s)\downarrow}_{Y^{p \le h}} v$ is $(k-1)$-connected then the pair $(Y^{p \le h \le r},Y^{p \le h \le q})$ is $k$-connected. If for every vertex $v \in Y^{p \le h < q}$ the ascending link $\lk^{(h,s)\uparrow}_{Y^{h \le r}} v$ is $(k-1)$-connected then the pair $(Y^{p \le h \le r},Y^{q \le h \le r})$ is $k$-connected.
\end{lemma}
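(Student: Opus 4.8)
The plan is to run the standard discrete Morse theory argument in the spirit of \cite[Corollary~2.6]{bestvina97}, reducing an essential connectivity statement about sublevel complexes to the local connectivity of links by pushing spheres and disks across the height function. One half comes for free: replacing $(h,s)$ by $(-h,-s)$ gives another Morse function (same $\varepsilon$, still finitely many secondary values) which interchanges ascending and descending stars and links and sends $Y^{a\le h\le b}$ to $Y^{-b\le h\le-a}$; a direct check shows that the descending statement for $(-h,-s)$ with parameters $-r\le -q\le -p$ is exactly the ascending statement for $(h,s)$ with parameters $p\le q\le r$. So I would only prove the descending half: assuming $\lk^{(h,s)\downarrow}_{Y^{p\le h}}v$ is $(k-1)$-connected for every $v\in Y^{q<h\le r}$, the pair $(Y^{p\le h\le r},Y^{p\le h\le q})$ is $k$-connected.

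To prove $k$-connectedness it suffices, for each $i\le k$, to homotope an arbitrary map $\alpha\colon(D^i,S^{i-1})\to(Y^{p\le h\le r},Y^{p\le h\le q})$ rel $S^{i-1}$ into $Y^{p\le h\le q}$; for $i=0$ this is surjectivity of $\pi_0(Y^{p\le h\le q})\to\pi_0(Y^{p\le h\le r})$, read with the convention that ``$(-1)$-connected'' means ``nonempty''. After a cellular approximation $\alpha(D^i)$ lies in a finite subcomplex, so only finitely many refined heights occur on its vertices. Let $m$ be the largest refined height of a vertex of $Y^{q<h\le r}$ meeting $\alpha(D^i)$, and induct downward on $(m,N_m)$ in lexicographic order, where $N_m$ counts such vertices of refined height $m$; if no such vertex exists then $\alpha(D^i)\subseteq Y^{p\le h\le q}$ and we are done, so fix such a vertex $v$.

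The heart of the argument, and the step needing the most care, is the local push-off at $v$. Put $U=\alpha^{-1}(\st_Y^{\circ}v)$, the preimage of the open star; after the cellular approximation $\overline U$ is a subcomplex of $D^i$ of dimension $\le i$ disjoint from $S^{i-1}$ (since $h(v)>q$), with $\alpha(\overline U)\subseteq\overline{\st_Y v}$ and $\alpha(\partial U)\subseteq\lk_Y v$, where $\partial U=\overline U\smallsetminus U$. Since $v$ has \emph{maximal} refined height among vertices met by $\alpha$, every cell containing $v$ that $\alpha|_{\overline U}$ meets has $v$ as its refined-height-maximal vertex, so lies in the descending star; hence $\alpha(\overline U)$ lies in the closed descending star of $v$, which is the cone $v\ast D$ on $D\defeq\lk^{(h,s)\downarrow}_{Y^{p\le h}}v$, and $\alpha(\partial U)\subseteq D$. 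One must resist retracting $v\ast D$ onto $D$ — a cone deformation retracts onto its base only if the base is already contractible. Instead, $(k-1)$-connectedness of $D$ makes the pair $(v\ast D,D)$ $k$-connected, so $\alpha|_{\overline U}\colon(\overline U,\partial U)\to(v\ast D,D)$, a map of a CW pair of dimension $\le k$, compresses rel $\partial U$ into $D$ through a homotopy supported in $v\ast D\subseteq Y^{p\le h\le r}$. Regluing the compressed map to the untouched $\alpha|_{D^i\smallsetminus U}$ gives $\alpha'\simeq\alpha$ rel $S^{i-1}$, with image in $Y^{p\le h\le r}$, now missing $v$ and introducing no new image vertices beyond those of $D$, all of which have refined height $<m$; so $(m,N_m)$ strictly decreases and the induction proceeds.

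The remaining work I would treat as routine: the inclusions $v\ast D\subseteq Y^{p\le h\le r}$ and $\alpha(\partial U)\subseteq D$ (from $p\le q<h(v)\le r$ and $h(w)\le h(v)$ for $w\in D$), the cellular approximation and regluing details, and — the one genuinely delicate point — termination of the downward induction: one uses that $s$ takes only finitely many values and that adjacent vertices of distinct heights differ in height by at least $\varepsilon$, so that only finitely many height ``bands'' are ever in play and within each only finitely much happens. This finiteness bookkeeping is the part to be careful with, and it is carried out in detail in \cite{witzel15}, whose argument is the one I would follow.
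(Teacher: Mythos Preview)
Your argument is correct but organized differently from the paper's. You run a \emph{push-down} argument: given a map $(D^i,S^{i-1})\to(Y^{p\le h\le r},Y^{p\le h\le q})$, locate the vertex $v$ of maximal refined height in the image, use $k$-connectedness of the cone pair $(v\ast D,D)$ with $D=\lk^{(h,s)\downarrow}_{Y^{p\le h}}v$ to compress the map off $v$, and iterate. The paper instead runs a \emph{build-up} argument: it first reduces by induction (and compactness when $r=\infty$) to the thin-slab case $r-q\le\varepsilon$, then well-orders the vertices of $Y^{q<h\le r}$ compatibly with $s$ and shows $Y^{p\le h\le r}$ arises from $Y^{p\le h\le q}$ by coning off descending links one at a time in that order. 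The thin-slab reduction is exactly what makes the paper's key identification $S_{\prec v}\cap\partial\st v=\lk^{(h,s)\downarrow}_{Y^{p\le h}}v$ immediate: a neighbor $w$ of $v$ with smaller refined height either has $h(w)\le h(v)-\varepsilon\le r-\varepsilon\le q$ and so lies in $Y^{p\le h\le q}$, or has $h(w)=h(v)$ and $s(w)<s(v)$, hence $w\prec v$.

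Both routes are standard; yours trades the well-ordering for explicit compression lemma work. One caution: your lexicographic descent on $(m,N_m)$ does terminate, but not for free, and your appeal to \cite{witzel15} for the ``finiteness bookkeeping'' is slightly off target, since that source (reproduced verbatim here) uses the build-up organization, where termination comes from the well order rather than from tracking a descending sequence of refined heights. In your framework the termination argument amounts to the same thin-slab reduction applied after the fact: partition $(q,r]$ into at most $\lceil(r-q)/\varepsilon\rceil$ subintervals of width $\le\varepsilon$; a push at a vertex in the top subinterval introduces new vertices only at the same $h$-value with smaller $s$, or in strictly lower subintervals, so the top subinterval empties in finitely many steps and one descends through finitely many bands.
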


\begin{proof}
 For the sake of keeping things self-contained, we redo the proof from \cite{witzel15}.

 The ``ascending'' version is like the ``descending'' version with $(h,s)$ replaced by $-(h,s)$, so we only prove the descending version. Using induction (and compactness of spheres if $r = \infty$) we can assume that $r-q \le \varepsilon$, where $\varepsilon > 0$ is as in Definition~\ref{def:morse}. By compactness of spheres, it suffices to show that there exists a well order $\preceq$ on the vertices of $Y^{q < h \le r}$ such that the pair
 \[
 (S_{\preceq v},S_{\prec v}) \defeq \left(Y^{p \le h \le q} \cup \bigcup_{w \preceq v} \st^{(h,s)\downarrow}_{Y^{p \le h}} w \text{, } Y^{p \le h \le q} \cup \bigcup_{w \prec v} \st^{(h,s)\downarrow}_{Y^{p \le h}} w\right)
 \]
 is $k$-connected for every vertex $v\in Y^{q < h \le r}$. Let $\preceq$ be any well order satisfying $v\prec v'$ whenever $s(v)<s(v')$ (this exists since $s$ takes finitely many values on vertices). Note that $S_{\preceq v}$ is obtained from $S_{\prec v}$ by coning off $S_{\prec v} \cap \partial \st v$. We claim that this intersection equals the boundary $B$ of $\st^{(h,s)\downarrow} v$ in $Y_{p \le h}^{(h,s)\le (h,s)(v)}$, which is homeomorphic to $\lk^{(h,s)\downarrow}_{Y^{p \le h}} v$ and hence $(k-1)$-connected by assumption. The inclusion $S_{\prec v} \cap \partial \st v \subseteq B$ is evident. Since $S_{\prec v} \cap \partial \st v$ is a full subcomplex of $\partial \st v$, for the converse it suffices to verify that any vertex $w$ adjacent to $v$ with $(h,s)(w) < (h,s)(v)$ lies in $S_{\prec v}$. If $h(w) < h(v)$ then $h(w) \le h(v) - \varepsilon \le r - \varepsilon \le q$, so $w \in Y^{p \le h \le q}$. Otherwise $s(w) < s(v)$ and hence $w \prec v$.
\end{proof}

In practice, the following form is all we will need.

\begin{corollary}\label{cor:morse}
 If $Y$ is $(m-1)$-connected and for every vertex $v \in Y^{h < q}$ the ascending link $\lk^{(h,s)\uparrow}_Y v$ is $(m-1)$-connected, then $Y^{q \le h}$ is $(m-1)$-connected.
\end{corollary}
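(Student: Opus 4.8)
The plan is to deduce this immediately from the ``ascending'' half of the Morse Lemma (Lemma~\ref{lem:morse}). I would apply that lemma to the triple $p=-\infty$, $q=q$, $r=+\infty$, so that $Y^{p\le h\le r}=Y$, $Y^{h\le r}=Y$, $Y^{q\le h\le r}=Y^{q\le h}$, and $Y^{p\le h<q}=Y^{h<q}$. The hypothesis of the corollary says precisely that for every vertex $v\in Y^{h<q}$ the ascending link $\lk^{(h,s)\uparrow}_{Y^{h\le r}}v=\lk^{(h,s)\uparrow}_Y v$ is $(m-1)$-connected, which is exactly the input the ascending statement of Lemma~\ref{lem:morse} requires with $k=m$. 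Hence that lemma yields that the pair $(Y,Y^{q\le h})$ is $m$-connected.

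Next I would feed this into the long exact sequence of the pair $(Y,Y^{q\le h})$: $m$-connectedness of the pair means $\pi_j(Y,Y^{q\le h})=0$ for all $j\le m$, so the inclusion-induced map $\pi_j(Y^{q\le h})\to\pi_j(Y)$ is an isomorphism for $j\le m-1$ and a surjection for $j=m$. Since $Y$ is $(m-1)$-connected, $\pi_j(Y)=0$ for $j\le m-1$, and therefore $\pi_j(Y^{q\le h})=0$ for $j\le m-1$ as well; reading off the low-degree cases ($j=0$ gives nonemptiness and path-connectedness, $j=1$ gives simple-connectedness when $m\ge 2$, and so on) this is precisely the assertion that $Y^{q\le h}$ is $(m-1)$-connected.

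The only points needing a little care are the bookkeeping conventions for the symbols $\pm\infty$ (already permitted in the statement of Lemma~\ref{lem:morse}) and the degenerate case in which $Y^{h<q}$ is empty; in that case the hypothesis on ascending links is vacuous, but then every vertex of $Y$ has height $\ge q$, so $Y^{q\le h}=Y$ and the conclusion holds trivially. I do not anticipate any genuine obstacle here: this is a direct corollary, with essentially all the content residing in Lemma~\ref{lem:morse}, and the proof amounts to the choice $p=-\infty$, $r=+\infty$ followed by unwinding the long exact sequence of the pair.
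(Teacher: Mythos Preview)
Your proposal is correct and follows exactly the paper's approach: the paper's proof is the single line ``This follows from the Morse Lemma using $p=-\infty$ and $r=\infty$,'' and you have simply unpacked that sentence, including the long exact sequence step that the paper leaves implicit.
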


\begin{proof}
 This follows from the Morse Lemma using $p=-\infty$ and $r=\infty$.
\end{proof}


\section{The groups and characters}\label{sec:groups_and_chars}

Thompson's group $F$ admits many generalizations. In this paper we will be concerned with a family of groups usually denoted $F_{n,\infty}$, which we abbreviate to $F_n$ ($2\le n\in\N$); the group $F_2$ is $F$. As a warning, when dealing with generalizations of Thompson groups, e.g., in \cite{brown87,brin98}, the notation $F_n$ often refers to a different group, in which $F_{n,\infty}$ sits with finite index (not to mention that $F_n$ also often denotes the free group of rank $n$). We will not be concerned with these though, so here \textbf{the notation $F_n$ will always refer to the group denoted $F_{n,\infty}$ in \cite{brown87,brin98,kochloukova12}}. In this section we give three viewpoints of $F_n$ and its characters. The three viewpoints of $F_n$ are: its standard infinite presentation, as a group of homeomorphisms of $[0,1]$, and as a group of $n$-ary tree pairs. The equivalence of these was proved in the original paper by Brown \cite[Section~4]{brown87}. For all three ways of viewing $F_n$, we also discuss characters of $F_n$ from that viewpoint. The last one will be the most important, since it is the one we use later to compute the $\Sigma^m(F_n)$.

\subsection{Presentation}\label{sec:presentation}

The standard infinite presentation for $F_n$ (\cite[Proposition~4.8]{brown87}) is
$$F_n\cong\gen{x_i~(i\in\N_0)\mid x_j x_i = x_i x_{j+(n-1)} \text{ for all }i<j}\text{.}$$
It is easy to abelianize this presentation, and get that $F_n/[F_n,F_n] \cong \Z^n$. One basis for this is $\bar{x}_0,\dots,\bar{x}_{n-1}$. From this, one could get a basis for $\Hom(F_n,\R) \cong \R^n$ by taking the dual basis. This was one tool used in \cite{kochloukova12} to compute $\Sigma^2(F_n)$.

\subsection{Piecewise linear homeomorphisms}\label{sec:homeos}

A more hands-on basis for $\Hom(F_n,\R)$ can be described by viewing $F_n$ as piecewise linear self homeomorphisms of $[0,1]$. We will not prove anything in this subsection, since the model for $F_n$ we will actually use comes in the next subsection; here we are just giving some intuition for $F_n$ and its characters. Each element $f\in F_n$ is an orientation preserving homeomorphism $f\colon[0,1] \to [0,1]$ that is piecewise linear with slopes powers of $n$, and whose finitely many points of non-differentiability lie in $\Z[1/n]$. Already this gives us two interesting characters, usually denoted $\chi_0$ and $\chi_1$. The character $\chi_0$ is the log base $n$ of the right derivative at $0$, and $\chi_1$ is the log base $n$ of the left derivative at $1$.

Any such $f\in F_n$ is determined by certain sets of \emph{breakpoints} in the domain and range, as we now describe. Build a finite set $P \subseteq [0,1]$ by starting with the points $\{0,1\}$, and then do finitely many iterations of the following procedure:
\begin{quote}
 Pick two points $x$ and $x'$ already in $P$, with no points in between them yet in $P$, and then add to $P$ the $n-1$ new points $\frac{(n-i)x+ix'}{n}$ for $0<i<n$.
\end{quote}
For example, after one iteration of this, $P$ consists of $\{0,1/n,2/n,\dots,(n-1)/n,1\}$. Call $P$ a \emph{legal set of breakpoints}. If $Q$ is another legal set of breakpoints with $|P|=|Q|$, then we can define $f \colon [0,1] \to [0,1]$ by sending the points of $P$, in order, to the points of $Q$, and then extending affinely between breakpoints. By construction, slopes will be powers of $n$ and breakpoints will lie in $\Z[1/n]$. Moreover, every $f\in F_n$ arises in this way \cite[Proposition~4.4]{brown87}.

One can show that every element of $\Z[1/n] \cap [0,1]$ appears in some legal set of breakpoints. Moreover, while a point can appear in more than one legal set of breakpoints, and have a different ``position'' in different legal sets of breakpoints, the ``position modulo $n-1$'' is a well defined measurement. The equivalence classes induced by this measurement are in fact the $F_n$-orbits in $\Z[1/n] \cap (0,1)$. (Again, proofs are left to the reader.) For each $0\le i\le n-2$, let $O_i$ denote the $F_n$-orbit of points of $\Z[1/n] \cap (0,1)$ appearing in a legal set of breakpoints in a position congruent to $i$ modulo $n-1$.

Now we can define characters on $F_n$. For a point $x\in(0,1]$ define $LD|_x \colon F_n \to \Z$ to be the log base $n$ of the left derivative at $x$. Similarly for $x\in[0,1)$ let $RD|_x$ be the log base $n$ of the right derivative at $x$. These are not group homomorphisms. However, summing these over a complete $F_n$-orbit $O_i$ would define a homomorphism. To get these sums to be finite, we will actually sum up $LD|_x - RD|_x$, since then for a given $f$ this can be nonzero at only finitely many points. For $0\le i\le n-2$ define:
$$\psi_i(f) \defeq \sum\limits_{x\in O_i} LD|_x (f) - RD|_x (f) \text{.}$$
This is a group homomorphism $\psi_i \colon F_n \to \Z$. As a remark, the characters $-\chi_0$ and $\chi_1$ are also of this form, namely for $-\chi_0$ we sum over the orbit of $0$ (which is just $\{0\}$) and for $\chi_1$ we sum over the orbit $\{1\}$. (Technically this only makes sense if we declare $LD|_0=0$ and $RD|_1=0$.)

Note that $\sum_{i=0}^{n-2} \psi_i = \chi_0 - \chi_1$. However, one can check that $\chi_0,\psi_0,\dots,\psi_{n-3},\chi_1$ are linearly independent, and so form a basis of $\Hom(F_n,\R)\cong \R^n$. In the next subsection we will redefine the $\psi_i$ using a different model for $F_n$, and in particular will prove all of these facts.

\subsection{$n$-ary trees}\label{sec:trees}

This brings us to the descriptions of the $F_n$ and their characters that we will use for the rest of the paper, namely making use of $n$-ary trees.

An \emph{$n$-ary tree} will always mean a finite connected tree with a single vertex of degree $n$ or $0$, its \emph{root}, some number of degree $1$ vertices, the \emph{leaves}, and all other vertices of degree $n+1$. The \emph{trivial tree} $\elnothing$ is the one where the root has degree $0$ (so there are no leaves or other vertices). The \emph{$n$-caret} $\elsplit_n$ is the non-trivial $n$-ary tree in which every vertex is either the root or a leaf. Every $n$-ary tree can be obtained as a union of $n$-carets.

For an $n$-ary tree $T$, each leaf of $T$ has a unique reduced path to the root. The length of this path (i.e., its number of edges) defines the \emph{depth} of that leaf. As a remark, the trivial tree is characterized as having a leaf of depth $0$, and the $n$-caret is characterized as having all its leaves of depth $1$.

For each $n$-ary tree $T$, say with $r$ leaves, we fix a planar embedding of $T$, and hence an order on the leaves. We label the leaves $0$ through $r-1$, left to right. The next definition is of various measurements that we will call \emph{proto-characters} on $T$, which will later be used to define characters on elements of $F_n$.

\begin{definition}[Proto-characters]\label{def:proto_chars}
 Let $T$ be an $n$-ary tree with leaves labeled $0$ through $r-1$, left to right. Define $L(T)$ to be the depth of the $0$th leaf. Define $R(T)$ to be the depth of the $(r-1)$st leaf. For each $0\le j\le r-1$, define $d_j(T)$ to be the depth of the $j$th leaf, and then for each $0\le j\le r-2$ define
 $$\delta_j(T) \defeq d_j(T) - d_{j+1}(T) \text{.}$$
 This is the \emph{$j$th change of depth} of $T$. For each $0\le i\le n-2$ define
 $$D_i(T) \defeq \sum \{\delta_j(T) \mid 0\le j \le r-2 \text{, } j\equiv i \mod (n-1)\} \text{.}$$
\end{definition}

As a quick (and trivial) example, if $\elsplit_n$ is the $n$-caret then $L(\elsplit_n)=R(\elsplit_n)=1$, and $D_i(\elsplit_n)=0$ for all $i$, since $d_j(\elsplit_n)=1$ for all $j$. A less trivial example is given in Figure~\ref{fig:proto}.

\begin{figure}[htb]
 \begin{tikzpicture}\centering
  \draw (0,0) -- (1,1) -- (2,0)   (1,1) -- (1,0)   (-1,-1) -- (0,0) -- (1,-1)   (0,0) -- (0,-1)   (-1,-2) -- (0,-1) -- (1,-2)   (0,-1) -- (0,-2);
  \filldraw (-1,-1) circle (1.5pt)   (0,-2) circle (1.5pt)   (1,-1) circle (1.5pt)   (2,0) circle (1.5pt);
  \filldraw[white] (-1,-2) circle (1.5pt)   (1,-2) circle (1.5pt)   (1,0) circle (1.5pt);
  \draw (-1,-2) circle (1.5pt)   (1,-2) circle (1.5pt)   (1,0) circle (1.5pt);
 \end{tikzpicture}
 \caption{A $3$-ary tree $T$ with $r=7$. Leaves $0$, $2$, $4$ and $6$ are labeled with a black dot (those congruent to $0$ mod $2$), and leaves $1$, $3$ and $5$ with a white dot (congruent to $1$ mod $2$). Visibly, $L(T)=2$ and $R(T)=1$. To compute $D_0$, we add $\delta_0+\delta_2+\delta_4$ and get $D_0(T)=-1+0+1=0$. To compute $D_1$ we add $\delta_1+\delta_3+\delta_5$ and get $D_1(T)=0+1+0=1$.}\label{fig:proto}
\end{figure}
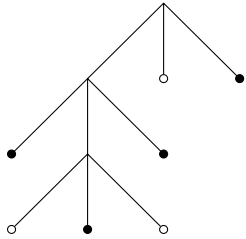

An \emph{$n$-ary tree pair} $(T_-,T_+)$ consists of $n$-ary trees $T_-$ and $T_+$ such that $T_-$ and $T_+$ have the same number of leaves. Two $n$-ary tree pairs are \emph{equivalent} if they can be transformed into each other via a sequence of reductions and expansions. An \emph{expansion} amounts to adding an $n$-caret to the $k$th leaf of $T_-$ and one to the $k$th leaf of $T_+$, for some $k$. A \emph{reduction} is the reverse of an expansion. We denote the equivalence class of $(T_-,T_+)$ by $[T_-,T_+]$.

These $[T_-,T_+]$ are the elements of $F_n$. The multiplication, say of $[T_-,T_+]$ and $[U_-,U_+]$, written $[T_-,T_+] \cdot [U_-,U_+]$, is defined as follows. First note that $T_+$ and $U_-$ admit an $n$-ary tree $S$ that contains them both, so using expansions we have $[T_-,T_+] = [\hat{T}_-,S]$ and $[U_-,U_+] = [S,\hat{U}_+]$ for some $\hat{T}_-$ and $\hat{U}_+$. Then we define
$$[T_-,T_+] \cdot [U_-,U_+] \defeq [\hat{T}_-,S] \cdot [S,\hat{U}_+] = [\hat{T}_-,\hat{U}_+] \text{.}$$
This multiplication is well defined, and it turns out the resulting structure is a group, namely $F_n$.

Having described elements of $F_n$ using the $n$-ary tree pair model, we now describe characters. We make use of the proto-characters from Definition~\ref{def:proto_chars}.

\begin{definition}[Characters]\label{def:chars}
 Let $f=[T,U]=(T,U)\in F_n$. Define
 $$\chi_0(f) \defeq L(U) - L(T) \text{ and } \chi_1(f) \defeq R(U) - R(T) \text{.}$$
 For $0\le i\le n-2$ define
 $$\psi_i(f) \defeq D_i(U) - D_i(T) \text{.}$$
\end{definition}

\begin{lemma}\label{lem:chars_well_def}
 The functions $\chi_0$, $\chi_1$ and $\psi_i$ ($0\le i\le n-2$) are well defined group homomorphisms from $F_n$ to $\Z$.
\end{lemma}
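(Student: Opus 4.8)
The plan is to verify two things for each of the functions $\chi_0$, $\chi_1$, $\psi_i$: first, that the definition does not depend on the choice of representative $(T,U)$ for the element $f\in F_n$, and second, that the resulting function is additive with respect to the group multiplication. Since every equivalence of tree pairs is a composite of expansions and reductions, and a reduction is the inverse of an expansion, well-definedness reduces to a single claim: if $(T',U')$ is obtained from $(T,U)$ by one expansion (adding an $n$-caret to the $k$th leaf of both $T$ and $U$), then the proto-character values agree, i.e.\ $L(U')-L(T')=L(U)-L(T)$ and similarly for $R$ and $D_i$. I would isolate this as the core computation.

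The key step is therefore to understand how the proto-characters $L$, $R$, $D_i$ change under attaching an $n$-caret at the $k$th leaf of an $n$-ary tree $T$ with $r$ leaves, producing a tree $\widehat{T}$ with $r+n-1$ leaves. The leaf depths change as follows: leaves $0,\dots,k-1$ keep their depths; the old $k$th leaf of depth $d_k$ is replaced by $n$ new leaves (at positions $k,\dots,k+n-1$), each of depth $d_k+1$; and old leaves $k+1,\dots,r-1$ (now at positions $k+n,\dots,r+n-2$) keep their depths. For $L$: unless $k=0$, $L(\widehat T)=L(T)$; if $k=0$ then $L(\widehat T)=L(T)+1$. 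For $R$: unless $k=r-1$, $R(\widehat T)=R(T)$; if $k=r-1$ then $R(\widehat T)=R(T)+1$. Since in an expansion we attach carets at the \emph{same} position $k$ in both $T$ and $U$, and $T,U$ have the same number of leaves, the increments cancel in the differences $L(U)-L(T)$ and $R(U)-R(T)$. For the $D_i$: the changes of depth $\delta_j$ for $j<k-1$ are unchanged; a block of $n-1$ new zero changes-of-depth is inserted (between the new leaves of common depth $d_k+1$); the single old change $\delta_{k-1}=d_{k-1}-d_k$ becomes $d_{k-1}-(d_k+1)$ and $\delta_k=d_k-d_{k+1}$ becomes $(d_k+1)-d_{k+1}$, while indices $\ge k+1$ get shifted by $n-1$. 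The net effect on the total $\sum_j\delta_j$ is zero, and more carefully the $n-1$ inserted zeros plus the index shift by $n-1$ mean that each residue class mod $n-1$ is affected in a way that $D_i(\widehat T)-D_i(T)$ depends only on $k \bmod (n-1)$, not on $T$; one checks the $-1$ at position $k-1$ and the $+1$ at position $k$ contribute to residues $(k-1)$ and $k$ mod $n-1$ respectively, and again this is identical for $T$ and $U$, so it cancels in $\psi_i(f)=D_i(U)-D_i(T)$. This bookkeeping is the main obstacle — not conceptually deep, but one must be careful with the index shifts modulo $n-1$.

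Once well-definedness is established, homomorphism is straightforward: given $f=[\,\widehat T_-,S\,]$ and $g=[\,S,\widehat U_+\,]$ with common middle tree $S$, so that $fg=[\,\widehat T_-,\widehat U_+\,]$, each of $\chi_0,\chi_1,\psi_i$ has the form $P(\text{right tree})-P(\text{left tree})$ for the relevant proto-character $P\in\{L,R,D_i\}$. Hence
\[
\chi_0(fg)=L(\widehat U_+)-L(\widehat T_-)=\bigl(L(\widehat U_+)-L(S)\bigr)+\bigl(L(S)-L(\widehat T_-)\bigr)=\chi_0(g)+\chi_0(f),
\]
and identically for $\chi_1$ and $\psi_i$ (the telescoping is purely formal, using that these are defined on \emph{some} representative and we have just shown the value is representative-independent). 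That the codomain is $\Z$ rather than $\R$ is immediate since all proto-characters are integer-valued. This completes the proof modulo the caret-attachment computation described above, which I would present as a short explicit lemma or inline calculation.
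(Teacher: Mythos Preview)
Your proof is correct and follows the same approach as the paper: show that $A(T')-A(T)$ for $A\in\{L,R,D_i\}$ depends only on $k$ and the shared leaf count $r$ (not on the particular tree), then telescope for the homomorphism property. One small caveat: your claim that $D_i(\widehat T)-D_i(T)$ depends only on $k\bmod(n-1)$ tacitly assumes $0<k<r-1$; at the boundaries $k=0$ or $k=r-1$ one of the $\pm1$ contributions is absent (there is no $\delta_{k-1}$ when $k=0$, and no $\delta_k$ when $k=r-1$), so the dependence is really on $k$ and $r$ together, as the paper makes explicit --- but since $T$ and $U$ share both, your conclusion is unaffected.
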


\begin{proof}
 For well definedness, we need to show that for $\chi\in\{\chi_0,\chi_1,\psi_i\}_{i=0}^{n-2}$, if $T'$ (respectively $U'$) is obtained from $T$ (respectively $U$) by adding an $n$-caret to the $k$th leaf, then $\chi(T',U')=\chi(T,U)$. If suffices to show that for $A\in \{L,R,D_i\}_{i=0}^{n-2}$, the value $A(T')-A(T)$ depends only on $i$, $k$ and $r$, where $r$ is the number of leaves of $T$. Since $U$ has the same number of leaves, this will show that $A(T')-A(T) = A(U')-A(U)$, and so $A(U')-A(T') = A(U)-A(T)$ and $\chi(T',U')=\chi(T,U)$. For $A=L,R$ this is clear: $L(T')-L(T)=1$ if $k=0$ and $L(T')-L(T)=0$ otherwise, and $R(T')-R(T)=1$ if $k=r-1$ and $R(T')-R(T)=0$ otherwise. Now let $A=D_i$. We then have the following:
 \begin{enumerate}
  \item If $0<k$ and $k-1\equiv_{n-1} i$, then $D_i(T')=D_i(T)-1$.
  \item If $k<r-1$ and $k\equiv_{n-1} i$, then $D_i(T')=D_i(T)+1$.
  \item Otherwise $D_i(T')-D_i(T) = 0$.
 \end{enumerate}
 In particular, $D_i(T')-D_i(T)$ depends only on $i$, $k$ and $r$.

 It is now easy to check that the $\chi$ are group homomorphisms. If we have two elements to multiply, represent them with a common tree and get $[T,U]\cdot [U,V] = [T,V]$; then for $A\in\{L,R,D_i\}$ we have $A(U) - A(T) + A(V) - A(U) = A(V) - A(T)$, so any $\chi\in\{\chi_0,\chi_1,\psi_i\}$ is a homomorphism.
\end{proof}

As the proof showed, we now know how the measurements $L$, $R$ and $D_i$ change when an $n$-caret is added to the $k$th leaf of an $n$-ary tree. For example if $0<k$ and $k-1\equiv_{n-1} i$ then $D_i$ goes down by $-1$, and if $k<r-1$ and $k\equiv_{n-1} i$ then $D_i$ goes up by $1$. See Figure~\ref{fig:proto_change} for an example.

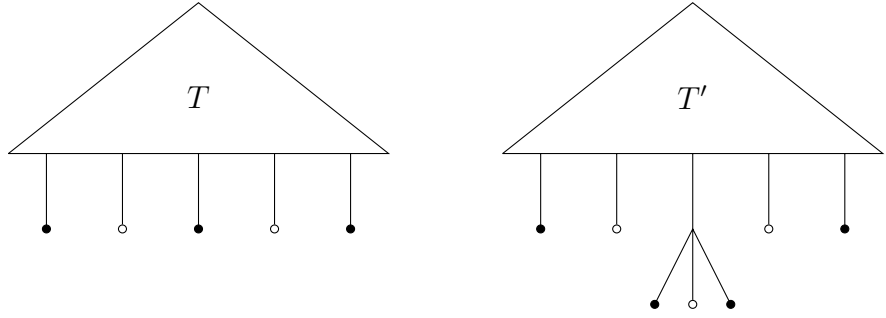
\begin{figure}[htb]
 \begin{tikzpicture}\centering
  \draw (-0.5,0) -- (4.5,0) -- (2,2) -- (-0.5,0)   (0,0) -- (0,-1)   (1,0) -- (1,-1)   (2,0) -- (2,-1)   (3,0) -- (3,-1)   (4,0) -- (4,-1);
  \filldraw (0,-1) circle (1.5pt)   (2,-1) circle (1.5pt)   (4,-1) circle (1.5pt);
  \filldraw[white] (1,-1) circle (1.5pt)   (3,-1) circle (1.5pt);
  \draw (1,-1) circle (1.5pt)   (3,-1) circle (1.5pt);
  \node at (2,0.75) {$T$};

  \begin{scope}[xshift=6.5cm]
   \draw (-0.5,0) -- (4.5,0) -- (2,2) -- (-0.5,0)   (0,0) -- (0,-1)   (1,0) -- (1,-1)   (2,0) -- (2,-1)   (3,0) -- (3,-1)   (4,0) -- (4,-1);
   \filldraw (0,-1) circle (1.5pt)   (4,-1) circle (1.5pt);
   \filldraw[white] (1,-1) circle (1.5pt)   (3,-1) circle (1.5pt);
   \draw (1,-1) circle (1.5pt)   (3,-1) circle (1.5pt);
   \draw (1.5,-2) -- (2,-1) -- (2.5,-2)   (2,-1) -- (2,-2);
   \filldraw (1.5,-2) circle (1.5pt)   (2.5,-2) circle (1.5pt);
   \filldraw[white] (2,-2) circle (1.5pt);
   \draw (2,-2) circle (1.5pt);
   \node at (2,0.75) {$T'$};
  \end{scope}
 \end{tikzpicture}
 \caption{A $3$-caret is added to the second leaf (counting starts at zero) of some $3$-ary tree $T$ with five leaves, to get a new $3$-ary tree $T'$, so $k=2$ and $r=5$. For $T$ we have some changes of depth $\delta_0,\dots,\delta_3$, and for $T'$ we have some changes of depth $\delta_0',\dots,\delta_5'$. The relationships are $\delta_0'=\delta_0$, $\delta_1'=\delta_1-1$, $\delta_2'=0$, $\delta_3'=0$, $\delta_4'=\delta_2+1$ and $\delta_5'=\delta_3$. Hence $D_0'=D_0+1$ and $D_1'=D_1-1$.}\label{fig:proto_change}
\end{figure}

\begin{proposition}[Basis]\label{prop:char_basis}
 As elements of $\Hom(F_n,\R)\cong \R^n$, the $n$ characters
 $$\chi_0,\psi_0,\dots,\psi_{n-3},\chi_1$$
 are linearly independent, and hence form a basis. A dependence involving $\psi_{n-2}$ is that $\psi_0+\cdots+\psi_{n-2}=\chi_0 - \chi_1$.
\end{proposition}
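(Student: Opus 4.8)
The plan is to dispatch the dependence relation with a one-line telescoping computation, and then reduce linear independence to a small explicit linear-algebra check on a carefully chosen family of $n$-ary tree pairs.

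For the relation, fix an $n$-ary tree $T$ with $r$ leaves. The residues $0,\dots,n-2$ modulo $n-1$ partition $\{0,1,\dots,r-2\}$, so summing the defining formula for $D_i(T)$ over $i$ just reassembles the full sum of changes of depth, which telescopes:
\[
\sum_{i=0}^{n-2}D_i(T)\;=\;\sum_{j=0}^{r-2}\delta_j(T)\;=\;\sum_{j=0}^{r-2}\bigl(d_j(T)-d_{j+1}(T)\bigr)\;=\;d_0(T)-d_{r-1}(T)\;=\;L(T)-R(T)\text{.}
\]
Applying this to both trees of $f=[T,U]$ and subtracting yields $\sum_{i=0}^{n-2}\psi_i(f)=\bigl(L(U)-R(U)\bigr)-\bigl(L(T)-R(T)\bigr)=\chi_0(f)-\chi_1(f)$, the asserted dependence.

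For linear independence, note first that $F_n/[F_n,F_n]\cong\Z^n$ gives $\dim_\R\Hom(F_n,\R)=n$, so it suffices to find $n$ elements $g_1,\dots,g_n\in F_n$ for which the $n\times n$ matrix $\bigl(\chi(g_j)\bigr)$, with $\chi$ ranging over $\chi_0,\psi_0,\dots,\psi_{n-3},\chi_1$, is invertible. I would take the $g_j$ to be ``differences'' of single-caret modifications of a fixed base tree: if $S$ is an $n$-ary tree with $r$ leaves and $S_k$ denotes $S$ with an $n$-caret adjoined at its $k$th leaf, then $[S_k,S_\ell]\in F_n$, and by the explicit rules recorded in the proof of Lemma~\ref{lem:chars_well_def} the vector of values of $(\chi_0,\psi_0,\dots,\psi_{n-2},\chi_1)$ on $[S_k,S_\ell]$ equals $c_\ell-c_k$, where $c_k\in\Z^{n+1}$ records the effect on $(L,D_0,\dots,D_{n-2},R)$ of adjoining a caret at the $k$th leaf. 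Writing $e_L,e_{D_0},\dots,e_{D_{n-2}},e_R$ for the corresponding standard basis of $\Z^{n+1}$, these change vectors are $c_0=e_L+e_{D_0}$, $c_{r-1}=e_R-e_{D_{n-2}}$ (using that $r\equiv 1\pmod{n-1}$, which holds for every $n$-ary tree), and $c_k=e_{D_{k\bmod(n-1)}}-e_{D_{(k-1)\bmod(n-1)}}$ for an interior leaf $0<k<r-1$; in particular an interior $c_k$ depends only on $k\bmod(n-1)$. All of them satisfy the relation just proved, so their differences land in the $n$-dimensional hyperplane of $\R^{n+1}$ it cuts out, and the point is to choose $S$ and the leaves so that these differences span that hyperplane.

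The one genuine subtlety — and the main obstacle — is that one must \emph{not} take $S=\elsplit_n$ itself: with only $n$ leaves, leaf $n-1$ is already the last leaf, so the only available change vectors are $c_0$, the interior vectors $c_1,\dots,c_{n-2}$, and $c_{r-1}$, and these span merely an $(n-1)$-dimensional space of differences — one dimension short. Taking $S$ with more than $n$ leaves (e.g.\ $\elsplit_n$ with one further $n$-caret adjoined, so $r=2n-1$) makes $k=n-1$ an \emph{interior} leaf, so the additional change vector $c_{n-1}=e_{D_0}-e_{D_{n-2}}$ is now available; since it is not an affine combination of the previously available ones (its expansion in those vectors has coefficient sum $2-n\ne 1$), it supplies the missing direction. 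Concretely I would take $g_i=[S_i,S_0]$ for $1\le i\le n-2$ together with $g_{n-1}=[S_{n-1},S_0]$ and $g_n=[S_{r-1},S_0]$; writing out the resulting matrix of character values and performing the obvious row and column operations collapse its determinant to $\pm(n-1)$, which is nonzero since $n\ge 2$. (One could instead evaluate the characters on the standard generators $x_0,\dots,x_{n-1}$, whose classes form a basis of $F_n^{\mathrm{ab}}$, using tree-pair representatives of the $x_i$; this works equally well but carries more bookkeeping.) Finally, with $\chi_0,\psi_0,\dots,\psi_{n-3},\chi_1$ established as a basis, the dependence proved above exhibits $\psi_{n-2}$ inside their span.
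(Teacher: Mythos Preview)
Your proof is correct and follows essentially the same route as the paper's: both arguments dispatch the dependence by telescoping $\sum_i D_i(T)=L(T)-R(T)$, then evaluate the $n$ characters on $n$ explicit tree-pair elements and show the resulting $n\times n$ matrix has determinant $\pm(n-1)$. The only cosmetic difference is in how the ``one dimension short'' obstruction is resolved: the paper keeps the base caret $\elsplit_n$ and introduces a separate family $T_k'$ (with an extra caret prepended) to supply the missing element $[T_0',T_{n-1}']$, whereas you enlarge the base tree $S$ from the outset so that leaf $n-1$ becomes interior and the extra change vector $e_{D_0}-e_{D_{n-2}}$ is already available. Your change-vector framing makes the reason for the obstruction more transparent, but the two computations are really the same.
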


\begin{proof}
 For the second statement, just note that for any tree $T$, $D_0(T)+\cdots D_{n-2}(T) = L(T) - R(T)$.

 We turn to the statement about linear independence. For $0\le k\le n-1$, let $T_k$ be the tree consisting of an $n$-caret with another $n$-caret on its $k$th leaf, so $T_k$ has leaves labeled $0$ through $2n-2$. It is straightforward to compute $L(T_0)=2$, $L(T_k)=1$ for $k>0$, $R(T_{n-1})=2$, $R(T_k)=1$ for $k<n-1$, and the following for the $D_i$ ($0\le i\le n-2$):
 \begin{align*}
  D_i(T_k) = \left\{\begin{array}{ll} -1 & \text{if } i = k-1 \\
                                       1 & \text{if } i = k \\
                                       0 & \text{else.}
                    \end{array}\right.
 \end{align*}
 For $0\le i\le n-2$, we therefore have
 $$(D_i(T_0),D_i(T_1),\dots,D_i(T_{n-1})) = (0,\dots,0,1,-1,0,\dots,0)$$
 with the $1$ at $D_i(T_i)$. We will also need to use trees $T_k'$, obtained by attaching the root of $T_k$ to the last leaf of an $n$-caret. For each $k$ we have $L(T_k')=1$, $R(T_k')=R(T_k)+1$ and $D_i(T_k')=D_i(T_k)$ for all $0\le i\le n-3$.

 Consider the $n$ elements $[T_0,T_{n-1}],\dots,[T_{n-2},T_{n-1}],[T_0',T_{n-1}']$ of $F_n$. Our goal now is to hit them with the $n$ characters $\chi_0,\psi_0,\dots,\psi_{n-3},\chi_1$ to get an $n$-by-$n$ matrix, and then show that this matrix is non-singular. In particular this will prove that these $n$ characters are linearly independent. The $\chi_0$ row is $(-1,0,\dots,0)$ and the $\chi_1$ row is $(1,\dots,1)$. For $0\le i\le n-3$, $D_i(T_{n-1})=0$, so $\psi_i([T_k,T_{n-1}])=-D_i(T_k)$ for $0\le k\le n-2$, and similarly $\psi_i([T_0',T_{n-1}'])=-D_i(T_0')$. Hence we can compute the rows for $\psi_i$, using our previous computation of the $D_i(T_k)$. We get that the $\psi_0$ row is $(-1,1,0,\dots,0,-1)$, the $\psi_1$ row is $(0,-1,1,0,\dots,0)$, and so forth up to the $\psi_{n-3}$ row, which is $(0,\dots,0,-1,1,0)$. Arranging these rows into a matrix, we need to show non-singularity of the matrix:
 \begin{align*}
  \begin{pmatrix}
   -1 &  0 &  0 &  0 & \dots &  0 & 0 & 0 \\
   -1 &  1 &  0 &  0 & \dots &  0 & 0 & -1\\
    0 & -1 &  1 &  0 & \dots &  0 & 0 & 0 \\
    0 &  0 & -1 &  1 & \dots &  0 & 0 & 0 \\
    0 &  0 &  0 & -1 & \dots &  0 & 0 & 0 \\
   \vdots & \vdots & \vdots & \vdots & \ddots & \vdots & \vdots & \vdots \\
    0 &  0 &  0 &  0 & \dots &  1 & 0 & 0 \\
    0 &  0 &  0 &  0 & \dots & -1 & 1 & 0 \\
    1 &  1 &  1 &  1 & \dots &  1 & 1 & 1
  \end{pmatrix}
 \end{align*}
 This is visibly ``almost'' lower triangular; the second row (the $\psi_0$ row) is the only problem, if $n>2$ (note that if $n=2$ then the only rows are $\chi_0$ and $\chi_1$, and this matrix is lower triangular and non-singular). We hit this row with elementary row operations, namely if $r_i$ is the $i$th row we replace $r_2$ with
 $$r_2 + r_n - r_{n-1} - 2r_{n-2} - 3r_{n-3} - \cdots - (n-3)r_3 \text{.}$$
 The new second row is $(0,n-1,0,\dots,0)$, and hence the matrix reduces to a lower triangular matrix whose determinant is readily computed to be $-(n-1)$. This matrix is therefore non-singular, and so the characters $\chi_0,\psi_0,\dots,\psi_{n-3},\chi_1$ are linearly independent elements of $\Hom(F_n,\R)\cong \R^n$.
\end{proof}

Clearly this proof would have been faster if, instead of $\psi_0$, we used the character $\psi_0 + \chi_1 - \psi_{n-3} - 2\psi_{n-4} - 3\psi_{n-5} - \cdots - (n-3)\psi_1$, but since computing the $\Sigma^m(F_n)$ will involve being able to tell whether our basis characters increase, decrease, or neither under certain moves, it will be advantageous to have basis characters with the easiest possible descriptions.

\begin{remark}
 The $\psi_i$ here agree with the $\psi_i$ in Subsection~\ref{sec:homeos}, provided the connection between the homeomorphism model and the $n$-ary tree pair model is made correctly. For $(T,U)$ we view $U$ as the ``domain tree'' and $T$ as the ``range tree''. Each tree defines a subdivision of $[0,1]$ into as many subintervals as there are leaves. Then, the subdivision given by the domain tree is taken to the subdivision given by the range tree, defining a homeomorphism as described in Subsection~\ref{sec:homeos}. It is straightforward to check that the two definitions of $\psi_i$ agree.
\end{remark}


\section{Stein--Farley complexes}\label{sec:stein_farley}

In this section we recall the Stein--Farley $\CAT(0)$ cube complex $X_n$ on which $F_n$ acts, and extend the characters $\chi \colon F_n \to \R$ to functions $\chi \colon X_n \to \R$. The complex $X_n$ was first constructed by Stein \cite{stein92} building off ideas of Brown, and shown to be $\CAT(0)$ by Farley \cite{farley03}, who viewed $F_n$ as a \emph{diagram group}, \`a la Guba and Sapir \cite{guba97}. To define $X_n$, we first expand from considering $n$-ary trees to considering $n$-ary forests. An \emph{$n$-ary forest} is a disjoint union of finitely many $n$-ary trees. The roots and leaves of the trees are \emph{roots} and \emph{leaves} of the forest. We fix an order on the trees, and hence on the leaves. An \emph{$n$-ary forest pair} $(E_-,E_+)$ consists of $n$-ary forests $E_-$ and $E_+$ such that $E_-$ and $E_+$ have the same number of leaves. We call the roots of $E_-$ \emph{heads} and the roots of $E_+$ \emph{feet} of the pair (the terminology comes from flipping $E_+$ upside down and identifying the leaves of $E_-$ and $E_+$).

Just like the tree case, we have a notion of equivalence. Two $n$-ary forest pairs are \emph{equivalent} if they can be transformed into each other via a sequence of reductions or expansions. We denote the equivalence class of $(E_-,E_+)$ by $[E_-,E_+]$. Let $\calP$ be the set of equivalence classes of $n$-ary forest pairs.

This set has two important pieces of structure. First, it is a groupoid. If $[E_-,E_+]$ has $k$ heads and $\ell$ feet, and $[D_-,D_+]$ has $\ell$ heads and $m$ feet, then we can define their product, written $[E_-,E_+] \cdot [D_-,D_+]$, which is an $n$-ary forest pair with $k$ heads and $m$ feet. Like in $F_n$, with $n$-ary tree pairs, to define the product we first note that $E_+$ and $D_-$ admit an $n$-ary forest $C$ that contains them both. Then applying expansions we can write $[E_-,E_+] = [\hat{E}_-,C]$ and $[D_-,D_+] = [C,\hat{D}_+]$ for some $\hat{E}_-$ and $\hat{D}_+$, and then define
$$[E_-,E_+] \cdot [D_-,D_+] \defeq [\hat{E}_-,C] \cdot [C,\hat{D}_+] = [\hat{E}_-,\hat{D}_+] \text{.}$$
For $\calP$ to be a groupoid with this multiplication, we need identities and inverses. A forest in which all trees are trivial is called a \emph{trivial forest}. The trivial forest with $\ell$ trees is denoted $\id_\ell$. We can view an $n$-ary forest $E$ as an $n$-ary forest pair via $E \mapsto [E,\id_\ell]$, where $\ell$ is the number of leaves of $E$. It is clear that for any element with $k$ heads and $\ell$ feet, $[\id_k,\id_k]$ is the left identity and $[\id_\ell,\id_\ell]$ is the right identity. We also have inverses, namely the (left and right) inverse of $[E_-,E_+]$ is $[E_+,E_-]$.

Since $F_n$ lives in $\calP$ as the set of elements with one head and one foot, we have an action of $F_n$, by multiplication, on the subset $\calP_1$ of elements with one head.

The second piece of structure on $\calP$ is an order relation. The order is defined by: $[E_-,E_+] \le [D_-,D_+]$ whenever there is an $n$-ary forest $C$ such that $[E_-,E_+] \cdot C = [D_-,D_+]$. We informally refer to right multiplication by an $n$-ary forest pair of the form $[C,\id_\ell]$ as \emph{splitting} the feet of $[E_-,E_+]$. Multiplying by $[\id_\ell,C]$ is called \emph{merging}. This terminology comes from viewing $E_+$ upside down with its leaves attached to those of $E_-$, forming a ``strand diagram'' \`a la \cite{belk14}. It is straightforward to check that $\le$ is a partial order, so $\calP$ is a poset. The subset $\calP_1$ of elements with one head is a subposet.

The topological realization of the poset $(\calP_1,\le)$ is a contractible simplicial complex on which $F_n$ acts, and the \emph{Stein--Farley complex} $X_n$ is a certain invariant subcomplex with a natural cubical structure. Given $n$-ary forest pairs $[E_-,E_+] \le [E_-,E_+] \cdot E$, write $[E_-,E_+] \preceq [E_-,E_+] \cdot E$ whenever $E$ is an \emph{elementary $n$-ary forest}. This means that each $n$-ary tree of $E$ is either trivial or a single $n$-caret. Now $X_n$ is defined to be the subcomplex of $|\calP_1|$ consisting of chains $x_0<\cdots<x_k$ with $x_i \preceq x_j$ for all $i\le j$. The cubical structure is given by intervals: given $x\preceq y$ ($x$ with $r$ feet), the interval $[x,y]\defeq\{z\mid x\le z\le y\}$ is a Boolean lattice of dimension $r$, and so the simplices in $[x,y]$ form an $r$-cube. Note that $x\prec y$ are adjacent, i.e., share a $1$-cube, if and only if $y=x \cdot E$ for $E$ an elementary $n$-ary forest with just a single $n$-caret.

\begin{theorem}\cite{farley03}
 $X_n$ is a $\CAT(0)$ cube complex.
\end{theorem}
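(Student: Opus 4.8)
The strategy is to apply Gromov's link condition: a cube complex is $\CAT(0)$ exactly when it is simply connected and the link of every vertex is a flag simplicial complex. For $X_n$ this splits into two tasks — simple connectivity, which is easy, and flagness of the vertex links, which is the real content.

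For the first task I would show $X_n$ is contractible, which is more than we need. We are told that $|\calP_1|$ is contractible, so it suffices to see that the inclusion $X_n \into |\calP_1|$ is a homotopy equivalence. This is standard and follows from an ``approximation from below'': for $x \le y$ in $\calP_1$ there is a unique minimal $m(x,y)$ with $x \preceq m(x,y) \le y$, obtained from $x$ by performing at each foot only the first elementary split on the way to $y$. Using $m$ one produces a deformation retraction (equivalently, a collapse) of $|\calP_1|$ onto $X_n$, exactly as in the arguments of Stein, Brown and Farley; hence $X_n$ is contractible, in particular simply connected. I regard this step as routine bookkeeping.

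Now fix a vertex $x$, represented by a reduced $n$-ary forest pair with one head and, say, $r$ feet. Every cube of $X_n$ containing $x$ is an interval $[y,z]$ with $y \preceq x \preceq z$, and such an interval is the product $[y,x] \times [x,z]$; hence $\lk_{X_n} x$ sits inside the join $\lk^{\uparrow} x * \lk^{\downarrow} x$ of its ascending part (directions toward $z \succeq x$) and its descending part (directions toward $y \preceq x$). The ascending part is the easiest: the $z$ with $x \preceq z$ are obtained by choosing a subset of the $r$ feet and splitting each once, all such choices are mutually $\preceq$-compatible, so they fill out a single $r$-cube $[x,x^{+}]$ and $\lk^{\uparrow} x$ is the $(r-1)$-simplex it spans. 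The descending part is similar but requires the reduced representative: the feet of $x$ that arose from splitting a foot of some $y \preceq x$ fall into ``mergeable $n$-blocks'' of consecutive feet, any two distinct blocks are disjoint since a foot has a unique parent, so all of them may be merged at once and $\lk^{\downarrow} x$ is again a simplex, spanned by the cube $[\hat y, x]$ that merges every block. (One may instead phrase all of this intrinsically in terms of the subposet $\{y : y \preceq x\}$.)

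It remains to decide which faces of the join $\lk^{\uparrow} x * \lk^{\downarrow} x$ genuinely lie in $\lk_{X_n} x$, and this is where I expect the main difficulty. A face is a choice $\sigma^{\downarrow}$ of mergeable blocks together with a choice $\sigma^{\uparrow}$ of feet; it lies in $\lk_{X_n} x$ exactly when the interval $[y_{\sigma^{\downarrow}}, z_{\sigma^{\uparrow}}]$ (merge the blocks in $\sigma^{\downarrow}$, split the feet in $\sigma^{\uparrow}$) is a cube of $X_n$, i.e.\ when $y_{\sigma^{\downarrow}} \preceq z_{\sigma^{\uparrow}}$. Passing from $y_{\sigma^{\downarrow}}$ up to $z_{\sigma^{\uparrow}}$ means re-splitting the merged blocks and splitting the chosen feet, and this is accomplished by a single elementary forest precisely when no foot is asked to carry two carets, i.e.\ precisely when no foot in $\sigma^{\uparrow}$ lies inside a block of $\sigma^{\downarrow}$ — a condition on pairs only. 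Since faces contained entirely in $\lk^{\uparrow} x$ or entirely in $\lk^{\downarrow} x$ are automatically admissible, a face lies in $\lk_{X_n} x$ if and only if each of its edges does; equivalently $\lk_{X_n} x$ is the clique complex of a graph, hence flag. Combined with simple connectivity and Gromov's criterion, this shows $X_n$ is $\CAT(0)$. The delicate points are all in this last step: making the mergeable blocks well defined independently of the representative, and verifying that $[y_{\sigma^{\downarrow}}, z_{\sigma^{\uparrow}}]$ is a cube of $X_n$ if and only if the pairwise ``no double caret'' condition holds.
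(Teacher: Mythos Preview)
The paper does not actually prove this theorem; it is stated with a bare citation to \cite{farley03} and no argument is given. So there is no in-paper proof to compare against.

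Your overall strategy via Gromov's link condition is the right one, and your description of the ascending part of the link (an $(r-1)$-simplex) is correct. But your analysis of the descending part contains a genuine error. You assert that the mergeable $n$-blocks at $x$ are those blocks of feet that ``arose from splitting a foot of some $y \preceq x$'' in a reduced representative, that distinct such blocks are disjoint ``since a foot has a unique parent,'' and hence that $\lk^{\downarrow} x$ is a simplex. That is not how the Stein--Farley complex works. In the groupoid $\calP$, \emph{any} $n$ consecutive feet of $x$ may be merged: if $x$ has $r$ feet then for every $0 \le k \le r-n$ the product $y_k \defeq x \cdot [\id_r,\, \elsplit_n(r-(n-1),k)]$ is a legitimate element of $\calP_1$, and $y_k \prec x$ regardless of which representative of $x$ one has in hand. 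Overlapping blocks such as $\{0,\dots,n-1\}$ and $\{1,\dots,n\}$ therefore give \emph{non}-joinable vertices $y_0,y_1$ of $\lk^{\downarrow} x$, so the descending link is not a simplex. In the paper's own language (Lemma~\ref{lem:vertex_link}), $\lk x \cong \match_{\{0,n-1\}}(\Delta^n(r))$ and the $f$-descending part is $\match_{n-1}(\Delta^n(r))$; already for $r \ge n+1$ this has non-adjacent vertices, and its higher connectivity is the nontrivial content of Lemma~\ref{lem:top_matching_conn}.

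Flagness of the link does in fact hold --- $\match_{\{0,n-1\}}(\Delta^n(r))$ is visibly flag, since a family of simplices of $\Delta^n(r)$ is pairwise disjoint exactly when it is a matching --- so Gromov's criterion does go through. But your argument does not establish this, because your picture of $\lk x$ as the join of two simplices, with obstructions coming only from ``split a foot inside a merged block,'' is incorrect: there are also merge--merge obstructions from overlapping blocks, and once those are present the join-of-simplices description collapses. The fix is to replace that description with the matching-complex model and observe flagness there directly.
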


Every cube $\sigma$ has a unique vertex $x$ with fewest feet and a unique vertex $y$ with most feet. There is a unique elementary $n$-ary forest $E$ with $y=x\cdot E$, and the other vertices of $\sigma$ are obtained by multiplying $x$ by subforests of $E$. We use the following notation: suppose $x$ has $\ell$ feet and $E=(A_0,\dots,A_{\ell-1})$, where each $A_i$ is either $\elnothing$ or $\elsplit_n$; here $\elnothing$ is the trivial tree and $\elsplit_n$ is the tree with one $n$-caret. Let $\Phi$ be the set of subforests of $E$, written $\Phi \defeq \gen{A_0,\ldots,A_{\ell-1}}$. Then the vertex set of $\sigma$ is precisely $x\Phi$.

If we center ourselves at a different vertex $z$ of $\sigma$, then we also have to allow merges. Say $z$ has $r > \ell$ feet. Then we can write $\sigma = z\Psi$ where $\Psi$ is of the form $\gen{A_0,\ldots,A_{\ell-1}}$, where each $A_i$ is either $\elnothing$, $\elsplit_n$ or $\elmerge_n$. Here $\elmerge_n$ is the inverse of the tree with one $n$-caret (so an upside-down $n$-caret). The tuple $(A_0,\ldots,A_{\ell-1})$ can be thought of as an $n$-ary forest pair, with all the $\elsplit_n$ in the first forest and all the $\elmerge_n$ in the second forest (and some $\elnothing$s included if necessary). Then the set $\Psi$ is the set of all $n$-ary forest pairs that can be obtained by removing some of the carets. As before, the vertex set of $\sigma$ is $z\Psi$.

Note that the action of $F_n$ on $X_n$ is free, since the action on vertices is given by multiplication in a groupoid, and if an element stabilizes a cube $[x,y]$ then it fixes $x$.

\subsection{Character height functions}\label{sec:char_height_fxns}

In Subsection~\ref{sec:trees}, we defined characters on $F_n$ by first defining ``proto-characters'' on $n$-ary trees, and then viewing elements of $F_n$ as $n$-ary tree pairs. It is straightforward to extend these proto-characters to be defined on $n$-ary forests. To be precise, each leaf of an $n$-ary forest is connected to a unique root, which gives it a depth, so $n$-ary forests $E$ admit the measurements $L(E)$, $R(E)$, $\delta_j(E)$ and $D_i(E)$. Much like the proto-characters on $n$-ary trees induce the characters (group homomorphisms) $\chi_0$, $\chi_1$ and $\psi_i$ from $F_n$ to $\Z$, also the proto-characters on $n$-ary forests induce groupoid homomorphisms $\calP \to \Z$ extending these characters.

In particular, the $\chi_i$ and $\psi_i$ can now be evaluated on vertices of $X_n$. Moreover, any character $\chi$ on $F_n$ can be written as a linear combination
\begin{equation}
\label{eq:character_linear_combination}
\chi = a\chi_0 + c_0\psi_0 + \cdots + c_{n-3}\psi_{n-3} + b\chi_1
\end{equation}
thanks to Proposition~\ref{prop:char_basis}. Hence $\chi$ extends to arbitrary $n$-ary forest pairs by interpreting \eqref{eq:character_linear_combination} as a linear combination of the extended characters.

It will be important to know how our basis characters vary between adjacent vertices of $X_n$.

\begin{lemma}[Varying characters]\label{lem:vary_chars}
 Let $x$ be a vertex in $X_n$, say with feet numbered $0$ through $r-1$, left to right. Let $\elsplit_n(r,k)$ be the elementary $n$-ary forest with $r$ roots and a single non-trivial tree, namely an $n$-caret on the $k$th root. Let $y=x\cdot \elsplit_n(r,k)$. We have the following:
 \begin{enumerate}
  \item If $k=0$ then $\chi_0(y)=\chi_0(x)-1$.
  \item If $k>0$ then $\chi_0(y)=\chi_0(x)$.
  \item If $k<r-1$ then $\chi_1(y)=\chi_1(x)$.
  \item If $k=r-1$ then $\chi_1(y)=\chi_1(x)-1$.
  \item If $0<k$ and $k-1\equiv_{n-1} i$, then $\psi_i(y) = \psi_i(x) + 1$.
  \item If $k<r-1$ and $k\equiv_{n-1} i$, then $\psi_i(y) = \psi_i(x) - 1$.
  \item Otherwise $\psi_i(y) = \psi_i(x)$.
 \end{enumerate}
\end{lemma}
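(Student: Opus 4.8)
The plan is to reduce the entire lemma to the caret-change formulas for the proto-characters $L$, $R$ and $D_i$ that were already worked out in the proof of Lemma~\ref{lem:chars_well_def}, since the characters on $X_n$ are built from these proto-characters by exactly the same recipe as on $F_n$. Concretely, a vertex $x$ of $X_n$ is an equivalence class $[E_-,E_+]$ with one head, and the proto-characters were extended to $n$-ary forests in Subsection~\ref{sec:char_height_fxns} so that $\chi_0([E_-,E_+]) = L(E_+) - L(E_-)$, $\chi_1([E_-,E_+]) = R(E_+)-R(E_-)$, and $\psi_i([E_-,E_+]) = D_i(E_+) - D_i(E_-)$, just as in Definition~\ref{def:chars}. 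Right-multiplying $x$ by the elementary forest $\elsplit_n(r,k)$ means attaching an $n$-caret to the $k$th leaf of $E_+$ (after, if necessary, first expanding so that $E_+$ has a recognizable $k$th leaf among its $r$ leaves), leaving $E_-$ unchanged. So $\chi(y) - \chi(x)$ equals $A(E_+') - A(E_+)$ for the relevant proto-character $A$, where $E_+'$ is $E_+$ with an $n$-caret glued to leaf $k$.

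The first step is therefore to record the caret-addition effect on each proto-character of an $n$-ary forest; these are literally the formulas already established (and depicted in Figures~\ref{fig:proto} and~\ref{fig:proto_change}) for trees, and they carry over verbatim to forests because attaching a caret to a leaf of a forest only alters depths within the single tree containing that leaf, and the leaf numbering across the forest behaves exactly as in a tree. Precisely: $L(E') - L(E) = 1$ if $k=0$ and $0$ otherwise; $R(E')-R(E) = 1$ if $k = r-1$ and $0$ otherwise; and, from items (i)--(iii) in the proof of Lemma~\ref{lem:chars_well_def}, $D_i(E') - D_i(E)$ equals $-1$ when $0 < k$ and $k-1 \equiv_{n-1} i$, equals $+1$ when $k < r-1$ and $k \equiv_{n-1} i$, and equals $0$ otherwise.

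The second step is the bookkeeping of signs. Since $E_-$ is untouched, $\chi_0(y) - \chi_0(x) = L(E_+') - L(E_+)$, which gives items (i) and (ii); note that the sign is \emph{opposite} to what one might naively expect because $\chi_0(f) = L(U) - L(T)$ has the range tree with a minus sign in Definition~\ref{def:chars}, but in $X_n$ we are splitting the feet, which corresponds to modifying $E_+$, the forest carrying the ``$+$'' sign, so the change appears with a plus — and this is why the formula reads $\chi_0(y) = \chi_0(x) - 1$, matching the homeomorphism intuition that splitting a foot at position $0$ shrinks the right-derivative exponent at $0$. The same mechanism yields (iii)--(iv) from the $R$ formula and (v)--(vii) from the $D_i$ formula, where the sign flip relative to items (i)--(iii) of Lemma~\ref{lem:chars_well_def}'s proof is exactly why (v) has a $+1$ while that proof's item (i) has a $-1$.

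The only genuine subtlety — and the step I would be most careful about — is the claim that $y = x \cdot \elsplit_n(r,k)$ really does correspond to gluing a caret onto the $k$th leaf of a representative $E_+$ with $r$ leaves, rather than to some other representative where the indexing is shifted. This requires observing that $x$, having one head, can be written $[E_-, E_+]$ with $E_+$ having exactly $r$ leaves (matching the number of feet of $x$), that right multiplication by $\elsplit_n(r,k)$ is defined so as to split precisely the $k$th foot, and that the extended proto-characters are groupoid homomorphisms (as asserted in Subsection~\ref{sec:char_height_fxns}), so the computation is representative-independent. Granting that, the lemma is just the concatenation of the caret-change table with the sign conventions, and no further work is needed.
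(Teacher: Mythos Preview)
Your overall strategy---use the groupoid-homomorphism property of $\chi$ and reduce to the caret-change formulas for the proto-characters---is exactly the paper's approach. But your execution of the middle step is garbled. You claim that right-multiplying $x=[E_-,E_+]$ by $\elsplit_n(r,k)$ attaches a caret to the $k$th leaf of $E_+$ while leaving $E_-$ untouched, and hence $\chi(y)-\chi(x)=A(E_+')-A(E_+)$. This is not right: $E_+$ has $r$ \emph{roots} (these are the feet), not $r$ leaves, and in the reduced representative $x=[T,\id_r]$ the product is $y=[T',\id_{r+(n-1)}]$, obtained by adding the caret to leaf $k$ of the \emph{minus} tree $T$, while $E_+=\id_r$ becomes $\id_{r+(n-1)}$. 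So the actual formula is $\chi(y)-\chi(x) = -(A(T')-A(T))$, which is why your ``bookkeeping of signs'' paragraph had to do contortions to land on the correct answer; as written, the identity $\chi_0(y)-\chi_0(x)=L(E_+')-L(E_+)$ is simply false.

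The paper sidesteps this entirely by applying the groupoid-homomorphism property directly, without ever choosing a representative of $x$:
\[
\chi(y)=\chi(x)+\chi\bigl([\elsplit_n(r,k),\id_{r+(n-1)}]\bigr) = \chi(x) - A(\elsplit_n(r,k)),
\]
and then computes $L$, $R$, $D_i$ on the explicit forest $\elsplit_n(r,k)$ (all trees trivial except one caret at root $k$). This is a one-line calculation for each proto-character and no sign juggling is needed.
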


\begin{proof}
 Let $\chi\in\{\chi_0,\chi_1,\psi_i\}_{i=0}^{n-3}$ be a basis character. Let $A\in\{L,R,D_i\}_{i=0}^{n-3}$ be the corresponding proto-character. Since $\chi$ is a groupoid morphism $\calP \to \Z$, we have
 $$\chi(y) = \chi(x)+\chi([\elsplit_n(r,k),\id_{r+(n-1)}]) = \chi(x) - A(\elsplit_n(r,k)) \text{.}$$
 Hence, to check the cases in the statement, it suffices to check the following, all of which are readily verified:
 \begin{enumerate}
  \item If $k=0$ then $L(\elsplit_n(r,k)) = 1$.
  \item If $k>0$ then $L(\elsplit_n(r,k)) = 0$.
  \item If $k<r-1$ then $R(\elsplit_n(r,k)) = 0$.
  \item If $k=r-1$ then $R(\elsplit_n(r,k)) = 1$.
  \item If $0<k$ and $k-1\equiv_{n-1} i$, then $D_i(\elsplit_n(r,k)) = -1$.
  \item If $k<r-1$ and $k\equiv_{n-1} i$, then $D_i(\elsplit_n(r,k)) = 1$.
  \item Otherwise $D_i(\elsplit(r,k)) = 0$.
 \end{enumerate}
\end{proof}

Note that since we only consider $0\le i\le n-3$, if $y=x \cdot \elsplit_n(r,r-1)$ (i.e., if we get from $x$ to $y$ by splitting the last foot), then no $\psi_i$ changes, since $r-2\equiv_{n-1} n-2$.

\medskip

So far we know that any character $\chi$ on $F_n$ can be extended to all the vertices of $X_n$. Now we extend it to the entire complex.

\begin{lemma}\label{lem:affine_extend}
 Any character $\chi$ extends to an affine map $\chi \colon X_n \to \R$.
\end{lemma}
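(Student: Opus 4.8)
The plan is to extend $\chi$ cube by cube, checking that the definitions on overlapping cubes agree. Since $X_n$ is a cube complex, it suffices to define an affine map on each closed cube $\sigma$ that restricts on faces to the analogous map, and in particular agrees with the already-defined values on vertices; affine maps on cubes are determined by their vertex values, but not every assignment of vertex values extends affinely, so the real content is that the values $\chi$ takes on the vertices of a single cube of $X_n$ are consistent with \emph{some} (hence a unique) affine function on that cube.

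Concretely, fix a cube $\sigma$ with bottom vertex $x$ having $\ell$ feet, so $\sigma = x\Phi$ with $\Phi = \gen{A_0,\dots,A_{\ell-1}}$ and each $A_i \in \{\elnothing,\elsplit_n\}$. Let $S \subseteq \{0,\dots,\ell-1\}$ be the set of indices with $A_i = \elsplit_n$, so the vertices of $\sigma$ are indexed by subsets $\Delta \subseteq S$, with $x_\Delta$ the result of splitting exactly the feet in $\Delta$. I would first record, using Lemma~\ref{lem:vary_chars} (applied one split at a time, and noting that the ``feet number'' of the $k$th original foot shifts as earlier feet are split, but that the relevant congruence class mod $n-1$ and the ``is it the first/last foot'' conditions are determined once we fix which original feet are split), that $\chi(x_\Delta) - \chi(x)$ depends on $\Delta$ \emph{additively}: there are constants $\lambda_i$ ($i \in S$), namely the increment contributed by splitting the $i$th foot, such that $\chi(x_\Delta) = \chi(x) + \sum_{i\in\Delta}\lambda_i$. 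This is the crux: each elementary split changes $\chi$ by an amount depending only on which foot (among the original feet of $x$) is being split, independent of the order in which splits are performed and of which other splits have been done — this is visible from the list in Lemma~\ref{lem:vary_chars}, since the conditions there ($k=0$, $k=r-1$, $k\equiv i$, $k-1\equiv i$) are about the position of the foot relative to the original feet of $x$, which is fixed. Then the map $\sigma \to \R$ sending the cube (identified with $[0,1]^S$ via $\Delta \mapsto$ its indicator vector, extended affinely) to $\chi(x) + \sum_{i\in S}\lambda_i t_i$ is affine and has the correct vertex values.

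Next I would check compatibility on intersections. Two cubes of $X_n$ intersect in a common face, which is again a cube; so it is enough to see that the affine map just constructed on $\sigma$ restricts, on any face $\tau$ of $\sigma$, to the map constructed for $\tau$. A face of $\sigma = x\Phi$ is $x'\Phi'$ where $x'$ is obtained from $x$ by splitting some subset of feet and $\Phi'$ records a subset of the remaining splits; since the $\lambda_i$ are intrinsic to ``split the $i$th original foot of $x$'' and the same increments govern the face, the restriction of the affine function on $\sigma$ to $\tau$ is the affine function built from the vertex values on $\tau$, which is the one the construction assigns to $\tau$. Because $X_n$ is a CW complex obtained by gluing these cubes along faces, these locally defined affine maps glue to a single continuous map $\chi\colon X_n \to \R$, affine on each cube, and the equivariance $\chi(gy) = \chi(g) + \chi(y)$ for $g\in F_n$ holds because it holds on vertices (groupoid-morphism property) and both sides are affine on cubes.

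The main obstacle is the bookkeeping in the first step: one must be careful that the increment $\lambda_i$ attached to splitting a given original foot of $x$ really is well defined, i.e., unchanged when other feet are split first. The point is that splitting a foot to the left or right of foot $i$ renumbers foot $i$ but preserves whether it is the leftmost/rightmost foot and preserves its residue mod $n-1$ only if the foot to its left is \emph{not} split — wait, that is exactly the subtlety: splitting foot $i-1$ inserts $n-1$ new feet, so foot $i$'s new index is congruent to its old index mod $n-1$, and likewise leftmost/rightmost status is preserved. So the residues and extremal conditions in Lemma~\ref{lem:vary_chars} are genuinely invariant, and $\lambda_i$ is well defined; once this is pinned down the rest is routine. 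I expect the write-up to lean on Lemma~\ref{lem:vary_chars} for all numerical facts and on the cube-complex structure of $X_n$ recalled in Section~\ref{sec:stein_farley} for the gluing.
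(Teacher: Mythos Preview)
Your approach is correct and essentially the same as the paper's: both hinge on the observation that the increment $\chi$ picks up from splitting a given original foot of $x$ is independent of which other feet have already been split (your ``$\lambda_i$ is well-defined'' claim). The paper packages this slightly differently, invoking a general cube lemma (Lemma~\ref{lem:affine_on_cubes}) to reduce the problem to $2$-faces and then verifying precisely the $|\Delta'|\le 1$ case of your additivity check, whereas you prove additivity for arbitrary subsets directly; the underlying computation from Lemma~\ref{lem:vary_chars} is identical.
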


Before proving this, we reduce the problem using the following:

\begin{lemma}\cite[Lemma~2.4]{witzel15}\label{lem:affine_on_cubes}
 Let $\varphi \colon \{0,1\}^r \to \R$ be a map that can be affinely extended to the $2$-faces of the cube $[0,1]^r$. Then $\varphi$ can be affinely extended to all of $[0,1]^r$.
\end{lemma}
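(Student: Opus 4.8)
The plan is to build the affine extension explicitly out of the values of $\varphi$ at the origin vertex and at the $r$ vertices of Hamming weight one, and then check by induction on the Hamming weight that this one affine function reproduces $\varphi$ at every vertex of the cube. Concretely, write $e_0=(0,\dots,0)$, let $\mathbf{e}_i$ be the vertex with a $1$ in coordinate $i$ and $0$ elsewhere, and define $L\colon[0,1]^r\to\R$ by $L(x)=\varphi(e_0)+\sum_{i=1}^r a_i x_i$ with $a_i\defeq\varphi(\mathbf{e}_i)-\varphi(e_0)$. (Throughout, ``affine on $[0,1]^r$'' means a map of the form $x\mapsto c+\sum_i a_i x_i$, not a multilinear one.) By construction $L$ agrees with $\varphi$ on every vertex of weight at most $1$, and $L$ is the unique affine candidate, so it remains only to verify $L=\varphi$ on all $2^r$ vertices.

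The key observation is that the $2$-face hypothesis is precisely a statement about vanishing of discrete mixed second differences. If $F$ is a $2$-face spanned by two coordinates $i\ne j$ with the other $r-2$ coordinates frozen (to $0$ or $1$), and $w_{00},w_{10},w_{01},w_{11}$ denote its four vertices (subscripts recording the values of coordinates $i,j$), then evaluating any affine function $c'+a'x_i+b'x_j$ at these four vertices and taking the alternating sum gives $0$; hence the hypothesis that $\varphi|_F$ extends affinely forces $\varphi(w_{11})-\varphi(w_{10})-\varphi(w_{01})+\varphi(w_{00})=0$. The same computation shows that the globally affine function $L$ also has vanishing mixed second difference over every such $2$-face.

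Now I would argue by induction on the weight $|v|=\sum_i v_i$ that $L(v)=\varphi(v)$. The cases $|v|\le1$ hold by the definition of $L$. For $|v|=k\ge2$, pick coordinates $i\ne j$ with $v_i=v_j=1$ and consider the four vertices $v$, $v-\mathbf{e}_i$, $v-\mathbf{e}_j$, $v-\mathbf{e}_i-\mathbf{e}_j$: these are exactly the vertices of the $2$-face spanned by coordinates $i,j$ with the remaining coordinates frozen at their values in $v$. The three vertices $v-\mathbf{e}_i$, $v-\mathbf{e}_j$, $v-\mathbf{e}_i-\mathbf{e}_j$ have weight $k-1$, $k-1$, $k-2$, so the inductive hypothesis applies to them. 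Combining the two vanishing mixed second differences from the previous paragraph,
$$\varphi(v)=\varphi(v-\mathbf{e}_i)+\varphi(v-\mathbf{e}_j)-\varphi(v-\mathbf{e}_i-\mathbf{e}_j)=L(v-\mathbf{e}_i)+L(v-\mathbf{e}_j)-L(v-\mathbf{e}_i-\mathbf{e}_j)=L(v),$$
which closes the induction. Hence $L$ is the desired affine extension of $\varphi$.

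There is no deep obstacle here; the only points requiring care are (a) using the degree-one notion of ``affine'', so that an affine function genuinely has vanishing mixed second differences, and (b) observing that the four vertices chosen in the inductive step really do form a $2$-face of $[0,1]^r$ regardless of how many \emph{other} coordinates of $v$ equal $1$, since a $2$-face is allowed to have its frozen coordinates set to either $0$ or $1$. With those in hand, the proof is just a descent on Hamming weight.
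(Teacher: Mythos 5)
Your proof is correct and follows essentially the same route as the paper: both define the unique affine function determined by the values at the origin and the weight-one vertices, and then induct on Hamming weight, using the $2$-face hypothesis (equivalently, the vanishing mixed second difference) to force agreement at each higher-weight vertex. The only difference is that you spell out the affine formula and the second-difference identity explicitly, which the paper leaves implicit.
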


\begin{proof}
 This was proved in \cite{witzel15}, and we repeat the proof here for the sake of being self contained. There is a unique affine function $\tilde{\varphi} \colon \R^r \to\R$ that agrees with $\varphi$ on the zero vector and the $r$ standard basis vectors. We claim that $\tilde{\varphi}$ agrees with $\varphi$ on all the other vertices of $[0,1]^r$ as well, and hence defines an affine extension of $\varphi$ to all of $[0,1]^r$. Let $v=(v_1,\dots,v_r)$ be a vertex with at least two entries equal to $1$ (and the others all $0$). Pick $i\ne j$ with $v_i=v_j=1$. For any $w$ obtained from $v$ by zeroing out $v_i$, $v_j$, or both, we have by induction that $\tilde{\varphi}(w)=\varphi(w)$. These three $w$ vertices, plus $v$, define a $2$-face of $[0,1]^r$. By assumption, $\varphi$ can be affinely extended to this $2$-face, and the value on $v$ is uniquely determined by the values on the other three vertices. Hence $\tilde{\varphi}(v)=\varphi(v)$.
\end{proof}

\begin{proof}[Proof of Lemma~\ref{lem:affine_extend}]
 Let $\square_2=v \Phi$ be a $2$-cube in $X_n$, say $\Phi=\gen{A_0,\dots,A_{r-1}}$, with exactly two $A_i$ being $\elsplit_n$ and all others being $\elnothing$. Thanks to Lemma~\ref{lem:affine_extend}, we just need to show that $\chi$ extends affinely to $\square_2$. Say $A_j=A_k=\elsplit_n$ for $j<k$, and let $v_j=v\gen{\elnothing,\dots,A_j,\dots,\elnothing}$, $v_k=v\gen{\elnothing,\dots,A_k,\dots,\elnothing}$ and $v_{j,k}=v\gen{\elnothing,\dots,A_j,\dots,A_k,\dots,\elnothing}$. Hence the vertices of $\square_2$ are $v$, $v_j$, $v_k$ and $v_{j,k}$. Now we just need to show that $\chi(v_j)-\chi(v) = \chi(v_{j,k})-\chi(v_k)$.

 It suffices to do this for $\chi\in\{\chi_0,\chi_1,\psi_i\}_{i=0}^{n-3}$. It is clear that $\chi_0(v_j)-\chi_0(v) = \chi_0(v_{j,k})-\chi_0(v_k)$, namely they equal $-1$ if $j=0$ and equal $0$ otherwise, and similarly we always have $\chi_1(v_j)-\chi_1(v) = \chi_1(v_{j,k})-\chi_1(v_k) = 0$. Next consider $\psi_i$. By Lemma~\ref{lem:vary_chars}, we have that $\psi_i(v_j)-\psi_i(v)=1$ if and only if $0<j$ and $j-1\equiv_{n-1} i$, which also holds if and only if $\psi_i(v_{j,k})-\psi_i(v_k)=1$. Also, $\psi_i(v_j)-\psi_i(v)=-1$ if and only if $j\equiv_{n-1} i$ if and only if $\psi_i(v_{j,k})-\psi_i(v_k)=-1$ (since $j<k$, we know $j$ cannot be the highest index of a foot of either $v$ or $v_k$). The only other option is $\psi_i(v_j)-\psi_i(v) = \psi_i(v_{j,k})-\psi_i(v_k) = 0$.
\end{proof}

These extended characters $\chi$ will be our height functions. Our secondary height will be given by the number of feet function $f$.

\begin{observation}\label{obs:feet_affine}
 There is a map $f \colon X_n \to \R$ that is affine on cubes and assigns to any vertex its number of feet. It is a Morse function.
\end{observation}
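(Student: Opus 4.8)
The plan is to establish the two parts of the statement in turn: that $f$ is well defined and affine on cubes, and that it gives a Morse function in the sense of Definition~\ref{def:morse}. Recall that a vertex of $X_n$ is an equivalence class $[E_-,E_+]\in\calP_1$ with $E_-$ a single $n$-ary tree, and I define $f([E_-,E_+])$ to be the number of roots (feet) of $E_+$. Since every expansion and reduction only adds or removes $n$-carets at leaves of $E_-$ and of $E_+$, it changes neither the number of roots of $E_-$ nor of $E_+$; hence $f$ is constant on equivalence classes and is well defined on vertices.

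For affineness, I would center a cube $\sigma$ at its vertex $x$ of fewest feet, so that $\sigma=x\Phi$ with $\Phi=\gen{A_0,\dots,A_{\ell-1}}$, where $x$ has $\ell$ feet and exactly $d$ of the $A_i$ equal $\elsplit_n$ (so $\sigma$ is a $d$-cube). A vertex of $\sigma$ obtained by turning on $e$ of those carets has $\ell+e(n-1)$ feet, so in the cube coordinates $(t_0,\dots,t_{d-1})\in[0,1]^d$ the restriction of $f$ to the vertices of $\sigma$ is the restriction of the affine function $\ell+(n-1)\sum_j t_j$. This is the unique affine function on $\sigma$ agreeing with $f$ at the vertices of $\sigma$; since affine functions on a cube are determined by their vertex values, the extensions coming from two cubes that share a face agree on that face, so $f$ is globally well defined and affine on cubes. (One could instead appeal to Lemma~\ref{lem:affine_on_cubes}, but the explicit formula makes this transparent.)

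For the Morse property, the statement is most simply read as the claim that $(f,0)$ is a Morse function, which is then immediate: $f$ is affine on cubes, the secondary height $0$ takes one value, and $\varepsilon=n-1$ works, since any two adjacent vertices $v,w$ satisfy $\abs{f(v)-f(w)}=n-1$ (one being obtained from the other by attaching a single $n$-caret to a foot). For the later applications one wants $f$ as the secondary height beneath an arbitrary character $\chi$ --- the primary height, extended to $X_n$ and affine on cubes by Lemma~\ref{lem:affine_extend} --- so I would also check that $(\chi,f)$ is a Morse function. Here $f$ is $\Z_{\ge1}$-valued rather than finite-valued, but its image is well ordered, which is all that the proof of Lemma~\ref{lem:morse} uses of the secondary height; and a uniform $\varepsilon>0$ with $\abs{\chi(v)-\chi(w)}\ge\varepsilon$ whenever $\chi(v)\ne\chi(w)$ exists because, by Lemma~\ref{lem:vary_chars}, $\chi(v)-\chi(w)$ depends only on whether the affected foot is the first, whether it is the last, and its residue modulo $n-1$, hence takes values in a finite set $S\subseteq\R$; one takes $\varepsilon$ to be the least absolute value of a nonzero element of $S$ (and any $\varepsilon>0$ if $S=\{0\}$).

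I expect no genuine obstacle here; the work is bookkeeping. The only mildly delicate points are the uniform choice of $\varepsilon$ --- disposed of by the finiteness observation above --- and the mismatch between the letter of Definition~\ref{def:morse} (finitely many values of the secondary height) and the fact that $f$ merely has well-ordered image; the latter is harmless, since well-orderedness of the secondary height is all that the Morse Lemma's proof actually exploits.
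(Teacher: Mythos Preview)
Your proof of the observation itself is correct and follows the same approach as the paper, just with more detail: the paper simply says the affine extension is ``straightforward'' and then notes that $(f,0)$ is a Morse function because adjacent vertices satisfy $\abs{f(v)-f(w)}=n-1$.

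Your additional paragraph on $(\chi,f)$ being a Morse function is not part of this observation; that is the content of Proposition~\ref{prop:char_morse}. There the paper avoids the issue you raise about $f$ not taking finitely many values by working only on the truncated complex $X_n^{p\le f\le q}$ with $q<\infty$, so Definition~\ref{def:morse} applies verbatim and no appeal to well-orderedness is needed. Your workaround via well-orderedness is plausible but unnecessary given the paper's setup, and in any case does not belong in the proof of this observation.
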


\begin{proof}
 That $f$ extends affinely is straightforward. When we say that $f$ is a Morse function, in the language of Definition~\ref{def:morse} this means that $(f,0)$ is a Morse function. This is true because adjacent vertices $v$ and $w$ satisfy $\abs{f(v)-f(w)}=n-1$.
\end{proof}

Let $X_n^{p\le f\le q}$ be the subcomplex of $X_n$ supported on vertices $v$ with $p\le f(v)\le q$.

\begin{proposition}\label{prop:char_morse}
 Let $\chi$ be a character. The pair $(\chi,f)$ is a Morse function on $X_n^{p\le f\le q}$ for any $p\le q<\infty$.
\end{proposition}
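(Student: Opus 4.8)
The plan is to verify the three requirements of Definition~\ref{def:morse} for the pair $(h,s)=(\chi,f)$ on $Y=X_n^{p\le f\le q}$, taking $h=\chi$ as the height and $s=f$ as the secondary height. Two of these are immediate. First, $\chi$ is affine on cubes by Lemma~\ref{lem:affine_extend} and $f$ is affine on cubes by Observation~\ref{obs:feet_affine}, and affineness on cells is inherited by the subcomplex $Y$. Second, $f$ takes only finitely many values on the vertices of $Y$, since the number of feet of a vertex is a positive integer lying in the bounded interval $[p,q]$. This is precisely where the hypothesis $q<\infty$ is used: on all of $X_n$ the secondary height $f$ is unbounded on vertices, so one genuinely must pass to a slab.

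It remains to produce the separation constant $\varepsilon>0$. Every edge of $X_n$ joins a vertex $x$ (say with $r$ feet) to $x\cdot\elsplit_n(r,k)$ for some $0\le k\le r-1$, possibly with the roles of the two endpoints swapped. By Lemma~\ref{lem:vary_chars}, across such an edge each of the basis characters $\chi_0,\psi_0,\dots,\psi_{n-3},\chi_1$ changes by an element of $\{-1,0,1\}$. Writing $\chi=a\chi_0+c_0\psi_0+\cdots+c_{n-3}\psi_{n-3}+b\chi_1$ as in Proposition~\ref{prop:char_basis}, the difference of $\chi$ along any edge therefore lies in the finite, symmetric set
\[
S\defeq\Bigl\{\,a\epsilon_0+c_0\epsilon_1+\cdots+c_{n-3}\epsilon_{n-2}+b\epsilon_{n-1}\ :\ \epsilon_0,\dots,\epsilon_{n-1}\in\{-1,0,1\}\,\Bigr\}\text{,}
\]
which depends only on $\chi$ and $n$, not on the edge. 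Let $\varepsilon$ be the smallest absolute value of a nonzero element of $S$, if such exists, and $\varepsilon=1$ otherwise; then $\varepsilon>0$.

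Finally, take any pair of adjacent vertices $v,w$ of $Y$ (hence of $X_n$). If $\chi(v)\ne\chi(w)$, then $\chi(v)-\chi(w)$ is a nonzero element of $S$, so $\abs{\chi(v)-\chi(w)}\ge\varepsilon$. If instead $\chi(v)=\chi(w)$, we fall back on the secondary height: adjacent vertices of $X_n$ always satisfy $\abs{f(v)-f(w)}=n-1\ge 1$, so $f(v)\ne f(w)$. In either case the condition of Definition~\ref{def:morse} is met, so $(\chi,f)$ is a Morse function on $X_n^{p\le f\le q}$. There is no real obstacle in this argument; everything is a direct consequence of the affine extensions already constructed together with Lemma~\ref{lem:vary_chars}, and the only point that deserves attention is the role of the slab hypothesis $q<\infty$ in making the secondary height finite-valued.
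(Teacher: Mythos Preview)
Your proof is correct and follows essentially the same approach as the paper: verify affineness via Lemma~\ref{lem:affine_extend} and Observation~\ref{obs:feet_affine}, note that $f$ takes finitely many values on the slab, and use Lemma~\ref{lem:vary_chars} to see that edge differences of $\chi$ lie in a finite set, whence a uniform $\varepsilon$ exists. Your handling of the degenerate case $S\setminus\{0\}=\emptyset$ (setting $\varepsilon=1$) is a minor extra care the paper omits, but otherwise the arguments are the same.
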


\begin{proof}
 We check the conditions required by Definition~\ref{def:morse}. We have extended $\chi$ and $f$ to affine functions in Lemma~\ref{lem:affine_extend} and Observation~\ref{obs:feet_affine}. By construction $f$ takes finitely many values on $X_n^{p\le f\le q}$. Write $\chi=a\chi_0+c_0\psi_0+\cdots+c_{n-3}\psi_{n-3}+b\chi_1$. Let
 $$\varepsilon\defeq \min\{\abs{d}\mid d=\alpha a + \beta b + \gamma_0 c_0 +\cdots+\gamma_{n-3} c_{n-3} \ne 0 \text{ for } \alpha,\beta,\gamma_i\in\{-1,0,1\}\}\text{.}$$
 Since we only consider such $d$ that are non-zero, and there are finitely many, we have $0<\varepsilon$. For any pair of adjacent vertices $v$ and $w$, we know from Lemma~\ref{lem:vary_chars} that for any basis character $\phi\in\{\chi_0,\chi_1,\psi_i\}_{i=0}^{n-3}$, we have $\phi(v) - \phi(w) \in \{-1,0,1\}$. Hence for any character $\chi$ we have $\chi(v) - \chi(w) = \alpha a + \beta b + \gamma_0 c_0 +\cdots+\gamma_{n-3} c_{n-3}$ for some $\alpha,\beta,\gamma_i\in\{-1,0,1\}$. In particular, either $\abs{\chi(v) - \chi(w)} \ge \varepsilon$ or else $\chi(v)=\chi(w)$. The condition $f(v)\ne f(w)$ is always satisfied anyway for adjacent vertices, so we conclude that $(\chi,f)$ is a Morse function.
\end{proof}


\section{Links and matchings}\label{sec:links_matchings}

We will use Morse theory to reduce the computation of $\Sigma^m(F_n)$ to questions about ascending links in $X_n$. In this section we discuss a useful model for links in $X_n$.

\begin{definition}
 Let $\Delta$ be a simplicial complex, say of dimension $d$. Let $D\subseteq\{0,\dots,d\}$. A \emph{$D$-matching} is a subset $\mu$ of $\Delta$, consisting of $k$-simplices for $k\in D$ such that any two simplices in $\mu$ are disjoint. If $D=\{k\}$ is a singleton we may write ``$k$-matching'' instead of ``$\{k\}$-matching''. For example a $0$-matching is just any collection of $0$-simplices, and a $1$-matching is what is usually called a \emph{matching} on the graph $\Delta^{(1)}$. For our purposes, we will be interested in certain $(n-1)$-dimensional complexes $\Delta=\Delta^n(r)$, defined below, and $D=\{0,n-1\}$, so $D$-matchings are collections of pairwise disjoint $0$-simplices and $(n-1)$-simplices. In general, the $D$-matchings of $\Delta$ form a simplicial complex, denoted $\match_D(\Delta)$, with face relation given by inclusion, called the \emph{$D$-matching complex} of $\Delta$.
\end{definition}

Define $\Delta^n(r)$ as follows. It is a simplicial complex on $r$ vertices, labeled $v_0$ through $v_{r-1}$, such that a collection of vertices spans a simplex precisely when $\abs{i-j}<n$ for all vertices $v_i$ and $v_j$ in the collection. For example, $\Delta^1(r)$ is a discrete set of $r$ vertices, and $\Delta^2(r)$ is the linear graph on $r$ vertices. The complex $\Delta^3(9)$ is shown in Figure~\ref{fig:Delta^3(9)}. To keep notation straight, we reiterate that $r$ is the number of vertices of $\Delta^n(r)$, and $n$ is the maximum number of vertices that may share a simplex.

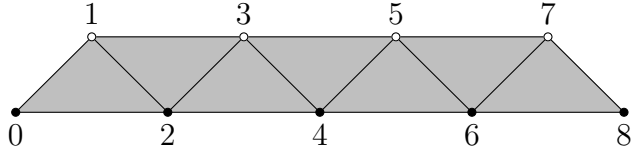
\begin{figure}[htb]
 \begin{tikzpicture}\centering
  \filldraw[lightgray] (0,0) -- (8,0) -- (7,1) -- (1,1);
  \draw (0,0) -- (1,1) -- (2,0) -- (3,1) -- (4,0) -- (5,1) -- (6,0) -- (7,1) -- (8,0)   (0,0) -- (8,0)   (1,1) -- (7,1);
  \filldraw (0,0) circle (1.5pt)   (2,0) circle (1.5pt)   (4,0) circle (1.5pt)   (6,0) circle (1.5pt)   (8,0) circle (1.5pt);
  \filldraw[white] (1,1) circle (1.5pt)   (3,1) circle (1.5pt)   (5,1) circle (1.5pt)   (7,1) circle (1.5pt);
  \draw (1,1) circle (1.5pt)   (3,1) circle (1.5pt)   (5,1) circle (1.5pt)   (7,1) circle (1.5pt);
  \node at (0,-.3) {$0$};   \node at (1,1.3) {$1$};   \node at (2,-.3) {$2$};   \node at (3,1.3) {$3$};    \node at (4,-.3) {$4$};   \node at (5,1.3) {$5$};   \node at (6,-.3) {$6$};   \node at (7,1.3) {$7$};   \node at (8,-.3) {$8$};
 \end{tikzpicture}
 \caption{The complex $\Delta^3(9)$. The vertices are numbered $0$ to $8$, left to right, with the even vertices labeled by a black circle and the odd vertices labeled by a white circle. (The distinction will be important later.)}\label{fig:Delta^3(9)}
\end{figure}

For any $0\le i\le j\le r-1$ with $j-i<n$, let $e_{[i,j]}$ denote the $(j-i)$-simplex $\{i,i+1,\dots,j\}$, so $\{e_{[i,j]}\}$ is a $0$-simplex in the $(j-i)$-matching complex. When a matching $\{e\}$ consists of a single simplex $e$, we will usually abuse notation and just write $e$ for the matching. For example $e_{[i,j]}$ now represents both a $(j-i)$ simplex in $\Delta^n(r)$ and a $0$-simplex in $\match_{j-i}(\Delta^n(r))$, and $v_k$ represents both a $0$-simplex in $\Delta^n(r)$ and a $0$-simplex in $\match_0(\Delta^n(r))$.

\begin{lemma}[$(n-1)$-matchings]\label{lem:top_matching_conn}
 For $n,r\in\N$, $\match_{n-1}(\Delta^n(r))$ is $(\lfloor\frac{r-n}{2n-1}\rfloor-1)$-connected.
\end{lemma}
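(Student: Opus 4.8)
The plan is to induct on $r$, using a discrete Morse-theoretic / vertex-link argument on the complex $\mathcal{M}_{n-1}(\Delta^n(r))$. The base cases are $r$ small enough that $\lfloor\frac{r-n}{2n-1}\rfloor - 1 \le -1$, where being $(-1)$-connected means nonempty: for $r \ge n$ there is at least one $(n-1)$-simplex $e_{[0,n-1]}$, so the matching complex is nonempty, and for $r < n$ the claimed connectivity bound is $\le -2$ (vacuous). So the base case is essentially free.

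For the inductive step I would single out the ``last-block'' vertex, i.e. the $0$-simplex $\{e_{[r-n,r-1]}\}$ of $\mathcal{M}_{n-1}(\Delta^n(r))$ corresponding to the rightmost $(n-1)$-simplex of $\Delta^n(r)$, and more generally all vertices of $\mathcal{M}_{n-1}$ that ``use'' the last vertex $v_{r-1}$ of $\Delta^n(r)$ — these are the $(n-1)$-simplices $e_{[r-n,r-1]}, e_{[r-n+1,r]}$ (only the former is valid), in fact only $e_{[r-n,r-1]}$ uses $v_{r-1}$. The first move is to observe that $\mathcal{M}_{n-1}(\Delta^n(r))$ is obtained from $\mathcal{M}_{n-1}(\Delta^n(r-1))$ by gluing in the star of this one extra vertex $w \defeq e_{[r-n,r-1]}$. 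By a standard gluing/Mayer–Vietoris-plus-van-Kampen argument, if the link of $w$ in $\mathcal{M}_{n-1}(\Delta^n(r))$ is $(k-1)$-connected and $\mathcal{M}_{n-1}(\Delta^n(r-1))$ is $k$-connected, then $\mathcal{M}_{n-1}(\Delta^n(r))$ is $k$-connected. (This is exactly the shape of Corollary~\ref{cor:morse}: put a Morse function on $\mathcal{M}_{n-1}(\Delta^n(r))$ whose descending link at each of the finitely many ``new'' vertices is the relevant link, with $\mathcal{M}_{n-1}(\Delta^n(r-1))$ as the sublevel set.)

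The heart of the argument is then to identify the link of $w = e_{[r-n,r-1]}$ inside $\mathcal{M}_{n-1}(\Delta^n(r))$: a matching simplex is joinable to $w$ iff all of its simplices are disjoint from $\{v_{r-n},\dots,v_{r-1}\}$, i.e. it lives in $\mathcal{M}_{n-1}(\Delta^n(r-n))$ after relabeling — but one must be careful, since an $(n-1)$-simplex $e_{[i,j]}$ with $i \le r-n-1 \le j$ is automatically disjoint from the block only if $j \le r-n-1$; and there is a subtlety about whether a single leftover vertex $v_{r-n-1}$ lying between the two uses is available. Working this out, the link of $w$ should be (up to relabeling) $\mathcal{M}_{n-1}(\Delta^n(r-n))$ or possibly $\mathcal{M}_{n-1}(\Delta^n(r-n+1))$ — I would check the exact index — and by induction this is $\left(\lfloor\frac{(r-n)-n}{2n-1}\rfloor - 1\right)$-connected, i.e. $\left(\lfloor\frac{r-2n}{2n-1}\rfloor - 1\right)$-connected. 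To make the gluing lemma deliver $\left(\lfloor\frac{r-n}{2n-1}\rfloor-1\right)$-connectivity, I need $\lfloor\frac{r-2n}{2n-1}\rfloor - 1 \ge \lfloor\frac{r-n}{2n-1}\rfloor - 2$, which is the key arithmetic inequality; note $\frac{r-n}{2n-1} - \frac{r-2n}{2n-1} = \frac{n}{2n-1} < 1$, so the two floors differ by at most $1$, and the inequality holds. Thus removing $n$ vertices of $\Delta^n(r)$ drops the connectivity of the matching complex by at most $1$, but the floor only drops by $1$ every $2n-1$ steps — the ``$2n-1$'' in the denominator is exactly the right bookkeeping to absorb the loss, which is presumably why this constant appears. (The mismatch — removing $n$ costs one unit of connectivity but the bound allows a drop only every $2n-1$ — suggests the induction should really remove a bigger chunk, perhaps the last two blocks at once, so that I remove $\approx 2n-1$ vertices and drop the floor by exactly $1$; I would set up whichever of these bookkeeping schemes makes the arithmetic cleanest.)

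The step I expect to be the main obstacle is the precise identification of the link of the chosen vertex (or vertices) in $\mathcal{M}_{n-1}(\Delta^n(r))$ and confirming it is again of the form $\mathcal{M}_{n-1}(\Delta^n(r'))$ with the right $r'$ — the off-by-one accounting near the ``seam'' $v_{r-n-1}$, together with the possibility that several new vertices (not just one) must be coned off, so that one actually needs a small filtration of $\mathcal{M}_{n-1}(\Delta^n(r))$ over $\mathcal{M}_{n-1}(\Delta^n(r-1))$ rather than a single cone attachment. Once the link is correctly identified, everything else (the gluing lemma and the floor inequality) is routine.
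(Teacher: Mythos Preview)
Your build-up-one-vertex-at-a-time approach has a genuine gap at exactly the values of $r$ where the target connectivity jumps. Set $k=\lfloor\frac{r-n}{2n-1}\rfloor-1$. You correctly identify that the link of $w=e_{[r-n,r-1]}$ is $\match_{n-1}(\Delta^n(r-n))$, hence $(k-1)$-connected, and you invoke the gluing principle ``$Z$ is $k$-connected and $\lk w$ is $(k-1)$-connected $\Rightarrow$ $Y$ is $k$-connected''. But when $(2n-1)\mid(r-n)$, the inductive hypothesis only gives that $Z=\match_{n-1}(\Delta^n(r-1))$ is $(k-1)$-connected, not $k$-connected. Attaching a cone over a $(k-1)$-connected link to a $(k-1)$-connected space yields only $(k-1)$-connectivity: the relative pair $(Y,Z)$ is $k$-connected, so $\pi_k(Z)\to\pi_k(Y)$ is surjective, but there is no reason $\pi_k(Z)$ should vanish. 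Concretely, for $n=2$, $r=5$: $Z=\match_1(\Delta^2(4))$ is disconnected (two components), and only by accident does coning off the (disconnected) link of $e_{[3,4]}$ connect them; nothing in your argument forces this.

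Your proposed fix --- remove a chunk of size $2n-1$ so the floor drops by exactly one --- does not repair this. You would then start from $\match_{n-1}(\Delta^n(r-(2n-1)))$, which by induction is $(k-1)$-connected, and attach $2n-1$ cones; but cone attachments over $(k-1)$-connected links still cannot \emph{raise} connectivity from $k-1$ to $k$. A Morse/build-up scheme can only preserve the connectivity of the base, never improve it. The paper instead covers $\match_{n-1}(\Delta^n(r))$ by the $n$ contractible stars $S_i=\st(e_{[i,i+n-1]})$ for $0\le i\le n-1$ (which together cover, since any matching either contains one of these or is disjoint from $e_{[0,n-1]}$, hence lies in $S_0$), observes that all $S_i$ share the vertex $e_{[r-n,r-1]}$ so the nerve is a simplex, and identifies each nontrivial intersection as $\match_{n-1}(\Delta^n(r'))$ with $r'\ge r-(2n-1)$; the Nerve Lemma then gives the result directly. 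The point is that starting from contractible pieces, rather than building up from a less-connected base, is what lets the induction close.
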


\begin{proof}
 Note that $n$ is fixed. We induct on $r$. The base case is that $\match_{n-1}(\Delta^n(r))$ is non-empty when $n\le r$, which is true. Now assume that $3n-1\le r$. In this case, for any $(n-1)$-matching $\mu$ in $\match_{n-1}(\Delta^n(r))$, either $e_{[i,i+(n-1)]}\in \mu$ for some $0\le i\le n-1$, or else every $0$-simplex of $\mu$ is an $(n-1)$-simplex of $\Delta^n(r)$ that is disjoint from $e_{[0,n-1]}$. In particular, $\match_{n-1}(\Delta^n(r))$ is covered by the contractible subcomplexes $S_i\defeq \st(e_{[i,i+(n-1)]})$ for $0\le i\le n-1$. The $S_i$ all contain the matching $e_{[r-n,r-1]}$, since $3n-1\le r$ implies $2n-2<r-n$, so the nerve of the covering is contractible (a simplex). Any intersection $S_{i_1}\cap\cdots \cap S_{i_t}$ for $t>1$ is isomorphic to a matching complex of the form $\match_{n-1}(\Delta^n(r'))$ for $r'\ge r-(2n-1)$. By induction this is $(\lfloor\frac{r-(2n-1)-n}{2n-1}\rfloor-1)$-connected, and hence $(\lfloor\frac{r-n}{2n-1}\rfloor-2)$-connected. The result now follows from the Nerve Lemma \cite[Lemma~1.2]{bjoerner94}.
\end{proof}

For example, $\match_2(\Delta^3(9))$ is connected, which is clear from Figure~\ref{fig:Delta^3(9)}.

\medskip

There is an analogy between $\{0,n-1\}$-matchings on $\Delta^n(r)$ and points in the link of a vertex $x\in X_n$ with $r$ feet. That is, each $0$-matching is a single vertex of $\Delta^n(r)$, so corresponds to splitting a foot of $x$ into $n$ new feet, and each $(n-1)$-matching is a collection of $n$ sequential vertices of $\Delta^n(r)$, so corresponds to merging $n$ sequential feet of $x$ into one new foot. We make this rigorous in the next lemma.

Let $x$ be a vertex of $X_n$ with $r$ feet. The cofaces of $x$ are the cells $\sigma = x\Psi$, for every $\Psi$ such that $x\Psi$ makes sense. If $\Psi=\gen{A_0,\dots,A_{\ell-1}}$ for $A_i\in\{\elnothing,\elsplit_n,\elmerge_n\}$ ($0\le i\le \ell-1$), then the rule is that $r$ must equal the number of $A_i$ that are $\elnothing_n$ or $\elsplit_n$, plus $n$ times the number that are $\elmerge_n$.

\begin{lemma}[Link model]\label{lem:vertex_link}
 If a vertex $x\in X_n$ has $r$ feet then $\lk x \cong \match_{\{0,n-1\}}(\Delta^n(r))$.
\end{lemma}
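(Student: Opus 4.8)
The plan is to exhibit an explicit simplicial isomorphism between $\lk x$ and $\match_{\{0,n-1\}}(\Delta^n(r))$ by matching up vertices on both sides and then checking that simplices correspond. Recall that a vertex $w$ of $\lk x$ is a direction in $X_n$ out of $x$ into an adjacent vertex; by the discussion in Section~\ref{sec:stein_farley}, such adjacent vertices are exactly $x \cdot \elsplit_n(r,k)$ for $0 \le k \le r-1$ (splitting the $k$th foot) and $x \cdot \elmerge_n(r,k)$ for $0 \le k \le r-n$ (merging feet $k, k+1, \dots, k+n-1$ into one), where I use $\elmerge_n(r,k)$ for the elementary forest pair with $r$ feet total whose single nontrivial piece is an inverted $n$-caret absorbing feet $k$ through $k+n-1$. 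So the first step is to define the bijection on vertices: send the ``split the $k$th foot'' direction to the $0$-matching $\{v_k\}$ in $\Delta^n(r)$, and send the ``merge feet $k,\dots,k+n-1$'' direction to the $(n-1)$-simplex $e_{[k,k+n-1]}$ of $\Delta^n(r)$, viewed as a $0$-simplex of the matching complex. This is visibly a bijection between the $0$-cells of $\lk x$ and the $0$-cells of $\match_{\{0,n-1\}}(\Delta^n(r))$, since the latter's vertices are precisely the single vertices $v_k$ and the maximal simplices $e_{[k,k+n-1]}$.

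The second step is to show a set of these directions spans a simplex of $\lk x$ if and only if the corresponding matchings are pairwise disjoint, i.e., form a simplex of the $D$-matching complex. For this I would use the description of cofaces of $x$ from just before the lemma: a coface is $x\Psi$ with $\Psi = \gen{A_0,\dots,A_{\ell-1}}$, $A_i \in \{\elnothing,\elsplit_n,\elmerge_n\}$, subject to the bookkeeping constraint that $r$ equals the number of $\elnothing$'s and $\elsplit_n$'s plus $n$ times the number of $\elmerge_n$'s. The key observation is that a single generator $A_i$ is $\elsplit_n$ exactly when it splits one of the feet of $x$ into $n$, and is $\elmerge_n$ exactly when it merges a block of $n$ \emph{consecutive} feet of $x$ — and distinct generators act on disjoint blocks of feet of $x$. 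Thus a cube $x\Psi$ through $x$ corresponds to a choice of pairwise-disjoint operations, where each operation is either ``split foot $k$'' (using up foot $k$) or ``merge the block $k,\dots,k+n-1$'' (using up those $n$ feet); disjointness of the underlying blocks of feet is exactly disjointness of the associated $0$- and $(n-1)$-simplices of $\Delta^n(r)$. The link simplex spanned by a collection of vertices of $\lk x$ corresponds to the cube generated by the union of the associated operations, which exists as a coface of $x$ precisely when those operations use disjoint blocks of feet, i.e., precisely when the matchings are disjoint. One should also note why split-directions never conflict with merge-directions unless their feet overlap, and that two merges overlap as blocks iff their $(n-1)$-simplices in $\Delta^n(r)$ share a vertex (two $n$-element intervals of $\{0,\dots,r-1\}$ are disjoint as sets iff the intervals don't overlap), which is built into the definition of $\Delta^n(r)$.

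Having established that the vertex bijection carries simplices to simplices in both directions, the map is a simplicial isomorphism $\lk x \cong \match_{\{0,n-1\}}(\Delta^n(r))$, completing the proof. I would probably also remark that this identification is natural enough to be $F_n$-equivariant in the appropriate sense (the relabeling of feet under the $F_n$-action on $X_n$ is order-preserving), though strictly this is not needed for the statement.

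The step I expect to be the main obstacle — or at least the one requiring the most care — is the precise matching of higher simplices, specifically making airtight the claim that a set of directions out of $x$ spans a simplex of $\lk x$ exactly when the corresponding split/merge operations occur on pairwise disjoint blocks of feet. The subtlety is that a cube $\sigma = x\Psi$ centered at a \emph{different} vertex can look like a mix of splits and merges relative to $x$, so one must argue carefully (using the coface description and the feet-counting constraint) that every coface of $x$ decomposes uniquely as a disjoint union of elementary split/merge operations on the feet of $x$, and that this decomposition is exactly recorded by a $\{0,n-1\}$-matching on $\Delta^n(r)$. Once that normal form for cofaces of $x$ is nailed down, the rest is bookkeeping.
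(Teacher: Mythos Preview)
Your proposal is correct and follows essentially the same approach as the paper: define an explicit bijection between the elementary split/merge directions at $x$ and the $0$-simplices of $\match_{\{0,n-1\}}(\Delta^n(r))$, then observe that a collection of directions spans a simplex of $\lk x$ precisely when the corresponding operations act on disjoint blocks of feet, i.e., form a simplex of the matching complex. The only cosmetic difference is that the paper indexes via the tuple $\Psi=\gen{A_0,\dots,A_{\ell-1}}$ and introduces offset counters $m_i$ to translate tuple-position into foot-index, whereas you index directly by feet of $x$; your version is if anything more explicit about why the map respects higher simplices, since the paper simply asserts this is ``straightforward to check.''
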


\begin{proof}
 Define a map $g\colon \lk x \to \match_{\{0,n-1\}}(\Delta^n(r))$ as follows. For a coface $x\Psi$ with $\Psi=\gen{A_0,\dots,A_{\ell-1}}$, $g$ sends $x\Psi$ to a $\{0,n-1\}$-matching of $\Delta^n(r)$ where each $\elsplit_n$ is a $0$-simplex in $\Delta^n(r)$ and each $\elmerge_n$ is an $(n-1)$-simplex in $\Delta^n(r)$. More precisely, for each $0\le i\le \ell-1$, let $m_i$ be the number of $0\le j<i$ such that $A_j=\elmerge_n$, and then
 \[
 g(x\Psi) \defeq \{v_{k+(n-1)m_k},e_{[\ell+(n-1)m_\ell,\ell+(n-1)m_\ell+(n-1)]}\mid A_k=\elsplit_n \text{ and } A_\ell=\elmerge_n\} \text{.}
 \]
 For example, $g(x\gen{\elnothing,\elmerge_n,\elsplit_n})=\{e_{[1,n]},v_{n+1}\}$. It is straightforward to check that $g$ is a simplicial isomorphism.
\end{proof}

See Figure~\ref{fig:lk_model} for an example of the correspondence $\lk x \cong \match_{\{0,n-1\}}(\Delta^n(r))$.

\begin{figure}[htb]
 \begin{tikzpicture}\centering
  \draw (-0.5,0) -- (4.5,0) -- (2,2) -- (-0.5,0)   (0,0) -- (0,-1)   (1,0) -- (1,-1)   (2,0) -- (2,-1)   (3,0) -- (3,-1)   (4,0) -- (4,-1);
  \filldraw (0,-1) circle (1.5pt)   (2,-1) circle (1.5pt)   (4,-1) circle (1.5pt);
  \filldraw[white] (1,-1) circle (1.5pt)   (3,-1) circle (1.5pt);
  \draw (1,-1) circle (1.5pt)   (3,-1) circle (1.5pt);
  \draw[red,dashed] (1,-1) -- (2,-2) -- (3,-1)   (2,-2) -- (2,-1);
  \draw[blue,dashed] (3.5,-2) -- (4,-1) -- (4.5,-2)   (4,-1) -- (4,-2);
  \node at (2,0.75) {$x$};
  \node at (5.5,0) {$\mapsto$};
  
  \begin{scope}[xshift=6.5cm]
   \filldraw[lightgray] (0,0) -- (4,0) -- (3,1) -- (1,1);
   \filldraw[red] (1,1) -- (2,0) -- (3,1);
   \draw (0,0) -- (1,1) -- (2,0) -- (3,1) -- (4,0)   (0,0) -- (4,0)   (1,1) -- (3,1);
   \filldraw (0,0) circle (1.5pt)   (2,0) circle (1.5pt);
   \filldraw[blue] (4,0) circle (2.5pt);
   \filldraw[white] (1,1) circle (1.5pt)   (3,1) circle (1.5pt);
   \draw (1,1) circle (1.5pt)   (3,1) circle (1.5pt);
  \end{scope}

 \end{tikzpicture}
 \caption{The example $x\gen{\elnothing,\elmerge_3,\elsplit_3} \mapsto \{e_{[1,3]},v_4\}$ from the proof of Lemma~\ref{lem:vertex_link}. The $\elmerge_3$ and $e_{[1,3]}$ are in red and the $\elsplit_3$ and $v_4$ are in blue.}\label{fig:lk_model}
\end{figure}

For a vertex $x \in X_n$ recall that $f(x)$ denotes its number of feet. The function $f$ extends to an affine Morse function on $X_n$ (Observation~\ref{obs:feet_affine}). Viewing $\lk x$ as $\match_{\{0,n-1\}}(\Delta^n(r))$ for $x$ with $f(x)=r$, the ascending link of $x$ with respect to $f$ is $\match_0(\Delta^n(r))$ and the descending link is $\match_{n-1}(\Delta^n(r))$.

\begin{corollary}[$f$-ascending/descending links]\label{cor:f_lks}
 For $x$ a vertex with $f(x)=r$, $\lk^{f\uparrow}_{X_n} x$ is contractible and $\lk^{f\downarrow}_{X_n} x$ is $(\lfloor\frac{r-n}{2n-1}\rfloor-1)$-connected.
\end{corollary}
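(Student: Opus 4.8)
The plan is to read off both statements directly from the link model (Lemma~\ref{lem:vertex_link}) together with the description of ascending and descending links with respect to $f$ given just before the corollary, and then invoke the connectivity results already proved. Concretely, $\lk x \cong \match_{\{0,n-1\}}(\Delta^n(r))$, and a coface $x\Psi$ lies in the ascending link of $x$ (with respect to the Morse function $f$) precisely when $x$ is the vertex of $x\Psi$ with minimal number of feet; since a split increases the number of feet and a merge decreases it, this happens exactly when $\Psi$ uses only $\elsplit_n$'s and $\elnothing$'s, i.e., when the corresponding matching is a $0$-matching. Hence $\lk^{f\uparrow}_{X_n} x \cong \match_0(\Delta^n(r))$. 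Dually, the descending link corresponds to cofaces obtained by merging only, i.e., to $(n-1)$-matchings, so $\lk^{f\downarrow}_{X_n} x \cong \match_{n-1}(\Delta^n(r))$.

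For the ascending link, the claim is that $\match_0(\Delta^n(r))$ is contractible. By definition a $0$-matching in $\Delta^n(r)$ is just \emph{any} collection of vertices $v_k$ (pairwise disjoint is automatic for distinct $0$-simplices), so $\match_0(\Delta^n(r))$ is the full simplex on the $r$ vertices $v_0,\dots,v_{r-1}$, which is contractible. (If one prefers, it is a cone on any vertex.) This handles the first assertion.

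For the descending link, $\lk^{f\downarrow}_{X_n} x \cong \match_{n-1}(\Delta^n(r))$, and Lemma~\ref{lem:top_matching_conn} states exactly that $\match_{n-1}(\Delta^n(r))$ is $(\lfloor\frac{r-n}{2n-1}\rfloor-1)$-connected. So the second assertion is immediate. In writing this up I would spell out the identification of ascending/descending links with $\match_0$ and $\match_{n-1}$ carefully using the $\elsplit_n$/$\elmerge_n$ bookkeeping from Lemma~\ref{lem:vertex_link}, since that is the only place any content enters; everything else is a citation of Lemma~\ref{lem:top_matching_conn} and the trivial observation that a $0$-matching complex is a full simplex.

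The only potential subtlety — and hence the ``main obstacle,'' though it is minor — is making sure the direction of the Morse function is right: that $f$ \emph{increasing} corresponds to splitting feet, so that ascending links are built from splits ($0$-simplices) and descending links from merges ($(n-1)$-simplices), and not the reverse. This is pinned down by the remark immediately preceding the corollary, so the proof is essentially a one-line dereference once that identification is recalled.

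\begin{proof}
 By Lemma~\ref{lem:vertex_link} we have $\lk_{X_n} x \cong \match_{\{0,n-1\}}(\Delta^n(r))$, where an $\elsplit_n$ corresponds to a $0$-simplex of $\Delta^n(r)$ and an $\elmerge_n$ to an $(n-1)$-simplex. A coface $x\Psi$ lies in the ascending star of $x$ with respect to $f$ exactly when $x$ has the minimal number of feet among the vertices of $x\Psi$; since splitting a foot increases $f$ and merging feet decreases $f$, this occurs precisely when $\Psi$ involves only $\elnothing$'s and $\elsplit_n$'s, i.e., when the corresponding matching consists only of $0$-simplices. Thus $\lk^{f\uparrow}_{X_n} x \cong \match_0(\Delta^n(r))$. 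Dually, $\lk^{f\downarrow}_{X_n} x \cong \match_{n-1}(\Delta^n(r))$.

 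Now $\match_0(\Delta^n(r))$ is the complex whose simplices are arbitrary subsets of the vertex set $\{v_0,\dots,v_{r-1}\}$ (any two distinct $0$-simplices are automatically disjoint), so it is the full $(r-1)$-simplex and in particular contractible. This proves that $\lk^{f\uparrow}_{X_n} x$ is contractible. Finally, by Lemma~\ref{lem:top_matching_conn}, $\match_{n-1}(\Delta^n(r))$ is $(\lfloor\frac{r-n}{2n-1}\rfloor-1)$-connected, so $\lk^{f\downarrow}_{X_n} x$ is $(\lfloor\frac{r-n}{2n-1}\rfloor-1)$-connected.
\end{proof}
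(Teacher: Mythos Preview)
Your proof is correct and follows essentially the same approach as the paper's: identify $\lk^{f\uparrow}_{X_n} x \cong \match_0(\Delta^n(r))$ (an $(r-1)$-simplex, hence contractible) and $\lk^{f\downarrow}_{X_n} x \cong \match_{n-1}(\Delta^n(r))$, then invoke Lemma~\ref{lem:top_matching_conn}. You simply spell out in more detail the split/merge bookkeeping that the paper states in the sentence immediately preceding the corollary.
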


\begin{proof}
 We have $\lk^{f\uparrow}_{X_n} x \cong \match_0(\Delta^n(r))$, which is an $(r-1)$-simplex, hence contractible. We have $\lk^{f\downarrow}_{X_n} x \cong \match_{n-1}(\Delta^n(r))$, which is $(\lfloor\frac{r-n}{2n-1}\rfloor-1)$-connected by Lemma~\ref{lem:top_matching_conn}.
\end{proof}

In Section~\ref{sec:computations} we will need a subcomplex of the form $X_n^{p \le f \le q}$ that is $(m-1)$-connected. It will be convenient to have one of the form $X_n^{p \le f \le pn^2}$.

\begin{lemma}\label{lem:sublevel_conn}
 For any $p\ge m$ the complex $X_n^{p \le f \le pn^2}$ is $(m-1)$-connected.
\end{lemma}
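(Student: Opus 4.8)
The goal is to show that $X_n^{p \le f \le pn^2}$ is $(m-1)$-connected whenever $p \ge m$. The natural strategy is to start from the whole complex $X_n$, which is contractible (hence $(m-1)$-connected), and cut it down to the slab $X_n^{p \le f \le pn^2}$ in two stages, using the Morse Lemma (Lemma~\ref{lem:morse}) with the number-of-feet function $f$ as height and secondary height $0$, as justified by Observation~\ref{obs:feet_affine}. First I would pass from $X_n$ to the sublevel set $X_n^{f \le pn^2}$, and then from there down to $X_n^{p \le f \le pn^2}$; in both stages the relevant links are computed by Corollary~\ref{cor:f_lks}, so the connectivity bound $\lfloor\frac{r-n}{2n-1}\rfloor - 1$ for descending $f$-links, and contractibility of ascending $f$-links, are the only inputs needed.

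\textbf{Stage 1: cutting off the top.}
Apply the ascending part of the Morse Lemma with $p' = -\infty$, $q' = pn^2$, $r' = \infty$: for every vertex $v$ with $f(v) < pn^2$ the ascending link $\lk^{f\uparrow}_{X_n} v$ is contractible (Corollary~\ref{cor:f_lks}), in particular $(m-1)$-connected, so the pair $(X_n, X_n^{pn^2 \le f})$ is $m$-connected. Wait --- that cuts off the \emph{bottom}; to cut off the top I instead run the ascending links with the roles flipped, i.e.\ note that cutting off the top is handled by the \emph{descending}-link half applied to $-f$, equivalently by observing directly that for every vertex $v$ with $f(v) > pn^2$ one may use the contractibility of ascending $f$-links of vertices \emph{below} $pn^2$ together with the Morse Lemma's ascending statement $(Y^{p\le h\le r}, Y^{q\le h\le r})$ with $p=-\infty$, $q=pn^2$, $r=\infty$: since all ascending $f$-links are contractible, the inclusion $X_n^{pn^2\le f} \hookrightarrow X_n$ is a homotopy equivalence through dimension $m$, hence we want instead the sublevel set $X_n^{f\le pn^2}$. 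This is obtained by the \emph{descending}-link half: for every vertex $v$ with $f(v) > pn^2$, the descending link $\lk^{f\downarrow}_{X_n} v \cong \match_{n-1}(\Delta^n(r))$ with $r = f(v) > pn^2 \ge n^2$ is $(\lfloor\frac{r-n}{2n-1}\rfloor - 1)$-connected, and since $r > pn^2$ one checks $\lfloor\frac{r-n}{2n-1}\rfloor \ge \lfloor\frac{pn^2 - n}{2n-1}\rfloor \ge p \ge m$ (using $pn^2 - n \ge p(2n-1)$ for $n\ge 2$, which reduces to $n^2 - 2n + 1 \ge 0$), so the descending link is $(m-1)$-connected. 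Hence $(X_n, X_n^{f\le pn^2})$ is $m$-connected, and since $X_n$ is contractible, $X_n^{f\le pn^2}$ is $(m-1)$-connected.

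\textbf{Stage 2: cutting off the bottom.}
Now apply the descending part of the Morse Lemma inside $Y = X_n^{f \le pn^2}$ with $p' = -\infty$, $q' = p$, $r' = pn^2$, relative to the ambient complex $X_n^{p\le f}$... more cleanly: for every vertex $v \in X_n^{p < f \le pn^2}$, the descending link $\lk^{f\downarrow}_{X_n^{-\infty\le f}} v = \lk^{f\downarrow}_{X_n} v \cong \match_{n-1}(\Delta^n(r))$ with $r = f(v) > p$; but $r > p$ only gives $r \ge p + (n-1)$, which is not enough. The fix is to cut off the bottom using the \emph{ascending} links instead: for every vertex $v$ with $f(v) < p$, the ascending link $\lk^{f\uparrow}_{X_n^{f\le pn^2}} v$ --- which is $\lk^{f\uparrow}_{X_n} v \cong \match_0(\Delta^n(r))$ intersected with $f \le pn^2$, and since adding carets only raises $f$ by $n-1$ each and we have room up to $pn^2$, this is still the full $(r-1)$-simplex or at worst a high-dimensional skeleton of it; in any case it is $(m-1)$-connected. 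Then the ascending half of the Morse Lemma with $p'=-\infty$, $q'=p$, $r'=pn^2$ gives that $(X_n^{f\le pn^2}, X_n^{p\le f\le pn^2})$ is $m$-connected, and combined with Stage 1 we conclude $X_n^{p\le f\le pn^2}$ is $(m-1)$-connected.

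\textbf{The main obstacle.}
The delicate point is Stage 2: the ascending $f$-link of a low vertex $v$ inside the \emph{truncated} complex $X_n^{f\le pn^2}$ is no longer automatically the full simplex $\match_0(\Delta^n(r))$, because a simultaneous split of many feet may push $f(v)$ above $pn^2$. One must check that enough of the simplex survives to remain $(m-1)$-connected --- concretely, that from a vertex with $f(v) = r < p$ one can still perform at least $\lceil m/1\rceil$-ish independent splits within the budget $pn^2 - r$, which holds comfortably since each split costs $n-1$ and $pn^2 - r > pn^2 - p = p(n^2-1)$ is enormous compared to $m \le p$; so the surviving subcomplex contains a full simplex on many vertices and is in fact contractible. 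Making this counting argument precise (and similarly confirming the arithmetic $\lfloor\frac{pn^2-n}{2n-1}\rfloor \ge p$ in Stage 1) is the only real content; everything else is a direct two-step application of Lemma~\ref{lem:morse} and Corollary~\ref{cor:f_lks}.
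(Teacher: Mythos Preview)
Your two-stage strategy is exactly the paper's: first use Corollary~\ref{cor:morse} with $f$-descending links to show $X_n^{f\le pn^2}$ is $(m-1)$-connected, then remove the vertices with $f<p$ using contractibility of $f$-ascending links.

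Two corrections are needed. In Stage~1 your arithmetic slips: the inequality $pn^2-n\ge p(2n-1)$ rearranges to $p(n-1)^2\ge n$, \emph{not} to $(n-1)^2\ge 0$, and it actually fails for $p=m=1$, $n=2$. The repair is to keep the ``$+1$'' coming from the strict inequality $r>pn^2$ (so $r\ge pn^2+1\ge mn^2+1$); then one needs $mn^2-n+1\ge m(2n-1)$, equivalently $m(n-1)\ge 1$, which holds for $n\ge 2$. This is precisely the computation the paper does (phrased via the roots of $mx^2-(2m+1)x+(m+1)$).

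In Stage~2 the truncation worry is a red herring. A vertex with $r<p$ feet has its \emph{entire} $f$-ascending star contained in $X_n^{f\le pn^2}$: splitting every foot yields $rn<pn\le pn^2$ feet, and that is the maximum $f$-value occurring in the star. Hence the ascending link inside the slab is the full $(r-1)$-simplex $\match_0(\Delta^n(r))$, contractible on the nose; no counting of ``how many independent splits fit in the budget'' is required. The paper states exactly this one-line observation and concludes that $X_n^{1\le f\le pn^2}$ and $X_n^{p\le f\le pn^2}$ are homotopy equivalent. With these two fixes your argument coincides with the paper's.
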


\begin{proof}
 We first claim that $X_n^{f \le pn^2}$ is $(m-1)$-connected. By the Morse Lemma (specifically Corollary~\ref{cor:morse}) it suffices to show that for any vertex $x$ with $f(x)>pn^2$, the $f$-descending link $\lk^{f\downarrow}_{X_n} x$ is $(m-1)$-connected. Setting $r=f(x)$, we know from Corollary~\ref{cor:f_lks} that the $f$-descending link is $(\lfloor\frac{r-n}{2n-1}\rfloor-1)$-connected. Since $r\ge pn^2+1 \ge mn^2+1$, this is $(\lfloor\frac{mn^2-n+1}{2n-1}\rfloor-1)$-connected. To see that $mn^2-n+1 \ge m(2n-1)$ (which now suffices), we note that the roots of the polynomial $mx^2+(-2m-1)x+(m+1)$ are $1$ and $1+\frac{1}{m}$.

 Now we pass from $X_n^{1 \le f \le pn^2}$ to $X_n^{p \le f \le pn^2}$. In fact these are homotopy equivalent, since ascending links of vertices with respect to $f$ are contractible (Corollary~\ref{cor:f_lks}), and for a vertex with fewer than $p$ feet, the entire ascending link is contained in $X_n^{1 \le f \le pn^2}$.
\end{proof}

\begin{observation}\label{obs:cocompact}
 For $p,q\in\N$, the action of $F_n$ on $X_n^{p \le f \le q}$ is cocompact.
\end{observation}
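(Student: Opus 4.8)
The plan is to prove the (slightly stronger) statement that $F_n$ acts on $X_n^{p\le f\le q}$ with only finitely many orbits of cubes; since $X_n^{p\le f\le q}$ is a subcomplex of the cube complex $X_n$, this is exactly the assertion that the quotient is a finite, hence compact, cube complex. First I would record that the number-of-feet function $f$ is constant on $F_n$-orbits: an element of $F_n$ has one foot, so left-multiplying a vertex $x\in\calP_1$ by it does not change the number of feet (this is just the groupoid fact that composition does not alter the target of the right factor). In particular $X_n^{p\le f\le q}$ really is an $F_n$-invariant subcomplex. Next I would invoke the structure theory of cubes from Section~\ref{sec:stein_farley}: every cube $\sigma$ has a unique vertex $x$ with fewest feet, and $\sigma=x\gen{A_0,\dots,A_{r-1}}$ for $r=f(x)$ and a unique choice of $A_i\in\{\elnothing,\elsplit_n\}$; its remaining vertices are obtained from $x$ by multiplying by subforests, so they have between $r$ and $r+(n-1)d$ feet, where $d=\#\{i:A_i=\elsplit_n\}$ is the dimension of $\sigma$. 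Since $X_n^{p\le f\le q}$ is a full subcomplex, $\sigma$ lies in it precisely when $p\le r$ and $r+(n-1)d\le q$; in particular $p\le r\le q$.

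The key step — and really the only one carrying any content — is the observation that $F_n$ acts transitively on the set of vertices of $X_n$ having any prescribed number of feet. This is immediate from the groupoid structure on $\calP$: given $x,x'\in\calP_1$ with $f(x)=f(x')=r$, the inverse $x^{-1}$ has $r$ heads and one foot, so $g\defeq x'\cdot x^{-1}$ is a well-defined element of $\calP$ with one head and one foot, i.e.\ $g\in F_n$, and by associativity $g\cdot x=(x'\cdot x^{-1})\cdot x=x'\cdot(x^{-1}\cdot x)=x'$ because $x^{-1}\cdot x$ is an identity morphism of $\calP$. Since $F_n$ acts on $X_n$ by cubical automorphisms and $g\cdot(x\gen{A_0,\dots,A_{r-1}})=(gx)\gen{A_0,\dots,A_{r-1}}$, with $gx$ again the fewest-feet vertex of this cube, it follows that the $F_n$-orbit of a cube $x\gen{A_0,\dots,A_{r-1}}$ is completely determined by the pair consisting of $r=f(x)$ and the pattern $(A_0,\dots,A_{r-1})$.

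It then remains only to count. For a cube contained in $X_n^{p\le f\le q}$ we have seen that $r$ lies in $[p,q]$, so $r$ ranges over only finitely many values, and for each such $r$ there are just $2^r$ patterns $(A_0,\dots,A_{r-1})\in\{\elnothing,\elsplit_n\}^r$; hence there are at most $\sum_{r=p}^{q}2^r$ orbits of cubes, which is finite. Therefore $F_n\backslash X_n^{p\le f\le q}$ is a finite cube complex and the action is cocompact. I do not expect any genuine obstacle: all of the weight is carried by the transitivity statement, which is a one-line consequence of the groupoid axioms, and everything else is the bookkeeping of classifying cubes by their fewest-feet vertex and caret pattern.
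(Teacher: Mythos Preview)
Your proof is correct and follows essentially the same approach as the paper: the core step in both is that $F_n$ acts transitively on vertices with any given number of feet, which you prove via the groupoid inverse exactly as intended. The only cosmetic difference is that the paper then finishes in one clause by invoking local compactness of $X_n$ (finitely many vertex orbits plus locally finite implies finitely many cube orbits), whereas you spell this out by explicitly parametrizing cube orbits by $(r,(A_0,\dots,A_{r-1}))$ and counting---this is a bit more detailed but not a genuinely different argument.
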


\begin{proof}
 For each $r$, $F_n$ acts transitively on vertices with $r$ feet. The result is thus immediate since $X_n$ is locally compact.
\end{proof}

\medskip

In particular, we now have highly connected spaces on which our groups act freely and cocompactly, which is part of the setup for Definition~\ref{def:bnsr}. To compute the $\Sigma$-invariants using Morse theory, we will use our knowledge of how characters vary between adjacent vertices (Lemma~\ref{lem:vary_chars}). Since we are modeling vertex links by $\{0,n-1\}$-matching complexes on $\Delta^n(r)$, we need to translate Lemma~\ref{lem:vary_chars} into the language of $\{0,n-1\}$-matchings.

\begin{definition}
 Let $\chi$ be a character of $F_n$, extended to $X_n$ as in Lemma~\ref{lem:affine_extend}. Let $x\in X_n$ be a vertex with $r=f(x)$ feet, so $\lk x \cong \match_{\{0,n-1\}}(\Delta^n(r))$. Under this isomorphism, call a vertex of $\match_{\{0,n-1\}}(\Delta^n(r))$ \emph{$\chi$-ascending} if the corresponding vertex $y$ in $\lk x$ has $\chi(y)>\chi(x)$. Analogously define \emph{$\chi$-descending} and \emph{$\chi$-preserving}. Say a simplex $\mu$ in $\match_{\{0,n-1\}}(\Delta^n(r))$ is $\chi$-ascending/descending/preserving if all its vertices are.
\end{definition}

\begin{observation}[Ascending matching complexes]\label{obs:asc_lks_matchings}
 Let $(\chi,f) \colon X_n^{p\le f\le q} \to \R\times \R$ be a Morse function as in Proposition~\ref{prop:char_morse}. Let $x$ be a vertex in $X_n^{p\le f\le q}$ with $r=f(x)$ feet. Then the $(\chi,f)$-ascending link of $x$ in $X_n$ is isomorphic to the full subcomplex of $\match_{\{0,n-1\}}(\Delta^n(r))$ supported on those $0$-simplices $v_k$ and $e_{[k,k+(n-1)]}$ such that $v_k$ is either $\chi$-ascending or $\chi$-preserving, and $e_{[k,k+(n-1)]}$ is $\chi$-ascending. The $(\chi,f)$-ascending link of $x$ in $X_n^{p\le f\le q}$ is then obtained by removing any $\{0,n-1\}$-matchings $\mu$ such that $r+(n-1)\mu_0>q$ or $r-(n-1)\mu_{n-1}<p$, where $\mu_i$ is the number of vertices of $\mu$ that are $i$-matchings.
\end{observation}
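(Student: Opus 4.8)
The plan is to read the statement off from Lemma~\ref{lem:vertex_link} together with Lemmas~\ref{lem:vary_chars} and~\ref{lem:affine_extend}, essentially translating the definition of the ascending link into the language of $\{0,n-1\}$-matchings. First I would fix the dictionary. Under the isomorphism $g\colon\lk x\to\match_{\{0,n-1\}}(\Delta^n(r))$ of Lemma~\ref{lem:vertex_link}, a $0$-simplex $v_k$ is the coface of $x$ obtained by splitting the $k$th foot, and a $0$-simplex $e_{[k,k+(n-1)]}$ is the coface obtained by merging the feet $k,\dots,k+(n-1)$ of $x$; a split raises $f$ by $n-1$ and a merge lowers $f$ by $n-1$ (Observation~\ref{obs:feet_affine}). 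By Lemma~\ref{lem:vary_chars} each such elementary move changes $\chi$ by a fixed integer, and by Lemma~\ref{lem:affine_extend} $\chi$ is affine on each cube of $X_n$; choosing affine coordinates on a cube $\sigma\ni x$ with $x$ at the origin, the value of $\chi$ at any vertex of $\sigma$ equals $\chi(x)$ plus the sum of the $\chi$-changes along the edges of $\sigma$ incident to $x$ that are traversed to reach that vertex, and the same additivity holds for $f$.

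The crux is the following description of the ascending star: a cube $\sigma$ containing $x$, corresponding under $g$ to a $\{0,n-1\}$-matching $\mu$, has $x$ as its vertex of minimal refined height if and only if every $0$-simplex $v_k$ of $\mu$ is $\chi$-ascending or $\chi$-preserving and every $0$-simplex $e_{[k,k+(n-1)]}$ of $\mu$ is $\chi$-ascending. For the forward implication I would examine the edge-neighbors of $x$ inside $\sigma$: a $\chi$-descending split yields a neighbor of strictly smaller $\chi$, and a $\chi$-preserving or $\chi$-descending merge yields a neighbor with $\chi$ no larger but strictly fewer feet, so in either case a neighbor of strictly smaller refined height, contradicting minimality. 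For the converse I would use the additivity above: every vertex of $\sigma$ other than $x$ is obtained by performing a nonempty set $S$ of the elementary moves indexed by $\mu$, each of which is a $\chi$-ascending split, a $\chi$-ascending merge, or a $\chi$-preserving split; the total $\chi$-change is a sum of nonnegative terms, hence either strictly positive---so that vertex has strictly larger refined height---or zero, in which case every move in $S$ is a $\chi$-preserving split and so $f$ strictly increases, again raising the refined height. Thus $x$ is the unique minimum of $\sigma$.

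Granting this, the ascending star of $x$ in $X_n$ corresponds under $g$ exactly to the cubes indexed by the matchings of the full subcomplex of $\match_{\{0,n-1\}}(\Delta^n(r))$ spanned by the $\chi$-ascending-or-preserving $0$-simplices $v_k$ and the $\chi$-ascending $0$-simplices $e_{[k,k+(n-1)]}$, and passing to the link of $x$ produces precisely that full subcomplex, which is the first assertion. (Fullness also follows abstractly from affineness of $(\chi,f)$.) For the relative statement, $X_n^{p\le f\le q}$ is a full subcomplex of $X_n$, so a cube $\sigma\ni x$ lies in it exactly when every vertex of $\sigma$ has $f$-value in $[p,q]$; among the vertices of the cube indexed by $\mu$ the $f$-values range from $r-(n-1)\mu_{n-1}$ (perform every merge and no split) up to $r+(n-1)\mu_0$ (perform every split and no merge), so this cube lies in $X_n^{p\le f\le q}$ iff $r+(n-1)\mu_0\le q$ and $r-(n-1)\mu_{n-1}\ge p$, which gives the stated deletion rule.

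I expect the only step carrying real content to be the converse direction of the ascending-star description: one must rule out that a combination of individually admissible elementary moves could fail to raise the refined height. This is exactly where affineness of $\chi$ on cubes (Lemma~\ref{lem:affine_extend}) is used, via the observation that in the ascending regime splits never decrease $\chi$ and never decrease $f$ while admissible merges strictly increase $\chi$. Everything else is bookkeeping through the isomorphism $g$ of Lemma~\ref{lem:vertex_link} and the behavior of $f$ from Observation~\ref{obs:feet_affine}.
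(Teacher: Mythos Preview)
Your proposal is correct and follows essentially the same route as the paper. The paper's own proof is much terser: it simply notes that a vertex of $\lk x$ is $(\chi,f)$-ascending iff it increases $\chi$, or preserves $\chi$ and increases $f$, and then reads off the two cases from whether the vertex is a split $v_k$ (which increases $f$) or a merge $e_{[k,k+(n-1)]}$ (which decreases $f$); fullness of the ascending link is not reproved here because it was already recorded in the Morse-theoretic setup (``since $h$ and $s$ are affine, ascending and descending links are full subcomplexes''). Your longer argument, showing directly via additivity that $x$ is the minimum of a cube iff each elementary move in $\mu$ satisfies the stated condition, is exactly an unpacking of that general fullness statement in this specific situation, so the content is the same.
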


\begin{proof}
 To increase $(\chi,f)$, we must either increase $\chi$ or else preserve $\chi$ and increase $f$. The $v_k$ correspond to vertices in $\lk x$ with $r+(n-1)$ feet, and the $e_{[k,k+(n-1)]}$ to vertices in $\lk x$ with $r-(n-1)$ feet. Hence the first claim follows. For the second claim, just note that $\mu$ corresponds to a simplex in $\lk x$, and hence to a cube in $X_n$ containing $x$, and $r+(n-1)\mu_0$ is the maximum number of feet of a vertex in that cube; similarly $r-(n-1)\mu_{n-1}$ is the minimum number of feet of a vertex in that cube.
\end{proof}

\begin{corollary}\label{cor:char_match}
 If $k=0$ then $v_k$ is $\chi_0$-descending and $e_{[k,k+(n-1)]}$ is $\chi_0$-ascending. Otherwise they are both $\chi_0$-preserving. If $k=r-1$ then $v_k$ is $\chi_1$-descending and $e_{[k-(n-1),k]}$ is $\chi_1$-ascending. Otherwise they are both $\chi_1$-preserving. If $0<k$ and $k-1\equiv_{n-1} i$, then $v_k$ is $\psi_i$-ascending and $e_{[k,k+(n-1)]}$ is $\psi_i$-descending. If $k<r-1$ and $k\equiv_{n-1} i$, then $v_k$ is $\psi_i$-descending and $e_{[k-(n-1),k]}$ is $\psi_i$-ascending. Anything not covered by these cases is $\psi_i$-preserving. In all of these cases, ``ascending'' entails an increase by $+1$ and ``descending'' entails a decrease by $-1$.
\end{corollary}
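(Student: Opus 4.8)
The plan is to derive Corollary~\ref{cor:char_match} as a direct translation of Lemma~\ref{lem:vary_chars} through the isomorphism $g\colon\lk x\to\match_{\{0,n-1\}}(\Delta^n(r))$ of Lemma~\ref{lem:vertex_link}. Recall that under $g$ the $0$-simplex $v_k$ of $\Delta^n(r)$ corresponds to the vertex $x\cdot\elsplit_n(r,k)$ of $\lk x$ obtained by splitting the $k$th foot of $x$, while the $(n-1)$-simplex $e_{[a,a+(n-1)]}$ corresponds to the vertex $y$ of $\lk x$ obtained by merging the feet $a,a+1,\dots,a+(n-1)$ of $x$ into a single foot. Accordingly the proof splits into a ``split'' case and a ``merge'' case, and in each the asserted $\pm1$ magnitudes are automatic, since every change recorded in Lemma~\ref{lem:vary_chars} has absolute value at most $1$.

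For the $0$-simplices $v_k$: these are exactly the vertices $y=x\cdot\elsplit_n(r,k)$, so parts (i)--(ii) of Lemma~\ref{lem:vary_chars} give the stated $\chi_0$-behavior, parts (iii)--(iv) the $\chi_1$-behavior, and parts (v)--(vii) the $\psi_i$-behavior. For $n\ge3$ the hypotheses of (v) and (vi) are incompatible for a given $k$, since $k-1\not\equiv k\pmod{n-1}$, so each $v_k$ is unambiguously $\psi_i$-ascending, $\psi_i$-descending, or $\psi_i$-preserving; for $n=2$ there are no $\psi_i$, so there is nothing to check there.

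For the $(n-1)$-simplices $e_{[a,a+(n-1)]}$: the merged foot is the $a$th foot of $y$, so $x=y\cdot\elsplit_n(r',a)$ with $r'=f(y)=r-(n-1)$. Applying Lemma~\ref{lem:vary_chars} with the roles of $x$ and $y$ interchanged, with $(r,k)$ replaced by $(r',a)$, and negating the recorded change, one reads off: $\chi_0(y)-\chi_0(x)=1$ exactly when $a=0$; $\chi_1(y)-\chi_1(x)=1$ exactly when $a=r'-1$, i.e.\ $a+(n-1)=r-1$; and $\psi_i(y)-\psi_i(x)$ equal to $-1$ when $0<a$ and $a-1\equiv_{n-1}i$, to $+1$ when $a<r'-1$ and $a\equiv_{n-1}i$, and to $0$ otherwise.

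The one place requiring care --- the step I expect to be the main, and essentially the only real, obstacle, though it is routine --- is reconciling the two indexing conventions for a merge used in the statement. The corollary labels a merge by its leftmost foot, $e_{[k,k+(n-1)]}$, when the relevant character is ``sensitive on the left'' (this covers $\chi_0$, and the $\psi_i$-decrease, governed by $k-1\equiv_{n-1}i$), and by its rightmost foot, $e_{[k-(n-1),k]}$, when it is ``sensitive on the right'' ($\chi_1$, and the $\psi_i$-increase, governed by $k\equiv_{n-1}i$). I would therefore finish by substituting $a=k$ into the merge computations above to obtain the $e_{[k,k+(n-1)]}$ assertions, and $a=k-(n-1)$ --- so that $a+(n-1)=k$, the condition $a<r'-1$ becomes $k<r-1$, and $a\equiv_{n-1}i$ becomes $k\equiv_{n-1}i$ --- to obtain the $e_{[k-(n-1),k]}$ assertions. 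Anything not falling under one of these cases is $\psi_i$-preserving, and every change is by $0$ or $\pm1$, as claimed.
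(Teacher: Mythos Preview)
Your proposal is correct and follows essentially the same approach as the paper: identify $v_k$, $e_{[k,k+(n-1)]}$, and $e_{[k-(n-1),k]}$ with the appropriate neighboring vertices of $x$ via the isomorphism of Lemma~\ref{lem:vertex_link}, and then read off the claims from Lemma~\ref{lem:vary_chars}. The paper's proof is a terse two-line version of exactly this, simply recording the correspondences $v_k \leftrightarrow x\cdot[\elsplit_n(r,k),\id_{r+(n-1)}]$, $e_{[k,k+(n-1)]} \leftrightarrow x\cdot[\id_r,\elsplit_n(r-(n-1),k)]$, and $e_{[k-(n-1),k]} \leftrightarrow x\cdot[\id_r,\elsplit_n(r-(n-1),k-(n-1))]$ and citing Lemma~\ref{lem:vary_chars}; your more explicit handling of the merge case via role-reversal and of the two indexing conventions is a welcome elaboration but not a different argument.
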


\begin{proof}
 Translating to $\lk x$, $v_k$ corresponds to $x \cdot [\elsplit_n(r,k),\id_{r+(n-1)}]$, $e_{[k,k+(n-1)]}$ corresponds to $x \cdot [\id_r,\elsplit_n(r-(n-1),k)]$ and $e_{[k-(n-1),k]}$ corresponds to $x \cdot [\id_r,\elsplit_n(r-(n-1),k-(n-1))]$. Hence Lemma~\ref{lem:vary_chars} implies all of these facts.
\end{proof}

Note that in particular if $r-1>0$ then $v_{r-1}$ is $\psi_i$-preserving for all $0\le i\le n-3$, since $r-2\equiv_{n-1} n-2$.

Some examples of $\psi_i$-ascending, descending or preserving $0$-simplices in $\match_{\{0,2\}}(\Delta_3(5))$, as governed by Corollary~\ref{cor:char_match}, are shown in Figure~\ref{fig:char_match}.

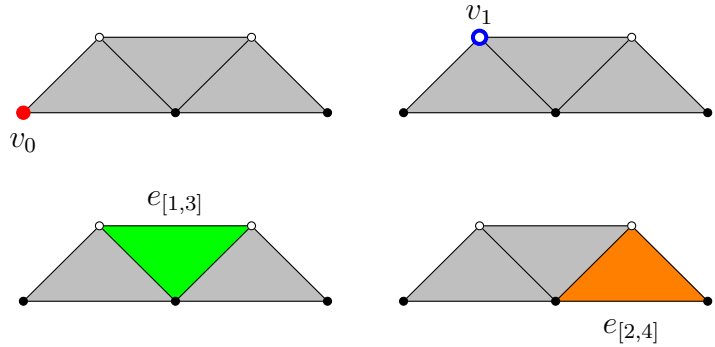
\begin{figure}[htb]
 \begin{tikzpicture}\centering
  \filldraw[lightgray] (0,0) -- (4,0) -- (3,1) -- (1,1);
  \draw (0,0) -- (1,1) -- (2,0) -- (3,1) -- (4,0)   (0,0) -- (2,0) -- (4,0)   (1,1) -- (3,1);
  \filldraw[red] (0,0) circle (2.5pt); \filldraw (2,0) circle (1.5pt)   (4,0) circle (1.5pt);
  \filldraw[white] (1,1) circle (1.5pt)   (3,1) circle (1.5pt);
  \draw (1,1) circle (1.5pt)   (3,1) circle (1.5pt);
  \node at (0,-.4) {$v_0$};
  
  \begin{scope}[xshift=5cm]
   \filldraw[lightgray] (0,0) -- (4,0) -- (3,1) -- (1,1);
   \draw (0,0) -- (1,1) -- (2,0) -- (3,1) -- (4,0)   (0,0) -- (2,0) -- (4,0)   (1,1) -- (3,1);
   \filldraw (0,0) circle (1.5pt)   (2,0) circle (1.5pt)   (4,0) circle (1.5pt);
   \filldraw[white] (1,1) circle (2.5pt)   (3,1) circle (1.5pt);
   \draw[blue,line width=1.5pt] (1,1) circle (2.5pt); \draw (3,1) circle (1.5pt);
   \node at (1,1.3) {$v_1$};
  \end{scope}

  \begin{scope}[yshift=-2.5cm]
   \filldraw[lightgray] (0,0) -- (4,0) -- (3,1) -- (1,1);
   \filldraw[green] (1,1) -- (2,0) -- (3,1);
   \draw (0,0) -- (1,1) -- (2,0) -- (3,1) -- (4,0)   (0,0) -- (2,0) -- (4,0)   (1,1) -- (3,1);
   \filldraw (0,0) circle (1.5pt)   (2,0) circle (1.5pt)   (4,0) circle (1.5pt);
  \filldraw[white] (1,1) circle (1.5pt)   (3,1) circle (1.5pt);
  \draw (1,1) circle (1.5pt)   (3,1) circle (1.5pt);
   \node at (2,1.3) {$e_{[1,3]}$};
  \end{scope}

  \begin{scope}[yshift=-2.5cm,xshift=5cm]
   \filldraw[lightgray] (0,0) -- (4,0) -- (3,1) -- (1,1);
   \filldraw[orange] (2,0) -- (3,1) -- (4,0);
   \draw (0,0) -- (1,1) -- (2,0) -- (3,1) -- (4,0)   (0,0) -- (2,0) -- (4,0)   (1,1) -- (3,1);
   \filldraw (0,0) circle (1.5pt)   (2,0) circle (1.5pt)   (4,0) circle (1.5pt);
  \filldraw[white] (1,1) circle (1.5pt)   (3,1) circle (1.5pt);
  \draw (1,1) circle (1.5pt)   (3,1) circle (1.5pt);
   \node at (3,-.4) {$e_{[2,4]}$};
  \end{scope}

 \end{tikzpicture}
 \caption{Some $0$-simplices in $\match_{\{0,2\}}(\Delta^3(5))$. First is $v_0$ (red), which is $\psi_0$-descending and $\psi_1$-preserving; then $v_1$ (blue) is $\psi_0$-ascending and $\psi_1$-descending; next $e_{[1,3]}$ (green) is $\psi_0$-descending and $\psi_1$-ascending; finally $e_{[2,4]}$ (orange) is $\psi_0$-preserving and $\psi_1$-descending.}\label{fig:char_match}
\end{figure}


\section{Proof of theorem~A}\label{sec:computations}

In this section we prove Theorem~A, that $\Sigma^m(F_n)=\Sigma^2(F_n)$ for all $n,m\ge 2$. The forward inclusion always holds, so the work to do is the reverse inclusion. Throughout this section, $\chi$ is a character of $F_n$ with $[\chi]\in\Sigma^2(F_n)$.

For the first three lemmas, we will make use of a certain ascending HNN-extension of $F_n$. (We should mention that there is nothing novel here, and the reduction done over the course of these three lemmas was already contained in the work of Kochloukova \cite{kochloukova12}.) Let $F_n(1)$ be the subgroup of $F_n$ generated by the $x_i$ for $i>0$ (see Subsection~\ref{sec:presentation}). It is well known that $F_n=F_n(1)*_{x_0}$ and $F_n(1)\cong F_n$.

\begin{lemma}\label{lem:poles}
 If $\chi=-\chi_i$ for $i=0,1$ then $[\chi]\in\Sigma^\infty(F_n)$.
\end{lemma}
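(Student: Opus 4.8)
The plan is to exploit the ascending HNN structure $F_n = F_n(1) *_{x_0}$ together with the fact that $-\chi_0$ and $-\chi_1$ are, up to the symmetry $x\mapsto 1-x$ of the homeomorphism picture, the ``stable letter'' characters of such an ascending HNN-extension. Concretely, $\chi_0$ is (the negative of) the character sending $x_0\mapsto 1$ and killing $F_n(1)$; so $-\chi_0$ is exactly the character dual to the stable letter of the ascending HNN-extension with base $F_n(1)\cong F_n$. I would first reduce to a single case: the map $x\mapsto 1-x$ induces an automorphism of $F_n$ interchanging the roles of the two ends of $[0,1]$, hence interchanging $\chi_0$ and $\chi_1$ (and permuting the $\psi_i$), so it suffices to prove $[-\chi_0]\in\Sigma^\infty(F_n)$.

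Next I would invoke the known behavior of $\Sigma$-invariants for ascending HNN-extensions: if $G = B*_{\phi}$ is an ascending HNN-extension of a group $B$ of type $\F_\infty$ with associated (injective) endomorphism $\phi\colon B\to B$, and $t$ is the stable letter, then the character $\chi$ with $\chi(t)=1$ and $\chi|_B = 0$ satisfies $[\chi]\in\Sigma^\infty(G)$. This is a standard fact (it is essentially the statement that $B = \ker\chi$ behaves well, but more precisely one uses that the positive half-line direction is always in $\Sigma^\infty$ for an ascending HNN-extension because a Morse-theoretic/mapping-telescope argument shows the relevant sublevel sets are all highly connected). Here $B = F_n(1)\cong F_n$ is of type $\F_\infty$ by Brown, and the character $\chi$ with $\chi(x_0)=-1$, $\chi|_{F_n(1)}=0$ is precisely $-\chi_0$ (since $\chi_0(x_0)=1$ and $\chi_0$ kills the $x_i$, $i>0$, which one checks from the tree-pair description: multiplying by $x_i$ for $i>0$ does not change the depth of the $0$th leaf). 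So $[-\chi_0]\in\Sigma^\infty(F_n)$, and then by the symmetry above $[-\chi_1]\in\Sigma^\infty(F_n)$ too.

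The main obstacle is making precise the ascending-HNN input in a form that gives $\Sigma^\infty$ rather than just $\Sigma^1$ or $\Sigma^2$. There are two routes. The cleanest is to cite the literature: the direction of the stable letter of an ascending HNN-extension of an $\F_\infty$ group lies in $\Sigma^\infty$ — this appears (at least implicitly) in work on $\Sigma$-invariants of HNN-extensions and, in the Thompson-group context, in Kochloukova's paper \cite{kochloukova12}, which is exactly why the excerpt remarks that ``there is nothing novel here''. Alternatively, one can give a direct Morse-theoretic proof on $X_n$: the sublevel set $X_n^{t\le \chi}$ for $\chi=-\chi_0$ is, up to homotopy, built from copies of a contractible (or $\F_\infty$-type) space glued along the $x_0$-action, and one shows the inclusions $X_n^{t\le\chi}\hookrightarrow X_n^{s\le\chi}$ are highly connected by checking that ascending links with respect to $-\chi_0$ (in the matching-complex model of Section~\ref{sec:links_matchings}) are always highly connected — indeed Corollary~\ref{cor:char_match} shows $v_0$ is $\chi_0$-descending, hence $(-\chi_0)$-ascending, and this single always-available ascending vertex coning-off argument forces the ascending link to be contractible. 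I would present the short proof via this observation on $X_n$, since all the needed machinery (Morse Lemma~\ref{lem:morse}, the ascending-link model in Observation~\ref{obs:asc_lks_matchings}, Corollary~\ref{cor:char_match}) is already set up, noting the symmetry $\chi_0\leftrightarrow\chi_1$ handles the case $i=1$.
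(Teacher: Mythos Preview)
Your primary argument is exactly the paper's: reduce by symmetry to $i=0$, use $F_n=F_n(1)*_{x_0}$ with $F_n(1)\cong F_n$ of type $\F_\infty$, note that $-\chi_0$ vanishes on $F_n(1)$ and sends the stable letter to $1$, and cite the ascending-HNN result (the paper cites \cite[Theorem~2.1]{bieri10}). One slip: your two paragraphs disagree on signs. The first paragraph is correct ($\chi_0(x_0)=-1$, so $-\chi_0(x_0)=1$), while in the second you write ``$\chi(x_0)=-1$ \dots\ is precisely $-\chi_0$ (since $\chi_0(x_0)=1$)'', which is backwards; this should be fixed but does not affect the argument.

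Your alternative Morse-theoretic sketch on $X_n$ is not complete as stated. The cone-point $v_0$ is only available in the ascending link within $Y=X_n^{p\le f\le pn^2}$ when splitting the $0$th foot keeps you inside $Y$; for vertices with $f$ near the upper bound $pn^2$ this fails, so you cannot simply cone off. Handling this would require a case split (small $f$ versus large $f$) and a separate argument on the merge side for large $f$, analogous to what the paper does in Proposition~\ref{prop:asc_lk_conn} for the $\psi_i$ characters. Since the HNN route already gives the result in one line, the paper does not pursue this.
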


\begin{proof}
 By symmetry it suffices to do the $i=0$ case. We know $F_n=F_n(1)*_{x_0}$ and that $F_n(1)\cong F_n$ is of type $\F_\infty$. Also, $-\chi_0(F_n(1))=0$ and $-\chi_0(x_0)=1$, so the result follows from \cite[Theorem~2.1]{bieri10}.
\end{proof}

Now suppose $\chi=a\chi_0+b\chi_1$. Since $[\chi]\in\Sigma^2(F_n)$, we know from \cite[Proposition~9,Theorem~10]{kochloukova12} that $a<0$ or $b<0$. This could also be deduced using the action of $F_n$ on $X_n$, following the proof of the $n=2$ case in \cite{witzel15}. This would take many pages of technical details though, so we content ourselves with just citing Kochloukova to say that we know $a<0$ or $b<0$.

\begin{lemma}\label{lem:equator}
 If $\chi=a\chi_0+b\chi_1$ with $a<0$ or $b<0$ then $[\chi]\in\Sigma^\infty(F_n)$.
\end{lemma}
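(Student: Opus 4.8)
The plan is to realize the character $\chi = a\chi_0 + b\chi_1$ (with $a < 0$ or $b < 0$) as a height function on the Stein--Farley complex $X_n$ and verify the essential $(m-1)$-connectivity of its sublevel-set filtration using the Morse Lemma (Lemma~\ref{lem:morse}, via Corollary~\ref{cor:morse}), reducing everything to a connectivity statement about $(\chi,f)$-ascending links expressed in the matching-complex language of Section~\ref{sec:links_matchings}. By the symmetry $\chi_0 \leftrightarrow \chi_1$ (reflecting trees left-to-right), I may assume $a < 0$; and since Lemma~\ref{lem:poles} already disposes of the case $b = 0 = $ (everything else), the remaining work is genuinely about the interaction of the two boundary characters.

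First I would fix $m$ and choose, via Lemma~\ref{lem:sublevel_conn}, a slab $Y = X_n^{p \le f \le pn^2}$ that is $(m-1)$-connected, with $p \ge m$, on which $(\chi,f)$ is a Morse function by Proposition~\ref{prop:char_morse}, and on which $F_n$ acts freely and cocompactly (Observation~\ref{obs:cocompact}). The standard reduction shows $[\chi] \in \Sigma^m(F_n)$ follows once one knows that for every vertex $x$ of $Y$ the $(\chi,f)$-ascending link $\lk^{(\chi,f)\uparrow}_Y x$ is $(m-1)$-connected (together with a symmetric statement for $-\chi$ only when $-\chi$ is also being claimed in $\Sigma^m$; here only $[\chi]$ is claimed, so one direction of the Morse Lemma suffices). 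By Observation~\ref{obs:asc_lks_matchings}, writing $r = f(x)$, this ascending link is a full subcomplex of $\match_{\{0,n-1\}}(\Delta^n(r))$, cut down by the slab constraints. The key point, from Corollary~\ref{cor:char_match}, is that for $\chi = a\chi_0 + b\chi_1$ with $a < 0$: the split vertex $v_0$ is $\chi$-ascending (since $\chi_0$ decreases there and $a<0$), the merge $e_{[0,n-1]}$ is $\chi$-descending, the split $v_{r-1}$ is $\chi$-ascending or preserving depending on the sign of $b$, and \emph{every other} $v_k$ and every other $e_{[k,k+(n-1)]}$ is $\chi$-preserving. Thus the ascending link always contains the cone vertex $v_0$.

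Now I would exploit this cone point. Because $v_0$ is $\chi$-ascending and corresponds to splitting the $0$th foot (adding $n-1$ feet, so the feet-count rises, staying inside $Y$ as long as $r + (n-1) \le pn^2$, which holds for all but the largest $r$), I would argue that $\lk^{(\chi,f)\uparrow}_Y x$ is a cone with apex $v_0$ — hence contractible — whenever $v_0$ together with any ascending simplex $\mu$ still forms a simplex in the (slab-restricted) matching complex. The only obstruction is the slab's upper wall $f \le pn^2$: a $0$-matching containing $v_0$ and too many other split vertices $v_k$ could push the feet count past $pn^2$. Here I would use the slack built into Lemma~\ref{lem:sublevel_conn}: since $p \ge m$, the upper wall is far enough away that any ascending matching relevant to $(m-1)$-connectivity (which involves at most $m$ simplices' worth of data) can be coned with $v_0$ without violating $f \le pn^2$. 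A clean way to package this is: the full subcomplex of the ascending link on all ascending $0$-simplices is a simplex (all such $v_k$'s are mutually compatible as a $0$-matching, up to the wall) and is $(m-1)$-connected by the dimension count, while the merges $e_{[k,k+(n-1)]}$ that are ascending only occur at the far right end (governed by $b$) and do not obstruct coning at $v_0$ on the left. One then invokes a standard ``contains a cone vertex'' argument, or alternatively a second application of the Morse Lemma within the link.

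The step I expect to be the main obstacle is the careful bookkeeping at the slab's upper wall: ruling out the possibility that coning at $v_0$ is blocked by the constraint $r + (n-1)\mu_0 \le pn^2$. Getting the constant $pn^2$ (equivalently, the requirement $p \ge m$) to do exactly what is needed — guaranteeing that the relevant ascending simplices have few enough vertices that adjoining $v_0$ keeps the feet count legal — is where the real (if elementary) work lies, mirroring the numerical estimate already carried out in the proof of Lemma~\ref{lem:sublevel_conn}. Once that is secured, contractibility of the ascending links is immediate from the cone-point observation, Corollary~\ref{cor:morse} gives essential $(m-1)$-connectivity of the filtration, and hence $[\chi] \in \Sigma^\infty(F_n) = \bigcap_m \Sigma^m(F_n)$.
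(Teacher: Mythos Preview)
Your approach is genuinely different from the paper's and, as written, has a real gap. The paper's proof is three lines of algebra: by symmetry assume $b<0$; since $\chi_0(F_n(1))=0$, the restriction $\chi|_{F_n(1)}$ is a positive multiple of $-\chi_1$; and since $F_n = F_n(1) *_{x_0}$ is an ascending HNN extension with base $F_n(1)\cong F_n$ of type $\F_\infty$, one invokes \cite[Theorem~2.3]{bieri10} together with Lemma~\ref{lem:poles} to conclude $[\chi]\in\Sigma^\infty(F_n)$. No Morse theory, no matching complexes, no slab.

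Your Morse-theoretic strategy breaks at the top wall of the slab, and the obstruction is not mere bookkeeping. Take $a<0$ and $b>0$, and let $x\in Y$ have $r=f(x)$ chosen so that exactly one split is permitted by the constraint $r+(n-1)\mu_0\le pn^2$ (such an $r$ exists just below the top of the slab). Then no two of the $v_j$ span an edge in $\lk^{(\chi,f)\uparrow}_Y x$. By Corollary~\ref{cor:char_match} the only $\chi$-ascending merge is $e_{[r-n,r-1]}$, and it is not joinable to any of $v_{r-n},\dots,v_{r-2}$; yet each of these $v_k$ is $\chi$-preserving and $f$-ascending, hence lies in the ascending link. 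The ascending link is therefore a star centered at $e_{[r-n,r-1]}$ together with $n-1$ isolated points --- not even connected. (The case $b\le 0$ is worse: there are then no ascending merges at all, and at the very top of the slab the ascending link is empty.) Your proposed fix --- that only simplices with at most $m$ vertices are ``relevant'' to $(m-1)$-connectivity --- misreads Corollary~\ref{cor:morse}: that corollary requires each ascending link to be $(m-1)$-connected as a space, not merely that low-dimensional spheres in it can be individually coned. The underlying reason your plan stalls is that for $\chi=a\chi_0+b\chi_1$ there is at most \emph{one} $\chi$-ascending merge, so near the top wall you cannot imitate Proposition~\ref{prop:asc_lk_conn}'s device of assembling a large simplex of ascending merges. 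This is precisely why the paper handles these characters via the HNN structure rather than via Morse theory on $X_n$.
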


\begin{proof}
 By symmetry we can assume $b<0$. Since $\chi_0(F_n(1))=0$, we have that $\chi|_{F_n(1)}$ is equivalent to $-\chi_1$ when restricted to $F_n(1)$. Now, $F_n(1)\cong F_n$ is of type $F_\infty$ and $F_n=F_n(1)*_{x_0}$, so by \cite[Theorem~2.3]{bieri10} and Lemma~\ref{lem:poles} $[\chi]\in\Sigma^\infty(F_n)$.
\end{proof}

Now we can assume $n>2$ and $\chi$ has non-zero $\psi_i$ component for some $i$.

\begin{lemma}\label{lem:push_to_hemispheres}
 Assume that we already know every non-trivial character of the form $\chi'=\sum_{i=0}^{n-3} c_i \psi_i$ has $[\chi']\in\Sigma^\infty(F_n)$. Then for any $\chi=a\chi_0 + \sum_{i=0}^{n-3} c_i \psi_i + b\chi_1$ with $c_i\ne 0$ for at least one $i$, we have $[\chi]\in\Sigma^\infty(F_n)$.
\end{lemma}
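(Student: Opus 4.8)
The plan is to reduce the general character $\chi = a\chi_0 + \sum_{i=0}^{n-3} c_i \psi_i + b\chi_1$ to the ``pure $\psi$'' character $\chi' = \sum_{i=0}^{n-3} c_i \psi_i$ by exploiting the same ascending HNN-structure $F_n = F_n(1) *_{x_0}$ used in Lemmas~\ref{lem:poles} and~\ref{lem:equator}, together with the symmetric version coming from the subgroup generated by the $x_i$ with $i < r$ for suitable $r$ (equivalently, an HNN-structure that ``sees'' the $\chi_1$ side). Concretely, I would first observe how the basis characters restrict to $F_n(1) \cong F_n$: since $\chi_0(F_n(1)) = 0$, the restriction of $\chi$ to $F_n(1)$ kills the $\chi_0$-component, so $\chi|_{F_n(1)}$ becomes (after identifying $F_n(1)$ with $F_n$) a character of the form $\sum_{i=0}^{n-3} c_i' \psi_i + b'\chi_1$, with the $\psi_i$ still not all zero. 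Iterating on the other end — using the mirror-image HNN-decomposition that fixes the $\chi_1$ side instead — one similarly strips off the $\chi_1$-component, landing on a character of the form $\sum_{i=0}^{n-3} c_i'' \psi_i$, which is non-trivial and hence in $\Sigma^\infty(F_n)$ by hypothesis. The tool that makes each stripping step upgrade back to $F_n$ is \cite[Theorem~2.3]{bieri10} (ascending HNN-extensions), exactly as in Lemma~\ref{lem:equator}: if $G = H *_t$ is an ascending HNN-extension with $H$ of type $\F_\infty$ and $[\chi|_H] \in \Sigma^\infty(H)$, and $\chi(t) \ge 0$ (or one also knows the ``$t$ side'' lies in $\Sigma^\infty$ via Lemma~\ref{lem:poles}), then $[\chi] \in \Sigma^\infty(G)$.

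The main steps, in order, would be: (1) record the precise effect of the HNN-restriction on each basis character, i.e.\ that $\chi_0$ restricts to $0$ on $F_n(1)$ and that $\chi_1$ and the $\psi_i$ restrict (under the canonical isomorphism $F_n(1) \cong F_n$) to characters whose ``type'' is understood — here one must be careful that the $\psi_i$-components do not all simultaneously vanish after restriction, which is where the linear-independence bookkeeping from Proposition~\ref{prop:char_basis} and the shift behaviour of the $\psi_i$ matter; (2) handle the sign/direction of $x_0$: we may need to split into cases according to the sign of $a$, using Lemma~\ref{lem:poles} (which gives $[-\chi_0] \in \Sigma^\infty$, hence the $t^{-1}$ direction is also controlled) together with \cite[Theorem~2.1 and Theorem~2.3]{bieri10} so that the HNN-extension argument applies regardless of $\operatorname{sgn}(a)$, just as Lemma~\ref{lem:equator} did for the $(a,b)$-plane; (3) symmetrically strip off $b\chi_1$ using the reflected HNN-decomposition; (4) conclude by invoking the standing hypothesis on pure-$\psi$ characters. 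If $\chi$ already has $a = 0$ or $b = 0$ one or both stripping steps are vacuous, so there is no loss of generality in doing them in sequence.

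The step I expect to be the main obstacle is (1): controlling what the $\psi_i$-part of $\chi$ becomes after restricting to $F_n(1)$ and re-identifying with $F_n$. The issue is that the isomorphism $F_n(1) \cong F_n$ shifts the generators $x_i \mapsto x_{i-1}$, and this shift interacts non-trivially with the mod-$(n-1)$ bookkeeping that defines the $\psi_i$ (cf.\ the congruence conditions in Corollary~\ref{cor:char_match} and Lemma~\ref{lem:vary_chars}); in particular a priori the $\psi_i$-components could get permuted, scaled, or mixed with $\chi_0$ or $\chi_1$. The resolution I anticipate is that the shift permutes the $\psi_i$ cyclically (together with a bounded correction absorbed by the $\chi_0,\chi_1$ directions, which are already being killed/stripped), so that ``not all $c_i$ zero'' is preserved — but verifying this cleanly, or else finding a way to phrase the argument so that it only needs the qualitative fact ``the pure-$\psi$ hypothesis is closed under the relevant symmetry,'' is the crux. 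An alternative route that sidesteps the shift computation would be to run the HNN-argument with a \emph{different} distinguished generator (or a different finite-index HNN-like decomposition of $F_n$) chosen so that the $\psi_i$ visibly restrict to $\psi_i$; I would try this if the direct computation turns out to be messy.
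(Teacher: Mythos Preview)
Your plan is exactly the paper's argument. The paper's proof is four sentences: it restricts $\chi$ to $F_n(1)$ (using $\chi_0|_{F_n(1)}=0$ and \cite[Theorem~2.3]{bieri10}, as in Lemma~\ref{lem:equator}) to declare ``without loss of generality $a=0$'', then invokes the left--right reflection symmetry of $F_n$ to swap the roles of $\chi_0$ and $\chi_1$ and get ``$b=0$'', then restricts once more. So your steps (1)--(4) match the paper's outline precisely, including the use of a mirror HNN for the $\chi_1$-side.

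Where you are more careful than the paper is exactly your flagged obstacle~(1): the paper does \emph{not} track what the $\psi_i$ become under the shift isomorphism $F_n(1)\cong F_n$, nor does it explicitly verify that ``some $c_i\ne0$'' survives each reduction --- it only notes that the restriction stays non-trivial. Your worry here is legitimate, and your guess at the resolution is correct: one does not need the full mod-$(n-1)$ cyclic-shift computation. The qualitative facts that suffice are (i) the restriction map $\Hom(F_n,\R)\to\Hom(F_n(1),\R)$ has kernel precisely $\R\chi_0$, so $\psi_0|_{F_n(1)},\dots,\psi_{n-3}|_{F_n(1)},\chi_1|_{F_n(1)}$ remain linearly independent; (ii) $\chi_1|_{F_n(1)}$ coincides with the \emph{intrinsic} $\chi_1$ of $F_n(1)$ (both measure the slope at the common right endpoint~$1$); and (iii) each $\psi_i|_{F_n(1)}$ has zero intrinsic-$\chi_1$ component, since $\psi_i$ only sees interior breakpoints. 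From (i)--(iii) one checks that the restricted character cannot land in the intrinsic $\operatorname{span}(\chi_0,\chi_1)$ of $F_n(1)$, so ``some $c_i\ne0$'' persists through both the HNN-restriction and the reflection, and the reduction closes. Your alternative route (choosing a decomposition for which the $\psi_i$ visibly restrict to themselves) is unnecessary.
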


\begin{proof}
 Note that such a $\chi$ restricted to $F_n(1)$ is still non-trivial. As in the previous proof, we can restrict to $F_n(1)$ and ensure that without loss of generality $a=0$. If $b\ne 0$ then appealing to symmetry, we can rather assume $a\ne 0$ but $b=0$. Now by the first sentence we can reduce to the case $a=b=0$. But this is exactly the case already handled in the assumption.
\end{proof}

This brings us to the final case, where we assume that $\chi$ is a linear combination of the $\psi_i$ for $0\le i\le n-3$, and show that $[\chi]\in\Sigma^\infty(F_n)$. This is where Kochloukova's approach in \cite{kochloukova12} became difficult to extend beyond the $\Sigma^2$ case, and where our setup from the previous sections will prove to be able to handle all the $\Sigma^m$.

Let $m\in\N$ and set $p\defeq 4m+5$. Let $Y\defeq X_n^{p\le f\le pn^2}$. Then $Y$ is $(m-1)$-connected (Lemma~\ref{lem:sublevel_conn}), and $F_n$ acts freely and cocompactly on $Y$ (Observation~\ref{obs:cocompact}), so we have the requisite setup of Definition~\ref{def:bnsr}. (Of course $Y$ would have already been $(m-1)$-connected just using $p=m$, but having $p=4m+5$ will be important in the proof of Proposition~\ref{prop:asc_lk_conn}.) According to Definition~\ref{def:bnsr}, we need to show that $(Y^{t\le\chi})_{t\in\R}$ is essentially $(m-1)$-connected and then we will have $[\chi]\in\Sigma^m(F_n)$. In fact we will show that every $Y^{t\le\chi}$ is $(m-1)$-connected.

The proof that $Y^{t\le\chi}$ is $(m-1)$-connected will be quite technical, so we first sketch the proof here, to serve as an outline for what follows.

\begin{proof}[Sketch of proof]
 Thanks to Morse theory, it suffices to show that all $(\chi,f)$-ascending links of vertices $x$ are $(m-1)$-connected. Since we are working in $Y$, we know the number of feet of $x$ lies between $p$ and $pn^2$. We consider the cases $p\le f(x)\le pn$ and $pn \le f(x) \le pn^2$ separately. In the first case, even if we split every foot of $x$, we remain in $Y$, so all splittings are ``legal''. In particular if there is some ascending splitting move that is joinable in $X_n$ to every other ascending move, then these joins can even take place in $Y$, and the ascending link is a (contractible) cone. It turns out that the move where we split the rightmost foot serves as such a cone point. Now consider the second case, $pn \le f(x) \le pn^2$. Here there may be splitting moves that push us out of $Y$, but every merging move keeps us inside $Y$. It is too much to hope for to find an ascending merging move joinable to every other ascending move. However, we do find an ascending merging move consisting of a ``large'' simplex $\sigma_q$ such that every ascending vertex is joinable to ``almost all'' of $\sigma_q$. We prove in Lemma~\ref{lem:popular_simplex} that this is sufficient to get high connectivity.
\end{proof}

Now we begin the technicalities. First we need a lemma that is a useful tool for proving higher connectivity of certain complexes. Heuristically, if there is a simplex $\sigma$ such that every vertex is joinable to ``most'' of $\sigma$, then we can conclude higher connectivity properties. The case when the complex is finite and flag was proved by Belk and Forrest, and written down by Belk and Matucci in \cite{belk15}. Here we show that the requirement of being finite can easily be relaxed. We also replace the requirement of being flag with something weaker, and rephrase the condition from \cite{belk15} so that in the flag case it is the same. This is a necessary modification, since the complexes we will apply this lemma to in the proof of Proposition~\ref{prop:asc_lk_conn} are not flag.

\begin{definition}
 Let $\Delta$ be a simplicial complex. Two simplices $\rho_1$ and $\rho_2$ are \emph{joinable} to each other if they lie in a common simplex. For a fixed simplex $\sigma$ in $\Delta$, we will call $\Delta$ \emph{flag with respect to $\sigma$} if whenever $\rho$ is a simplex and $\sigma'$ is a face of $\sigma$ such that every vertex of $\rho$ is joinable to every vertex of $\sigma'$, already $\rho$ is joinable to $\sigma'$. For example if $\Delta$ is flag with respect to every simplex, then it is flag.
\end{definition}

\begin{lemma}\label{lem:popular_simplex}
 Let $\Delta$ be a simplicial complex, and let $k\in\N$. Suppose there exists an $\ell$-simplex $\sigma$ such that $\Delta$ is flag with respect to $\sigma$, and for every vertex $v$ in $\Delta$, $v$ is joinable to some $(\ell-k)$-face of $\sigma$. Then $\Delta$ is $(\lfloor\frac{\ell}{k}\rfloor-1)$-connected.
\end{lemma}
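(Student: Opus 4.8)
The plan is to cover a large skeleton of $\Delta$ by the closed stars of the vertices of $\sigma$, prove this union is contractible via the Nerve Lemma, and then finish with simplicial approximation. Set $N\defeq\lfloor\ell/k\rfloor$, so the goal is $(N-1)$-connectivity, and let $a_0,\dots,a_\ell$ be the vertices of $\sigma$. For a face $\sigma'$ of $\sigma$, write $\st(\sigma')$ for its closed star in $\Delta$, i.e.\ the subcomplex of all simplices $\tau$ with $\tau\cup\sigma'\in\Delta$; this is a cone with apex $\sigma'$ over $\lk_\Delta(\sigma')$, hence contractible. Let $\Delta'\defeq\bigcup_{i=0}^\ell\st(a_i)$, a subcomplex of $\Delta$. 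I would first record that, granting flagness with respect to $\sigma$, the hypothesis is equivalent to saying that for every vertex $v$ of $\Delta$ at most $k$ of the $a_i$ fail to be joinable to $v$: an $(\ell-k)$-face of $\sigma$ joinable to $v$ has $\ell-k+1$ vertices, each joinable to $v$, and conversely any $\ell-k+1$ of the $a_i$ that are individually joinable to $v$ span an $(\ell-k)$-face which, by flagness with respect to $\sigma$, is joinable to $v$.

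The heart of the argument is the identity
\[
\st(a_{i_0})\cap\cdots\cap\st(a_{i_j})=\st(\sigma'),\qquad\sigma'\defeq\{a_{i_0},\dots,a_{i_j}\},
\]
for every index set $i_0<\cdots<i_j$. The inclusion $\supseteq$ is clear; for $\subseteq$, a simplex $\tau$ lying in every $\st(a_{i_r})$ has each of its vertices joinable to each vertex of the face $\sigma'$ of $\sigma$, so $\tau$ is joinable to $\sigma'$ by flagness with respect to $\sigma$, i.e.\ $\tau\in\st(\sigma')$. Hence every finite intersection of the subcomplexes $\st(a_i)$ contains the corresponding face of $\sigma$, so is nonempty and contractible. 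The Nerve Lemma \cite[Lemma~1.2]{bjoerner94} then gives $\Delta'\simeq\nerve$, and since every subset of $\{0,\dots,\ell\}$ indexes a nonempty intersection, $\nerve$ is the full $\ell$-simplex, which is contractible; therefore $\Delta'$ is contractible.

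Next I would check $\Delta^{(N-1)}\subseteq\Delta'$: a simplex $\rho$ of dimension $j\le N-1$ has $j+1\le N$ vertices, each non-joinable to at most $k$ of the $a_i$, so at least $(\ell+1)-(j+1)k\ge(\ell+1)-Nk\ge 1$ of the $a_i$ are joinable to every vertex of $\rho$ (using $Nk\le\ell$); fixing such an $a_i$, flagness with respect to $\sigma$ makes $\rho\cup\{a_i\}$ a simplex, so $\rho\in\st(a_i)\subseteq\Delta'$. To conclude, note $\Delta$ is nonempty since $\sigma\in\Delta$ (covering the case $N=0$), and for $0\le d\le N-1$ any map $S^d\to\Delta$ is homotopic, by simplicial approximation, to a simplicial map out of a $d$-dimensional triangulation of $S^d$ (with the target $\Delta$ not subdivided), whose image lies in $\Delta^{(d)}\subseteq\Delta^{(N-1)}\subseteq\Delta'$; as $\Delta'$ is contractible, the map is nullhomotopic in $\Delta$, so $\Delta$ is $(N-1)$-connected.

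I expect the main obstacle to be the displayed identity above: it is exactly the point at which the weakened hypothesis ``flag with respect to $\sigma$'' is used, and it is what lets the Nerve Lemma run, thereby upgrading the finite, fully-flag statement of Belk and Forrest to the generality needed here. A secondary point requiring care is that the Nerve Lemma is applied to the cover of $\Delta'$ by the $\ell+1$ subcomplexes $\st(a_i)$ with no finiteness assumption on $\Delta$; this is harmless, since the relevant hypotheses concern only the (finitely many) intersections, all of which are nonempty and contractible.
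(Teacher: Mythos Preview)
Your proof is correct and takes a genuinely different route from the paper's. The paper first reduces to finite $\Delta$ (the hypotheses pass to full subcomplexes containing $\sigma$) and then inducts on the number of vertices: one deletes a vertex $v\notin\sigma$, observes that the remaining complex still satisfies the hypotheses, and checks that $L=\lk v$ satisfies an analogous hypothesis with the smaller simplex $\tau=\sigma\cap\st v$ in place of $\sigma$; a dimension count then shows $L$ is $(\lfloor\ell/k\rfloor-2)$-connected, completing the induction. Your argument instead covers the $(N-1)$-skeleton by the closed stars $\st(a_i)$, identifies each intersection as $\st(\sigma')$ for the corresponding face $\sigma'\le\sigma$ (exactly where flagness with respect to $\sigma$ enters), and invokes the Nerve Lemma already used in Lemma~\ref{lem:top_matching_conn}. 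Your route is shorter and avoids both the reduction to finite complexes and the induction; it also makes transparent that only the $(N-1)$-skeleton matters. The paper's inductive approach, by contrast, stays closer to the original Belk--Forrest argument and exposes how the flag condition propagates to links, which is of independent interest if one wants finer information about $\lk v$.
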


\begin{proof}
 First note that our hypotheses on $\Delta$ are preserved under passing to any full subcomplex $\Delta'$ containing $\sigma$. Indeed, joinability of two simplices is preserved under passing to any full subcomplex containing them, so $\Delta'$ is still flag with respect to $\sigma$ and every vertex of $\Delta'$ is still joinable to some $(\ell-k)$-face of $\sigma$. In particular, without loss of generality $\Delta$ is finite. Indeed, any homotopy sphere lies in some full subcomplex $\Delta'$ of $\Delta$ that contains $\sigma$ and is finite, and the complex $\Delta'$ still satisfies our hypotheses since it is full and contains $\sigma$. If the sphere is nullhomotopic in $\Delta'$ then it certainly is nullhomotopic in $\Delta$, so from now on we may indeed assume $\Delta$ is finite.

 We induct on the number $V$ of vertices of $\Delta$. If $V=\ell+1$ then $\Delta=\sigma$ is contractible. Now suppose $V>\ell+1$, so we can choose a vertex $v\in \Delta \setminus \sigma$. The subcomplex obtained from $\Delta$ by removing $v$ along its link $L\defeq \lk v$ has fewer vertices than $\Delta$, is full, and contains $\sigma$, so by the first paragraph and by induction it is $(\lfloor\frac{\ell}{k}\rfloor-1)$-connected. It now suffices to show that $L$ is $(\lfloor\frac{\ell}{k}\rfloor-2)$-connected.
 
 Let $\tau \defeq \sigma\cap \st v$, so $\tau$ also equals $\sigma\cap L$. Since $\Delta$ is flag with respect to $\sigma$, $\tau$ is a face of $\sigma$. Say $\tau$ is an $(\ell-k')$-simplex, which since $v$ is joinable to an $(\ell-k)$-face of $\sigma$ tells us that $k'\le k$. Now let $w$ be a vertex in $L$. By similar reasoning we know that $\tau_w \defeq \sigma \cap \st w$ is an $(\ell-k'_w)$-simplex for $k'_w\le k$. Intersecting the two faces $\tau$ and $\tau_w$ of $\sigma$ thus yields a face $\omega_w$ that is an $(\ell-k'-k'')$-simplex for $k''\le k'_w\le k$. Since $v$ and $w$ are joinable to $\omega_w$, and $\Delta$ is flag with respect to $\sigma$, the edge connecting $v$ and $w$ is also joinable to $\omega_w$. In particular $\omega_w$ is joinable to $w$ in $L$. We have shown that there is an $(\ell-k')$-simplex, $\tau$, in $L$ such that every vertex $w$ of $L$ is joinable in $L$ to an $(\ell-k'-k)$-face of $\tau$, namely any $(\ell-k'-k)$-face of $\omega_w$. We also claim that $L$ is flag with respect to $\tau$. Indeed, if $\rho$ is a simplex in $L$ and $\tau'$ is a face of $\tau$ such that every vertex of $\rho$ is joinable to every vertex of $\tau'$, then $\rho*v$ is a simplex in $\Delta$ all of whose vertices are joinable in $\Delta$ to all the vertices of $\tau'$, so $\rho*v$ is joinable in $\Delta$ to $\tau'$ and indeed $\rho$ is joinable in $L=\lk v$ to $\tau'$. Now we can apply the induction hypothesis to $L$, and conclude that $L$ is $(\lfloor\frac{\ell-k'}{k}\rfloor-1)$-connected, and hence $(\lfloor\frac{\ell}{k}\rfloor-2)$-connected.
\end{proof}

As a trivial example (which works for any $\Delta$), if there exists a simplex $\sigma$ such that every vertex is joinable to some vertex of $\sigma$, so we can use $k=\ell$, then $\Delta$ is $0$-connected. To tie Lemma~\ref{lem:popular_simplex} to the version in \cite{belk15}, note that if $\Delta$ is flag, then $v$ being joinable to an $(\ell-k)$-face of $\sigma$ is equivalent to $v$ being joinable to all but at most $k$ vertices of $\sigma$.

\medskip

We return to the complex $Y=X_n^{p\le f\le pn^2}$ (recall $p=4m+5$) and the problem of showing that every $Y^{t\le\chi}$ is $(m-1)$-connected. Consider
$$h\defeq(\chi,f) \colon Y \to \R \times \R \text{,}$$
ordered lexicographically. This is a Morse function by Proposition~\ref{prop:char_morse}, so by the Morse Lemma~\ref{lem:morse} (specifically Corollary~\ref{cor:morse}), it suffices to show the following:

\begin{proposition}\label{prop:asc_lk_conn}
 Let $x$ be a vertex in $Y$. Then the $h$-ascending link $\lk^{h\uparrow}_Y x$ is $(m-1)$-connected.
\end{proposition}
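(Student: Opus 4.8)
The plan is to pass to the matching model of Section~\ref{sec:links_matchings} and argue according to the number of feet $r\defeq f(x)$. By Observation~\ref{obs:asc_lks_matchings} and Corollary~\ref{cor:char_match}, identify $\lk^{h\uparrow}_Y x$ with the full subcomplex of $\match_{\{0,n-1\}}(\Delta^n(r))$ whose vertices are the splits $v_k$ at which $\chi$ does not strictly decrease together with the merges $e_{[k,k+(n-1)]}$ at which $\chi$ strictly increases, and from which a simplex $\mu$ is deleted exactly when $r+(n-1)\mu_0>pn^2$ or $r-(n-1)\mu_{n-1}<p$. Two facts will be used throughout. First, $v_{r-1}$ is always $\chi$-preserving (since $r-2\equiv n-2\pmod{n-1}$), so it lies in the ascending link whenever splitting one foot keeps $x$ inside $Y$, and it is joinable in $\match_{\{0,n-1\}}(\Delta^n(r))$ to every other split and to every merge except $e_{[r-n,r-1]}$. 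Second, since $\chi=\sum_{i=0}^{n-3}c_i\psi_i$ is non-trivial the cyclic $(n-1)$-tuple $(c_0,\dots,c_{n-3},0)$ is non-constant and so has a cyclic ascent $i_0$ (an index with $c_{i_0}>c_{i_0-1}$, reading indices mod $n-1$ and putting $c_{n-2}\defeq0$); by Corollary~\ref{cor:char_match} every merge $e_{[a,a+(n-1)]}$ with $0<a<r-n$ and $a\equiv i_0\pmod{n-1}$ is then strictly $\chi$-ascending, with net effect $c_{i_0}-c_{i_0-1}>0$ on $\chi$.

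\emph{Case $p\le r\le pn$.} Here $rn\le pn^2$, so every collection of splits is legal and only merges can be deleted; I would show the ascending link $\Delta$ is contractible. If $e_{[r-n,r-1]}$ is not a vertex of $\Delta$ — automatic when $r<p+n-1$, and also whenever $\chi$ does not strictly increase there — then $\Delta$ is the cone on $v_{r-1}$, and we are done. Otherwise write $\Delta$ as the union of the closed stars of $v_{r-1}$ and of $e_{[r-n,r-1]}$, each a cone, whose intersection is the full subcomplex on the moves disjoint from the feet $r-n,\dots,r-1$; then $\Delta$ is homotopy equivalent to the unreduced suspension of that subcomplex, which has the same shape — only one split and one merge touch its rightmost active foot — so the excision iterates, each pass removing $n$ feet and tightening the merge-legality constraint by $n-1$. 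After at most $\sim\frac{r-p}{n-1}$ passes no merge is legal, and one checks that the boundary arithmetic (an ascending merge at the right end forces an ascending split to survive to the end) leaves a non-empty simplex of ascending splits, hence a contractible complex; since unreduced suspensions of a point are points, $\Delta$ is contractible. The care needed is bookkeeping: the $\chi$-behavior of a split at a foot that is interior in $x$ but terminal after truncation need not be non-decreasing, so the induction has to be phrased for ``ascending links restricted to the first $j$ feet'' rather than for honest vertices with fewer feet — and the slack in $p=4m+5$ is what makes this go through.

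\emph{Case $pn\le r\le pn^2$.} Here at least $r/n\ge p$ feet survive any collection of merges, so every collection of merges is legal and only splits can be deleted; I would apply Lemma~\ref{lem:popular_simplex}. Let $\sigma_q$ be the simplex spanned by the disjoint $\chi$-ascending merges $e_{[a,a+(n-1)]}$ for $a$ running through the arithmetic progression of common difference $2(n-1)$ that starts at the least positive residue $\equiv i_0\pmod{n-1}$ and stops below $r-n$; consecutive members are $2(n-1)\ge n$ apart and hence disjoint, and since $r\ge pn$ with $p=4m+5$ there are more than $\frac{p-2}{2}>2m$ of them, so $\sigma_q$ is an $\ell$-simplex with $\ell\ge2m$. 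The ascending link is flag with respect to $\sigma_q$ (a union of pairwise-joinable matchings is again a matching, and adjoining merges never breaks legality in this range). A split lies in at most one of the disjoint length-$n$ intervals of $\sigma_q$, hence meets an $(\ell-1)$-face; a merge is itself a length-$n$ interval, hence overlaps at most two of those intervals and meets an $(\ell-2)$-face. So Lemma~\ref{lem:popular_simplex} applies with $k=2$, yielding that the ascending link is $(\lfloor\ell/2\rfloor-1)$-connected, hence $(m-1)$-connected.

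\emph{Main obstacle.} The identifications, the cone arguments, and the numerology are short; the substance lies in the Case~2 use of Lemma~\ref{lem:popular_simplex} — exhibiting a single ascending merge-simplex of dimension linear in $m$ that every ascending vertex meets in codimension at most $2$, which is precisely what forces $p$ to grow linearly in $m$ — together with the bookkeeping in Case~1's iterated excision. I expect Case~1's induction to be the fussiest point, because of the interior-versus-terminal foot discrepancy and the need to carry the tightening merge constraint through all $\sim\frac{r-p}{n-1}$ steps.
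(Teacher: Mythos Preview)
Your Case~2 is essentially the paper's argument: you build the same simplex $\sigma_q$ of disjoint ascending merges in arithmetic progression with step $2(n-1)$, verify flagness with respect to $\sigma_q$ using that merges never break the upper bound on $f$, observe that a split meets at most one and a merge at most two members of $\sigma_q$, and invoke Lemma~\ref{lem:popular_simplex} with $k=2$. The numerology matches.

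Your Case~1, however, is a detour past a one-line observation. You correctly note that $v_{r-1}$ is $\chi$-preserving because $r-2\equiv n-2\pmod{n-1}$ and $\chi$ involves only $\psi_0,\dots,\psi_{n-3}$. The same congruence shows that the merge $e_{[r-n,r-1]}$ is also $\chi$-preserving: writing it as $e_{[k,k+(n-1)]}$ with $k=r-n$, one has $k-1\equiv r-2\equiv n-2\pmod{n-1}$, so by Corollary~\ref{cor:char_match} this merge is $\psi_{n-2}$-descending and $\psi_i$-preserving for $0\le i\le n-3$; and the clause for $e_{[k-(n-1),k]}$ with $k=r-1$ does not apply since $k<r-1$ fails. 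Hence $e_{[r-n,r-1]}$ is $\chi$-preserving, and since it decreases $f$ it is \emph{never} $h$-ascending. Your ``Otherwise'' branch is therefore vacuous, and the ascending link is simply the cone on $v_{r-1}$ --- no iterated suspension, no bookkeeping about interior-versus-terminal feet, no induction. This is exactly what the paper does, and it is why the ``fussiest point'' you anticipate does not arise.
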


\begin{proof}
 Let $r\defeq f(x)$. We view $\lk x$ as $\match_{\{0,n-1\}}(\Delta^n(r))$, so the $h$-ascending link is as described in Observation~\ref{obs:asc_lks_matchings}. We will split the problem into two cases: when $r$ is ``small'' and when $r$ is ``big''. First suppose $p\le r\le pn$. In this case, for any $\{0,n-1\}$-matching $\mu$, with $\mu_0$ the number of $0$-simplices in $\mu$ that are $0$-matchings (so $\mu_0 \le pn$), we have $r+(n-1)\mu_0 \le pn + (n-1)pn = pn^2$. In particular the addition of $0$-matchings to a $\{0,n-1\}$-matching will never push us out of $Y$.

 We know from Corollary~\ref{cor:char_match} that $v_{r-1}$ is $\psi_i$-preserving for all $0\le i\le n-3$, and hence $\chi$-preserving. If $e_{[r-n,r-1]}$ represents a vertex of $\lk_Y x$, i.e., if $p\le r-(n-1)$, then $e_{[r-n,r-1]}$ is $\chi$-preserving since $r-1>0$. Hence $v_{r-1}$ is $h$-ascending and $e_{[r-n,r-1]}$ is not, by Observation~\ref{obs:asc_lks_matchings}. But $e_{[r-n,r-1]}$ is the only $0$-simplex of $\match_{\{0,n-1\}}(\Delta^n(r))$ not joinable to $v_{r-1}$, so $\lk^{h\uparrow}_Y x$ is contractible, via the conical contraction $\mu \le \mu\cup\{v_{r-1}\} \ge \{v_{r-1}\}$.

 Now suppose $pn \le r\le pn^2$. In this case, for any $\{0,n-1\}$-matching $\mu$, with $\mu_{n-1}$ the number of $0$-simplices in $\mu$ that are $(n-1)$-matchings (so $n\mu_{n-1} \le r$), we claim that $r-(n-1)\mu_{n-1} \ge p$. Indeed, if $\mu_{n-1} \ge p$, then $r-(n-1)\mu_{n-1} \ge \mu_{n-1} \ge p$, and if $\mu_{n-1}<p$ then $r-(n-1)\mu_{n-1}> pn - (n-1)p =p$. In analogy to the previous case, this means that the addition of $(n-1)$-matchings to a $\{0,n-1\}$-matching will never push us out of $Y$.

 For $0\le q\le n-2$, let $\sigma_q$ be the $((s/2)-1)$-simplex
 $$\sigma_q \defeq \{e_{[q+(n-1),q+2(n-1)]},e_{[q+3(n-1),q+4(n-1)]}, \dots, e_{[q+(s-1)(n-1),q+s(n-1)]}\} \text{,}$$
 where $s\in 2\N$ is as large as possible such that $q+s(n-1)<r-1$; see Figure~\ref{fig:sigma_0} for an example. Since $r\ge pn \ge 9n$, such an $s$ certainly exists. By maximality of $s$, we must have $q+(s+2)(n-1) \ge r-1$, and since $r \ge pn$ and $q\le n-2$, we then have $s\ge \lfloor\frac{(p-3)n+3}{n-1}\rfloor$. By definition $p=4m+5$, and it is straightforward to check that this bound gives us the bound $s\ge 4m+2$.

 We now want to cleverly choose $q$ so that $\sigma_q$ is $\chi$-ascending, and hence $h$-ascending. Recall that $\chi=c_0\psi_0+\cdots+c_{n-3}\psi_{n-3}$, and now also set $c_{n-2}\defeq 0$. Let $0\le q\le n-2$ be any value such that, with subscripts considered mod $(n-1)$, we have $c_{q-1}<c_q$. Since the $c_i$ cannot all be zero, such a $q$ exists. For this choice of $q$, and any $1\le t\le s-1$, we claim that $e_{[q+t(n-1),q+(t+1)(n-1)]}$ is $\chi$-ascending, which will then imply that $\sigma_q \in \lk^{h\uparrow}_Y x$. By Corollary~\ref{cor:char_match}, since $0<q+(n-1)$ and $q+s(n-1)<r-1$, we know that $e_{[q+t(n-1),q+(t+1)(n-1)]}$ is $\psi_{q-1}$-descending (subscript taken mod $(n-1)$), $\psi_q$-ascending, and $\psi_i$-preserving for all other $0\le i\le n-2$. Then since $c_{q-1}<c_q$, Corollary~\ref{cor:char_match} tells us that $e_{[q+t(n-1),q+(t+1)(n-1)]}$ is indeed $\chi$-ascending, and so $h$-ascending, namely it increases $\chi$ by $c_q-c_{q-1}>0$.
 
 With this $h$-ascending $((s/2)-1)$-simplex $\sigma_q$ in hand, we want to apply Lemma~\ref{lem:popular_simplex} to $\lk^{h\uparrow}_Y x$. Note that $\lk^{h\uparrow}_{X_n} x$ is flag, but $\lk^{h\uparrow}_Y x$ might not be, since filling in missing simplices might require pushing $f$ above $pn^2$. However, we claim $\lk^{h\uparrow}_Y x$ is flag with respect to $\sigma_q$. Indeed, if $\rho$ is a simplex and $\sigma'_q$ is a face of $\sigma_q$ such that every vertex of $\rho$ is joinable to every vertex of $\sigma_q'$, then since $X_n$ is flag we can consider the simplex $\rho*\sigma'_q$ in $X_n$, and since $\sigma_q$ consists only of $(n-1)$-matchings, $f$ achieves its maximum on $\rho*\sigma'_q$ already on $\rho$. Hence if $\rho$ came from $Y$, then $\rho*\sigma'_q$ is also in $Y$, and so $\lk^{h\uparrow}_Y x$ is flag with respect to $\sigma_q$. Now we want to show that every vertex of $\lk^{h\uparrow}_Y x$ is joinable to ``most'' of $\sigma_q$. Let $\mu$ be any $0$-simplex in $\match_{\{0,n-1\}}(\Delta^n(r))$. We claim that $\mu$ is joinable to all but at most two vertices of $\sigma_q$. Indeed, if $\mu=\{v_j\}$ then $\mu$ fails to be joinable to $\{e_{[k,k+(n-1)]}\}$ if and only if $k\le j\le k+(n-1)$, and there is at most one such $\{e_{[k,k+(n-1)]}\}$ in $\sigma_q$ with this property. Similarly if $\mu=\{e_{[j,j+(n-1)]}\}$ then $\mu$ fails to be joinable to $\{e_{[k,k+(n-1)]}\}$ if and only if $k\le j\le k+(n-1)$ or $k\le j+(n-1)\le k+(n-1)$, and there are at most two such $\{e_{[k,k+(n-1)]}\}$ in $\sigma_q$ with this property. We now know that for any $0$-simplex $\mu$ in $\lk^{h\uparrow}_Y x$, $\mu$ is joinable in $\lk^{h\uparrow}_Y x$ to an $((s/2)-3)$-face of $\sigma_q$. By Lemma~\ref{lem:popular_simplex}, we conclude that $\lk^{h\uparrow}_Y x$ is $(\lfloor\frac{(s/2)-1}{2}\rfloor-1)$-connected. Recall that $s\ge 4m+2$, and so $\lk^{h\uparrow}_Y x$ is $(m-1)$-connected.
\end{proof}

\begin{figure}[htb]
 \begin{tikzpicture}\centering
  \filldraw[lightgray] (0,0) -- (14,0) -- (13,1) -- (1,1);
  \filldraw[red] (2,0) -- (3,1) -- (4,0)   (6,0) -- (7,1) -- (8,0)   (10,0) -- (11,1) -- (12,0);
  \draw (0,0) -- (14,0) -- (13,1) -- (1,1) -- (0,0)   (1,1) -- (2,0) -- (3,1) -- (4,0) -- (5,1) -- (6,0) -- (7,1) -- (8,0) -- (9,1) -- (10,0) -- (11,1) -- (12,0) -- (13,1);
  \filldraw (0,0) circle (1.5pt)   (2,0) circle (1.5pt)   (4,0) circle (1.5pt)   (6,0) circle (1.5pt)   (8,0) circle (1.5pt)   (10,0) circle (1.5pt)   (12,0) circle (1.5pt)   (14,0) circle (1.5pt);
  \filldraw[white] (1,1) circle (1.5pt)   (3,1) circle (1.5pt)   (5,1) circle (1.5pt)   (7,1) circle (1.5pt)   (9,1) circle (1.5pt)   (11,1) circle (1.5pt)   (13,1) circle (1.5pt);
  \draw (1,1) circle (1.5pt)   (3,1) circle (1.5pt)   (5,1) circle (1.5pt)   (7,1) circle (1.5pt)   (9,1) circle (1.5pt)   (11,1) circle (1.5pt)   (13,1) circle (1.5pt);
  \node at (3,-.3) {$e_{[2,4]}$};   \node at (7,-.3) {$e_{[6,8]}$};   \node at (11,-.3) {$e_{[10,12]}$};
 \end{tikzpicture}
 \caption{The $2$-simplex $\sigma_0$ in $\match_{\{0,2\}}(\Delta^3(15))$. Here $s=6$.}\label{fig:sigma_0}
\end{figure}
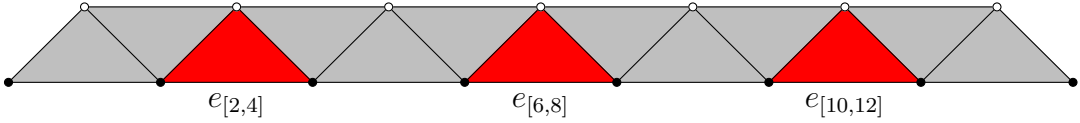

We summarize this section by writing down the proof of Theorem~A.

\begin{proof}[Proof of Theorem~A]
 Let $\chi=a\chi_0+c_0\psi_0+\cdots+c_{n-3}\psi_{n-3}+b\chi_1$ with $[\chi]\in\Sigma^2(F_n)$. If all the $c_i$ are zero then since $[\chi]\in\Sigma^2(F_n)$ we know either $a<0$ or $b<0$, and so $[\chi]\in\Sigma^\infty(F_n)$ by Lemma~\ref{lem:equator}. Now suppose the $c_i$ are not all zero. By Lemma~\ref{lem:push_to_hemispheres}, without loss of generality $a=b=0$. Then by Proposition~\ref{prop:asc_lk_conn} and Corollary~\ref{cor:morse}, $Y^{t\le\chi}$ is $(m-1)$-connected for all $t\in\R$. Hence by Definition~\ref{def:bnsr} we conclude that $[\chi]\in\Sigma^\infty(F_n)$.
\end{proof}


\section{Houghton groups}\label{sec:houghton}

Let $(H_n)_{n\in\N}$ be the family of Houghton groups, introduced in \cite{houghton78}. An element $\eta$ of $H_n$ is an automorphism of $\{1,\dots,n\}\times\N$ such that for each $1\le i\le n$ there exists $m_i\in\Z$ and $N_i\in\N$ such that for all $x\ge N_i$ we have $(i,x)\eta=(i,x+m_i)$. That is, $\eta$ ``eventually acts as translations'' on each ray $\{i\}\times\N$. We have that $H_n$ is of type $\F_{n-1}$ but not of type $\F_n$ \cite[Theorem~5.1]{brown87}.

It is known that $\Hom(H_n,\R)$ is generated by characters $\chi_1,\dots,\chi_n$, given by $\chi_i(\eta)=m_i$ for each $i$ (with $m_i$ as above). Since $\eta$ is an automorphism, $\sum m_i = 0$ for any $\eta$, and hence $\chi_1+\cdots+\chi_n=0$ as characters. In fact for $n\ge 2$, $\chi_1,\dots,\chi_{n-1}$ form a basis of $\Hom(H_n,\R)\cong\R^{n-1}$. Since $H_n$ is of type $\F_{n-1}$, one can ask about $\Sigma^m(H_n)$ for $m\le n-1$. Bieri and Strebel [unpublished], and independently Brown \cite[Proposition~8.3]{brown87bns}, proved that for $n\ge2$ the complement of $\Sigma^1(H_n)$ is $\{[-\chi_i]\}_{i=1}^n$. Note that when $n=2$, $S(H_2)=\{[\chi_1],[-\chi_1]\}$ and $\chi_1=-\chi_2$ so in fact $\Sigma^1(H_2)=\emptyset$.

In this section we prove:

\begin{theorem}\label{thrm:houghton_pos}
 Let $n\in\N$ and let $\chi = a_1\chi_1 + \cdots + a_n \chi_n$ be a non-trivial character, i.e., the $a_i$ are not all equal. Up to symmetry, we can assume $a_1\le \cdots \le a_n$. Let $1\le m(\chi)\le n-1$ be maximal such that $a_{m(\chi)}\ne a_n$. Then $[\chi]\in\Sigma^{m(\chi)-1}(H_n)$.
\end{theorem}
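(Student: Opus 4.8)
The plan is to mimic the structure of Section~\ref{sec:computations}, replacing the Stein--Farley complex $X_n$ for $F_n$ with an analogous $\CAT(0)$ cube complex on which $H_n$ acts, equipped with a ``number of something'' Morse function and a character height function. The natural candidate is a cube complex built from the same strand-diagram/Brown-type combinatorics that Brown used to prove $H_n$ is of type $\F_{n-1}$ but not $\F_n$; vertices should record a finite partial configuration on $\{1,\dots,n\}\times\N$ (how far out along each ray one has ``expanded''), with cubes recording which rays get pushed out further, and with $H_n$ acting freely and cocompactly on sublevel sets of a height function $f$ measuring total expansion. I would first record this setup as a lemma: the complex $Z_n$ is contractible, $H_n$ acts freely on it, sublevel/level sets $Z_n^{p\le f\le q}$ are $H_n$-cocompact, and — crucially — $Z_n^{f\ge p}$ (or an appropriate slab) is $(n-2)$-connected, which is the topological input encoding type $\F_{n-1}$.

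Next I would identify the ascending and descending links with combinatorial complexes, in the spirit of Lemma~\ref{lem:vertex_link} and Corollary~\ref{cor:f_lks}. The descending link with respect to $f$ should be (up to the usual bookkeeping) a join or product of complexes, one factor per ray, each factor measuring the choices of how to ``retract'' a ray; the key point is that this descending link is highly connected — enough so that $Z_n^{p\le f}$ is $(m(\chi)-2)$-connected for $p$ large. Then, fixing $\chi=a_1\chi_1+\cdots+a_n\chi_n$ with $a_1\le\cdots\le a_n$ and $m\defeq m(\chi)$, I would set $h\defeq(\chi,f)$ and, via the Morse Lemma~\ref{lem:morse} (Corollary~\ref{cor:morse}), reduce to showing that the $h$-ascending link of every vertex in an appropriate slab $Y=Z_n^{p\le f\le q}$ is $(m-2)$-connected. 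Here is where the hypothesis $a_{m}\ne a_n$ does the work: because $\chi$ is strictly larger along ray $n$ than along ray $m$ (and in particular is not constant), pushing mass out along the ``heavy'' rays $m+1,\dots,n$ strictly increases $\chi$, and this gives, exactly as in Proposition~\ref{prop:asc_lk_conn}, either an outright cone point or a ``popular simplex'' $\sigma$ to which every vertex of the ascending link is joinable up to a bounded defect; Lemma~\ref{lem:popular_simplex} then converts the size of $\sigma$ into $(m-2)$-connectivity, provided we took $p$ (hence the available ``room'' $q=$ some multiple of $p$) large enough in terms of $m$. Finally, Lemma~\ref{lem:popular_simplex} together with Corollary~\ref{cor:morse} yields that $Y^{t\le\chi}$ is $(m-2)$-connected for all $t$, so $[\chi]\in\Sigma^{m-1}(H_n)=\Sigma^{m(\chi)-1}(H_n)$ by Definition~\ref{def:bnsr}; the symmetry reduction to $a_1\le\cdots\le a_n$ is legitimate because permuting the rays is realized by an (outer) symmetry of $H_n$ permuting the $\chi_i$.

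I expect the main obstacle to be the construction and connectivity analysis of the cube complex $Z_n$ and its $f$-descending links: unlike the $F_n$ case, where $X_n$ and $\match_{n-1}(\Delta^n(r))$ are already in the literature, here one must either extract a suitable $\CAT(0)$ cubical model for $H_n$ from Brown's or Lee's work (or build it from strand diagrams) and then prove the right highly-connectedness statement — this is the analog of Lemma~\ref{lem:top_matching_conn} and Lemma~\ref{lem:sublevel_conn}, and getting the exact connectivity bound $(n-2)$, with enough slack to absorb the later $m$-dependent estimates, is the delicate part. A secondary subtlety is the boundary bookkeeping in the slab $Y=Z_n^{p\le f\le q}$: one must check, exactly as in the two cases $p\le r\le pn$ and $pn\le r\le pn^2$ of Proposition~\ref{prop:asc_lk_conn}, that ascending moves of the relevant type (expansions along heavy rays, or retractions) never push $f$ outside $[p,q]$, so that the cone point or popular simplex actually lives in $Y$ and the flag-with-respect-to-$\sigma$ hypothesis of Lemma~\ref{lem:popular_simplex} holds. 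Everything else — the Morse reduction, the use of symmetry, and the final invocation of Definition~\ref{def:bnsr} — is routine given the machinery already assembled in Sections~\ref{sec:prelims}--\ref{sec:links_matchings}.
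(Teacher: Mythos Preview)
Your overall plan---use Lee's $\CAT(0)$ cube complex for $H_n$, take a cocompact slab, extend $\chi$ to a height function, apply the Morse Lemma and analyze $(\chi,f)$-ascending links---is exactly what the paper does. But the mechanism you propose for the ascending-link connectivity is more complicated than what is actually needed, and as stated it would not give the right bound.

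The paper does \emph{not} use Lemma~\ref{lem:popular_simplex} here. The crucial structural fact is that the $(\chi,f)$-ascending link of a vertex $\phi$ splits as a \emph{join} of an ``up-link'' (neighbors $t_i\circ\phi$ with $m(\chi)+1\le i\le n$) and a ``down-link'' (neighbors $\psi$ with $\phi=t_i\circ\psi$ for $1\le i\le m(\chi)$), because expansions along the heavy rays commute with all retractions along the light rays. The up-link is simply the $(n-m(\chi)-1)$-simplex $\{t_{m(\chi)+1},\dots,t_n\}$, or, near the top of the slab $Y=X_n^{f\le 3n-3}$, a skeleton of it. The down-link is isomorphic to an $f$-descending link in $X_{m(\chi)}$, and its $(m(\chi)-2)$-connectedness is exactly Lee's result (Citation~\ref{cit:houghton_f_desc_lk_conn}). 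One then just adds connectivities under join. So the nontrivial topological input enters directly as one of the two join factors, not via a popular-simplex argument.

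Your proposed route---take the simplex of heavy-ray expansions as a popular simplex---does give contractibility in the ``small $f$'' regime (that simplex really is a cone factor), but in the ``large $f$'' regime the available expansion simplex has dimension at most $n-m(\chi)-1$, and Lemma~\ref{lem:popular_simplex} applied to it alone yields connectivity of order roughly $n-m(\chi)$, not $m(\chi)-2$; for instance it says nothing useful when $m(\chi)=n-1$. You need the down-link's connectivity in that regime, and once you have it the join structure finishes the argument without Lemma~\ref{lem:popular_simplex}. Two smaller corrections: with the normalization $a_{m(\chi)+1}=\cdots=a_n=0$, expanding along a heavy ray \emph{preserves} $\chi$ (and raises $f$), it does not strictly increase $\chi$; and the slab used is $X_n^{f\le 3n-3}$, not one with a large lower bound on $f$.
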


For example, $[\chi_n]\in \Sigma^{n-2}(H_n)$. Note that since $\chi_1+\cdots+\chi_n=0$, without loss of generality $a_{m(\chi)+1}=a_n=0$. With this convention we would for example not write $\chi_n$ but rather $-\chi_1-\cdots-\chi_{n-1}$. Also note that the only $\chi$ with $m(\chi)=1$ are those equivalent to $-\chi_i$, so we recover the fact that the $[-\chi_i]$ are the only things in the complement of $\Sigma^1(H_n)$.

This leaves open the question of whether $[\chi]\not\in\Sigma^{m(\chi)}(H_n)$ always holds, which we expect should be true.

\begin{conjecture}\label{conj:houghton_neg}
 With the setup of Theorem~\ref{thrm:houghton_pos}, moreover $[\chi]\not\in\Sigma^{m(\chi)}(H_n)$.
\end{conjecture}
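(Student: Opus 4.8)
The plan is to keep the action of $H_n$ on the $\CAT(0)$ cube complex $Z$ from Section~\ref{sec:houghton} underlying Theorem~\ref{thrm:houghton_pos}, together with the character height function $h_\chi$ and its secondary Morse function, and to complement the positive link estimate of Theorem~\ref{thrm:houghton_pos} with its negative counterpart: I would show that the links controlling the $h_\chi$-filtration $(Z^{t\le h_\chi})_{t}$ are \emph{exactly} $(m(\chi)-1)$-spherical, rather than merely $(m(\chi)-2)$-connected, and then use discrete Morse theory to convert this into a persistent $(m(\chi)-1)$-homology class, which is precisely what non-membership in $\Sigma^{m(\chi)}(H_n)$ demands. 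First I would normalize $\chi$ so that $a_{m(\chi)+1}=\cdots=a_n=0$; as recorded after Theorem~\ref{thrm:houghton_pos} this forces $a_1,\dots,a_{m(\chi)}<0$, so $\chi$ is genuinely supported on the first $m\defeq m(\chi)$ rays, and the ray-permutation and positive-rescaling symmetries reduce the problem to a short list of normal forms for $(a_1,\dots,a_m)$, with guiding example $\chi=-(\chi_1+\cdots+\chi_m)$.

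The heart of the argument is a link computation dual to the one behind Theorem~\ref{thrm:houghton_pos}. Where that theorem shows the relevant ascending links are $(m-2)$-connected, I would show that for a \emph{generic} vertex $v$ of $Z$ --- one lying well inside the cocompact slab, with room to split and merge in every $\chi$-relevant direction --- the relevant $\chi$-descending link is homotopy equivalent to a nontrivial wedge of $(m-1)$-spheres, hence $(m-2)$-connected but not $(m-1)$-connected. This should come out of the same combinatorial link model (matching-type complexes) used in the positive direction, the extra input being that exactly $m$ of the $n$ rays are active for $\chi$, which pins the top nonvanishing reduced homology of the link to degree $m-1$. Morally this is the Bestvina--Brady mechanism in a relative setting: just as Brown's descending links for $H_n$ are spherical of dimension $n-2$ and thereby witness that $H_n$ is $\F_{n-1}$ but not $\F_n$, the $\chi$-descending links here should be spherical of one dimension below the ``$\chi$-thickness'' of $Z$. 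The cases $m=1$ and $m=2$ are exactly the statements that these links are disconnected, respectively non-simply-connected, which is what underlies the known cases of the conjecture.

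Given such spherical links, the Morse Lemma (Lemma~\ref{lem:morse}) shows that moving the thresholds in the $h_\chi$-filtration attaches, up to homotopy, cones on these links, i.e., attaches cells detecting $(m-1)$-cycles. The remaining and decisive step, and where I expect the real obstruction to sit, is a \emph{no-cancellation} statement: that one can choose a threshold $t$ and an $(m-1)$-cycle in $Z^{t\le h_\chi}$ whose image in $\pi_{m-1}(Z^{s\le h_\chi})$ (equivalently in $H_{m-1}$) is nonzero for \emph{every} $s\le t$, so that $(Z^{t\le h_\chi})_t$ is not essentially $(m-1)$-connected. For $m\le 2$ this persistence can be checked by a direct connectivity or presentation argument, but for $m\ge 3$ one needs a structural reason why the homology produced by one Morse step is not destroyed by later ones --- for instance an $H_n$-equivariant chain-level splitting of the filtered cellular chain complex of $Z$ exhibiting $H_{m-1}$ as a summand stable under the bonding maps, or an explicit non-bounding $(m-1)$-cycle assembled from the spherical links that survives pushing through the entire filtration. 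This is the ``new idea'' the excerpt identifies as missing. It is worth noting that the kernel criterion of Citation~\ref{cit:bnsr_fin_props} does not suffice: applied to $\ker\chi$ for discrete $\chi$ it only shows that at least one of $[\pm\chi]$ leaves $\Sigma^{m}$, and for corner characters such as $\chi_n$ it is the \emph{other} class $[-\chi_n]$ that is already known to fail --- so any proof must really detect homology attached to $[\chi]$ itself, as above.
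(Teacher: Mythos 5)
You have not proved the statement, and you cannot be faulted for it: in the paper this is stated as Conjecture~\ref{conj:houghton_neg}, not a theorem, and no proof is given there either. Your outline is honest about where it stops, and that stopping point is exactly the open content of the conjecture. Even granting your proposed link computation (that suitable $\chi$-descending links are wedges of $(m(\chi)-1)$-spheres), the Morse Lemma only tells you that passing such a vertex either kills $(m(\chi)-1)$-spheres already present or creates new $m(\chi)$-spheres; to contradict essential $(m(\chi)-1)$-connectedness of $(Y^{t\le\chi})_t$ you must exhibit an $(m(\chi)-1)$-cycle in some $Y^{t\le\chi}$ whose image in every $Y^{s\le\chi}$, $s\le t$, is nonzero, and neither your proposal nor the paper supplies a mechanism for this when $m(\chi)\ge 3$. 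The paper's own closing discussion makes the same reduction you sketch: with the normalization $a_1,\dots,a_{m(\chi)}<0$, $a_{m(\chi)+1}=\cdots=a_n=0$, the $f$-descending link of any vertex of $X_n^{0\le\chi}$ stays in $X_n^{0\le\chi}$, so the conjecture reduces to showing $X_n^{0\le\chi}$ is not $(m(\chi)-1)$-connected --- and the paper then says explicitly that this is beyond its techniques. So the ``no-cancellation'' step you flag is not a deferred technicality; it is the entire problem, and your proposal should be read as a plausible research plan rather than a proof.

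Two smaller corrections to your framing. First, the known case $m(\chi)=2$ is not obtained in the paper by finding a non-simply-connected link: it follows from \cite[Theorem~A1]{kochloukova02}, which places the convex hull in $S(H_n)$ of two points of the complement of $\Sigma^1(H_n)$ outside $\Sigma^2(H_n)$. So the base cases of your ``same mechanism, one dimension up'' heuristic were not established by the link computation you propose to generalize; indeed the paper notes that Conjecture~\ref{conj:houghton_neg} is equivalent to a $\Sigma^m$-Conjecture-type statement (the complement of $\Sigma^m(H_n)$ being the union of convex hulls of $\le m$-tuples from the complement of $\Sigma^1(H_n)$), whose analogue for metabelian groups is itself open in general --- a fair measure of the difficulty. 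Second, your remark that Citation~\ref{cit:bnsr_fin_props} cannot settle the matter is correct, but the paper records a stronger obstruction to the other standard shortcut: every proper quotient $Q$ of $H_n$ has $\Sigma^1(Q)=S(Q)$ (every nontrivial normal subgroup contains $A_\infty$), so no retraction onto a quotient can detect the missing classes either. Any eventual proof really must produce persistent $(m(\chi)-1)$-dimensional topology in the filtration itself, as you anticipate.
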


This conjecture holds for low values of $m(\chi)$. When $m(\chi)=1$, without loss of generality $\chi=-\chi_1$, and $[-\chi_1]\not\in\Sigma^1(H_n)$ as mentioned above. When $m(\chi)=2$ (so $n\ge3$), without loss of generality $[\chi]$ lies in the convex hull in $S(H_n)$ of $[-\chi_1]$ and $[-\chi_2]$. Since these are not in $\Sigma^1(H_n)$, \cite[Theorem~A1]{kochloukova02} tells us that $[\chi]$ is not in $\Sigma^2(H_n)$. In general, Conjecture~\ref{conj:houghton_neg} is equivalent to conjecturing that the complement of $\Sigma^m(H_n)$ is the union of all convex hulls of all $\le m$-tuples from the complement of $\Sigma^1(H_n)$; for metabelian groups this is conjectured to always hold, and is called the $\Sigma^m$-Conjecture (see, e.g., \cite[Section~1.3]{bieri10}).

\medskip

We will prove Theorem~\ref{thrm:houghton_pos} by inspecting the proper action of $H_n$ on a proper $\CAT(0)$ cube complex $X_n$. Our reference for $X_n$ is \cite{lee12} (this is a preprint including the author's PhD thesis results). This cube complex was also remarked upon by Brown in \cite{brown87}, though not explicitly constructed. We will not prove everything in the setup here, but will sometimes just cite \cite{lee12}. The vertices of $X_n$ are elements of the monoid $M$ of injective maps $\{1,\dots,n\}\times\N \hookrightarrow \{1,\dots,n\}\times\N$ that are eventually translations. In particular $H_n$ sits in $X_n$ as a discrete set of vertices, namely those maps $\phi$ that are bijective. To describe the higher dimensional cells of $X_n$, we need to discuss $M$ in a bit more detail.

There are $n$ elements of $M$ of particular interest, namely for each $1\le i\le n$ we have a map
$$t_i \colon \{1,\dots,n\}\times\N \to \{1,\dots,n\}\times\N$$
given by sending $(j,x)$ to itself if $j\ne i$ and $(i,x)$ to $(i,x+1)$ for all $x\in\N$. It is clear that for any $\phi\in M$, there exists a product of $t_i$s, say $\tau$, such that $\tau \circ \phi$ is a product of $t_i$s. Here our maps act on the right, so this composition means first do $\tau$, then do $\phi$. Heuristically, $\phi$ acts as translations outside of some finite region $S\subseteq \{1,\dots,n\}\times\N$, so just choose $\tau$ such that the range of $\tau$ lies outside $S$.

Back to defining the higher cells of $X_n$, we now declare that two vertices $\phi,\psi$ share an edge whenever $\phi = t_i \circ \psi$ or $\psi = t_i \circ \phi$ for some $1\le i\le n$. Already we have that $X_n$ is connected, thanks to the discussion in the previous paragraph. Now for $2\le k\le n$, we declare that we have a $k$-cube supported on every set of vertices of the following form: start with a vertex $\phi$, let $K$ be a subset of $\{1,\dots,n\}$ with $|K|=k$, and look at the set of $2^k$ vertices
$$\left\{\left(\prod_{i\in J} t_i\right)\circ\phi \mid J\subseteq K\right\} \text{.}$$
Since the $t_i$ all commute, we do not need to specify an order in which to compose them. These vertices span a $k$-cube in $X_n$. For example, when $n\ge2$ any vertex $\phi$ lies in the square $\{\phi,t_1\circ\phi,t_2\circ\phi, t_1\circ t_2\circ \phi\}$.

It is known that $X_n$ is a $\CAT(0)$ cube complex \cite{lee12}. The group $H_n$ acts on the vertices of $X_n$ via $(\phi)\eta \defeq \phi \circ \eta$, and this extends to an action of $H_n$ on $X_n$. There is an $H_n$-invariant height function $f$ (called $h$ in \cite{lee12}) on the vertices of $X_n$, namely if $\phi\in M$ and $F(\phi) \defeq (\{1,\dots,n\}\times\N) \setminus \image(\phi)$, so $F(\phi)$ is finite, then
$$f(\phi) \defeq |F(\phi)| \text{.}$$
Note that $f(\phi)=0$ if and only if $\phi$ is bijective, i.e., $\phi\in H_n$. It is clear that $f$ is $H_n$-invariant. Also note that for any cube $\sigma$ in $X_n$, there is a unique vertex of $\sigma$ with minimal $f$-value, and so any cube stabilizer is contained in a vertex stabilizer. Vertex stabilizers are finite, since if $\phi \circ \eta = \phi$ then $\eta$ must fix all points outside $F(\phi)$.

In summary, $H_n$ acts properly on the $n$-dimensional proper $\CAT(0)$ cube complex $X_n$. The action is not cocompact, since it is $f$-invariant and $f$ takes infinitely many values, but it is cocompact on $f$-sublevel sets:

\begin{lemma}\label{lem:houghton_cocpt}
 The action of $H_n$ on any $X_n^{p\le f\le q}$ is cocompact.
\end{lemma}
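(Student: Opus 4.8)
The plan is to reduce cocompactness to a statement about vertex orbits. Since $X_n$ is a proper, finite-dimensional cube complex it is locally finite, so every vertex lies in only finitely many cubes; consequently it suffices to show that $H_n$ acts on the vertices of $X_n^{p\le f\le q}$ with only finitely many orbits (a full subcomplex with finitely many vertex orbits then has finitely many cube orbits). The height function $f$ is $H_n$-invariant and takes only the finitely many integer values in $[p,q]$ on $X_n^{p\le f\le q}$, so it is enough to prove the sharper statement that for each integer $r\ge 0$ the group $H_n$ acts transitively on the set $V_r$ of vertices $\phi\in M$ with $f(\phi)=r$; this yields at most $q-p+1$ vertex orbits in $X_n^{p\le f\le q}$.

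To prove transitivity on $V_r$, I would fix $\phi,\psi\in V_r$, write $D\defeq\{1,\dots,n\}\times\N$, and construct an element $\eta\in H_n$ with $\phi\circ\eta=\psi$ by hand. On $\image(\phi)$ set $((x)\phi)\eta\defeq(x)\psi$ for $x\in D$; this is well defined and injective because $\phi$ and $\psi$ are injective, and it maps $\image(\phi)$ bijectively onto $\image(\psi)$. Since $\abs{F(\phi)}=\abs{F(\psi)}=r$, pick any bijection $F(\phi)\to F(\psi)$ and let $\eta$ restrict to it on $F(\phi)=D\setminus\image(\phi)$. As $\{\image(\phi),F(\phi)\}$ and $\{\image(\psi),F(\psi)\}$ are partitions of $D$, the assembled map $\eta\colon D\to D$ is a bijection, and by construction $(x)(\phi\circ\eta)=((x)\phi)\eta=(x)\psi$ for all $x\in D$, so $\phi\circ\eta=\psi$, i.e., $(\phi)\eta=\psi$.

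The remaining point is to check that $\eta$ actually lies in $H_n$, that is, that $\eta$ is eventually a translation on each ray. Fix $i$. Because $\phi$ is eventually a translation by some $m_i$ on $\{i\}\times\N$, we have $(i,x)\phi=(i,x+m_i)$ for all large $x$, so $\image(\phi)$ contains a tail of the ray $\{i\}\times\N$; likewise $\psi$ is eventually a translation by some $m_i'$. Hence for all sufficiently large $y$ we may write $(i,y)=(i,x)\phi$ with $x=y-m_i$ large, and then $(i,y)\eta=(i,x)\psi=(i,y-m_i+m_i')$, so $\eta$ acts as translation by $m_i'-m_i$ on a tail of ray $i$. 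Therefore $\eta\in H_n$, $V_r$ is a single $H_n$-orbit, and the action on $X_n^{p\le f\le q}$ is cocompact.

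I expect the only genuinely substantive step to be this last verification that the bijection $\eta$ built by hand belongs to $H_n$ rather than being an arbitrary permutation of $D$; it follows cleanly from the eventual-translation property of $\phi$ and $\psi$ together with the finiteness of $F(\phi)$ — which forces $\image(\phi)$ to meet each ray cofinitely, hence in a tail — so the whole argument reduces to this observation plus routine bookkeeping about partitions and composition order.
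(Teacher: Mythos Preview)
Your proof is correct and follows essentially the same approach as the paper: reduce to transitivity on vertices with a fixed $f$-value via local finiteness, then build $\eta$ by letting it be $\phi^{-1}\circ\psi$ on $\image(\phi)$ and an arbitrary bijection $F(\phi)\to F(\psi)$ on the complement. You actually supply more detail than the paper on why $\eta$ is eventually a translation (the paper simply asserts this is clear), so your writeup is if anything a slight expansion of the same argument.
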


\begin{proof}
 Since $X_n$ is locally compact, we just need to see that $H_n$ is transitive on vertices with the same $f$ value. Let $\phi$ and $\psi$ be vertices with $f(\phi)=f(\psi)$. Let $\alpha\in S_\infty\le H_n$ be any bijection taking $F(\phi)$ bijectively to $F(\psi)$ (since $f(\phi)=f(\psi)$ such a $\alpha$ exists). Define $\eta\in H_n$ via:
 $$(x)\eta \defeq \left\{\begin{array}{ll}
                          (y)\psi & \text{if } x=(y)\phi \\
                          (x)\alpha & \text{if } x\in F(\phi) \text{.}
                         \end{array}\right.$$
 Now $\phi \circ \eta = \psi$ by definition, and $\eta$ clearly eventually acts by translations, so we just need to show $\eta$ is bijective. Note that $\eta$ takes $\image(\phi)$ bijectively to $\image(\psi)$, and also takes $F(\phi)$ bijectively to $F(\psi)$, so indeed $\eta$ is bijective.
\end{proof}

Extending $f$ to a Morse function on $X_n$ (technically the Morse function is $(f,0)$, if we use our definition of Morse function in Definition~\ref{def:morse}), to figure out the higher connectivity properties of the $X_n^{p\le f\le q}$ it suffices to look at $f$-descending links of vertices.

\begin{cit}\cite[Lemma~3.52]{lee12}\label{cit:houghton_f_desc_lk_conn}
 Let $\phi$ be a vertex in $X_n$. If $f(\phi)\ge 2n-1$ then the descending link of $\phi$ is $(n-2)$-connected.
\end{cit}

In particular, Corollary~\ref{cor:morse} says that $X_n^{f\le q}$ is $(n-2)$ connected for $q\ge 2n-2$. Setting $Y\defeq X_n^{f\le 3n-3}$, we have the whole setup of Definition~\ref{def:bnsr}, namely $H_n$ acts properly and cocompactly on the $(n-2)$-connected complex $Y$ (it will become clear later why we use $3n-3$ instead of $2n-2$). Hence to understand $\Sigma^m(H_n)$, we can inspect filtrations of the form $(Y^{t\le \chi})_{t\in\R}$ for $\chi\in\Hom(H_n,\R)$.

We have to explain what $\chi$ means as a function $Y \to \R$. The generating characters $\chi_i$ of $H_n$ measure the length of the eventual translations that every element of $H_n$ must have. Of course vertices of $X_n$ are also functions on $\{1,\dots,n\}\times\N$ that act as eventual translations, so the $\chi_i$ are all naturally defined on vertices of $X_n$, and extend affinely to $X_n$. Note that whereas $\chi_1+\cdots+\chi_n=0$ as characters on $H_n$, now more generally $\chi_1+\cdots+\chi_n=f$ as functions on $X_n$.

Let $\chi=a_1\chi_1+\cdots a_n\chi_n$ be a non-trivial character of $H_n$, so the $a_i$ are not all equal. Up to symmetry assume $a_1\le\cdots\le a_n$. Choose $1\le m(\chi)\le n-1$ maximal with $a_{m(\chi)}\ne a_n$. Since $\chi_1+\cdots+\chi_n=0$ as a character of $H_n$, without loss of generality $a_{m(\chi)+1}=a_n=0$. For instance, instead of $\chi_n$, we consider the equivalent character $-\chi_1-\cdots-\chi_{n-1}$. In general now the first $m(\chi)$ many coefficients of $\chi$ are negative, and all the coefficients from the $(m(\chi)+1)$st one on are zero.

Consider the function $h\defeq (\chi,f)\colon Y \to \R$ with $(\chi,f)$ ordered lexicographically. This is a Morse function, \`a la Definition~\ref{def:morse}, for reasons similar to those in the proof of Proposition~\ref{prop:char_morse} for $F_n$. The key property is that the basis characters vary by $0$, $1$, or $-1$ between adjacent vertices. We now claim that all $h$-ascending links of vertices in $Y$ are $(m(\chi)-2)$-connected, and then Theorem~\ref{thrm:houghton_pos} will follow from Corollary~\ref{cor:morse}.

\begin{lemma}\label{lem:houghton_asc_lk_model}
 Let $\phi$ be a vertex in $Y$. An adjacent vertex $\psi$ is in the $h$-ascending link of $\phi$ if and only if either
 \begin{enumerate}
  \item $\psi=t_i \circ \phi$ for some $m(\chi)+1\le i\le n$, or else
  \item $\phi=t_i \circ \psi$ for some $1\le i\le m(\chi)$.
 \end{enumerate}
\end{lemma}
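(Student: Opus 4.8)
The plan is to unwind the definitions of the Morse function $h=(\chi,f)$ and of adjacency in $X_n$, and check directly which adjacent vertices have strictly larger $h$-value. Recall that $\psi$ is adjacent to $\phi$ precisely when $\psi = t_i\circ\phi$ (so $f(\psi) = f(\phi)+1$, since applying $t_i$ shrinks the image by one point) or $\phi = t_i\circ\psi$ (so $f(\psi)=f(\phi)-1$) for some $1\le i\le n$. Since $h$ is ordered lexicographically, $\psi$ lies in the $h$-ascending link of $\phi$ exactly when either $\chi(\psi)>\chi(\phi)$, or $\chi(\psi)=\chi(\phi)$ and $f(\psi)>f(\phi)$.

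First I would handle the case $\psi = t_i\circ\phi$. Here $f$ increases by $1$, so $\psi$ is $h$-ascending as soon as $\chi(\psi)\ge\chi(\phi)$. The effect of pre-composing with $t_i$ on the character height functions is that $\chi_i$ decreases by $1$ and all other $\chi_j$ are unchanged (composing with $t_i$ shortens the eventual translation on the $i$th ray by one, in the sign convention where $\chi_j$ is additive under this composition); I would state this as the "key property" already alluded to in the text, with a one-line justification from the definition of $\chi_j$ as the eventual translation length. Writing $\chi = a_1\chi_1+\cdots+a_n\chi_n$, we get $\chi(\psi) = \chi(\phi) - a_i$. Since after normalization $a_i<0$ for $1\le i\le m(\chi)$ and $a_i=0$ for $m(\chi)+1\le i\le n$, we have $\chi(\psi)>\chi(\phi)$ when $1\le i\le m(\chi)$, but wait — that would make those $h$-ascending too, which contradicts the statement; so I must be careful about the sign convention. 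The correct reading: composing $t_i\circ\phi$ \emph{lengthens} the $i$th translation by one relative to $\phi$, so $\chi_i(t_i\circ\phi) = \chi_i(\phi)+1$. Then $\chi(t_i\circ\phi) = \chi(\phi)+a_i$, which is $>\chi(\phi)$ iff $a_i>0$, i.e. never (after normalization $a_i\le 0$), and $=\chi(\phi)$ iff $a_i=0$, i.e. iff $m(\chi)+1\le i\le n$. Combined with the $f$-increase, this gives case (i) exactly. For $\phi = t_i\circ\psi$, we have $\chi(\psi) = \chi(\phi) - a_i$ and $f(\psi) = f(\phi)-1$, so $\psi$ is $h$-ascending iff $\chi(\psi)>\chi(\phi)$, i.e. iff $-a_i>0$, i.e. iff $1\le i\le m(\chi)$; this is case (ii).

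I would also note, for completeness, that one must check $\psi$ actually lies in $Y$, i.e. that $f(\psi)\le 3n-3$: in case (i) $f(\psi)=f(\phi)+1$, which is fine unless $f(\phi)=3n-3$, but then there are no ascending splitting moves and the statement is vacuously consistent; in case (ii) $f(\psi)<f(\phi)$, which is automatic. (This boundary subtlety is exactly why the ascending \emph{link in $Y$} can differ from the ascending link in $X_n$, and it will matter in the subsequent connectivity argument, not here.) The main obstacle is really just pinning down the sign convention in the relation $\chi_i(t_i\circ\phi) = \chi_i(\phi)\pm 1$ consistently with the normalization $a_i\le 0$ made just above the lemma; once that is fixed, the proof is a two-line case check. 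I do not anticipate any genuine difficulty beyond bookkeeping.
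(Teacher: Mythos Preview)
Your argument is correct and follows essentially the same route as the paper: both proofs reduce to the observation that passing from $\phi$ to $t_i\circ\phi$ raises $\chi_i$ (and $f$) by one and leaves the other $\chi_j$ unchanged, and then read off the two cases from the normalization $a_1,\dots,a_{m(\chi)}<0=a_{m(\chi)+1}=\cdots=a_n$. Your mid-proof sign correction lands on the convention the paper uses, and your closing remark about membership in $Y$ is an extra bookkeeping comment the paper defers to the next proposition; just clean up the self-dialogue before writing it up.
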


\begin{proof}
 That $\psi$ is in the link of $\phi$ means there is $1\le i\le n$ such that $\psi=t_i \circ \phi$ or $\phi=t_i \circ \psi$. In the former case, $\psi$ has higher $\chi_i$ and $f$ values than $\phi$ and equal $\chi_j$ values (for $j\ne i$), and in the latter case $\psi$ has lower $\chi_i$ and $f$ values than $\phi$ and equal $\chi_j$ values (for $j\ne i$). Hence in the former case $\psi$ in the $h$-ascending link of $\phi$ if and only if $m(\chi)+1\le i\le n$, since then $\chi$ does not change but $f$ goes up, and in the latter case $\psi$ in the $h$-ascending link of $\phi$ if and only if $1\le i\le m(\chi)$, since then $\chi$ goes up.
\end{proof}

Note that if $\phi=t_i \circ \psi$ and $\psi'=t_j \circ \phi$ ($i\ne j$) then $\psi$ and $\psi'$ share an edge in $\lk\phi$. In particular $\lk^{h\uparrow}_Y \phi$ is a join, of its intersection with $\lk^{f\uparrow}_Y \phi$ and its intersection with $\lk^{f\downarrow}_Y \phi$. Call the former the \emph{ascending up-link} and the latter the \emph{ascending down-link}. The two cases in Lemma~\ref{lem:houghton_asc_lk_model} are thus complete descriptions of the vertices in, respectively, the ascending up-link and ascending down-link.

\begin{proposition}\label{prop:houghton_asc_lk_conn}
 Let $\phi$ be a vertex in $Y$. Then $\lk^{h\uparrow}_Y \phi$ is $(m(\chi)-2)$-connected.
\end{proposition}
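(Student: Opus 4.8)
The plan is to use the join decomposition $\lk^{h\uparrow}_Y \phi = A * B$ noted after Lemma~\ref{lem:houghton_asc_lk_model}, where $A$, the \emph{ascending up-link}, is the full subcomplex of $\lk_Y\phi$ on the vertices $t_i\circ\phi$ with $m(\chi)<i\le n$, and $B$, the \emph{ascending down-link}, is the full subcomplex on the vertices $\psi$ with $\phi=t_i\circ\psi$ for some $1\le i\le m(\chi)$. The argument splits according to the size of $f(\phi)$: when $f(\phi)$ is small, $A$ is a full simplex (so $\lk^{h\uparrow}_Y\phi$ is contractible), and when $f(\phi)$ is large, $B$ is a highly connected chessboard complex (so the join is $(m(\chi)-2)$-connected regardless of $A$).

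\textit{The case $f(\phi)\le 2n-2$.} Here I would observe that each $t_i$ raises $f$ by exactly $1$, so for any $J\subseteq\{m(\chi)+1,\dots,n\}$ the cube spanned by $\{(\prod_{i\in J'}t_i)\circ\phi\mid J'\subseteq J\}$ has top $f$-value $f(\phi)+|J|\le(2n-2)+(n-1)=3n-3$ and therefore lies inside $Y=X_n^{f\le 3n-3}$. (This is exactly why $Y$ is truncated at $3n-3$ rather than $2n-2$.) Consequently $A$ is the full $(n-m(\chi)-1)$-simplex on its vertices, hence contractible, and a join with a contractible complex is contractible, so $\lk^{h\uparrow}_Y\phi$ is contractible.

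\textit{The case $f(\phi)\ge 2n-1$.} Here the point is to identify $B$ with a chessboard complex. I would first recall the model of $\lk^{f\downarrow}_{X_n}\phi$ underlying Citation~\ref{cit:houghton_f_desc_lk_conn}: an adjacent vertex $\psi$ with $f(\psi)<f(\phi)$ must satisfy $\phi=t_i\circ\psi$ for a unique $i$, and such a $\psi$ is then determined by the pair $\bigl(i,\,(i,1)\psi\bigr)$ with $(i,1)\psi\in F(\phi)$; a finite set of these vertices spans a simplex precisely when both their first coordinates and their second coordinates are pairwise distinct (the common lower vertex is obtained by simultaneously ``un-applying'' the relevant $t_i$ and sending each $(i,1)$ to its prescribed point of $F(\phi)$, using that the $t_i$ commute and that the prescribed points are distinct elements of $F(\phi)$). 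Thus $\lk^{f\downarrow}_{X_n}\phi$ is the chessboard complex $\Delta_{n,N}$ with $N\defeq f(\phi)$, and since moving downward never raises $f$, this whole complex already lies in $Y$. The ascending down-link $B$ is then the full subcomplex on vertices whose first coordinate lies in $\{1,\dots,m(\chi)\}$, i.e.\ $B\cong\Delta_{m(\chi),N}$. From $m(\chi)\le n-1$ we get $N\ge 2n-1\ge 2m(\chi)+1$, so the standard connectivity bound for chessboard complexes (see \cite{lee12} and the references therein) gives that $\Delta_{m(\chi),N}$ is $\bigl(\min\{m(\chi),N,\lfloor\tfrac{m(\chi)+N+1}{3}\rfloor\}-2\bigr)$-connected, which equals $(m(\chi)-2)$-connected. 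Since $\lk^{h\uparrow}_Y\phi=A*B$ is a join of $B$ with the (possibly empty) complex $A$, it is at least $(m(\chi)-2)$-connected.

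\textit{Main obstacle.} The only step that is not routine bookkeeping is the chessboard-complex description of $\lk^{f\downarrow}_{X_n}\phi$ together with its restriction to first coordinates $\le m(\chi)$; this is essentially the combinatorial heart of \cite{lee12}, and once it is in hand the connectivity estimate is a black-box citation. I would also want to confirm that the join decomposition survives the truncation to $Y$, but this is immediate: a simplex $\sigma_A*\sigma_B$ of $A*B$ spans a cube whose top $f$-value is $f(\phi)+(\dim\sigma_A+1)$ — the down-part imposes no upper constraint — which is exactly the condition defining membership of $\sigma_A$ in the $Y$-truncated $A$. Hence $\lk^{h\uparrow}_Y\phi=A*B$ holds on the nose, and the two cases above finish the proof.
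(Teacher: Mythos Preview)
Your proof is correct and follows the same overall strategy as the paper: use the join decomposition $\lk^{h\uparrow}_Y\phi = A * B$ and invoke the chessboard/descending-link connectivity for $B$. Your version is in fact slightly cleaner than the paper's in the second case. The paper splits at the threshold $f(\phi)\le 2n+m(\chi)-3$ versus $f(\phi)\ge 2n+m(\chi)-2$; in the large-$f$ range the paper computes that the truncated up-link $A$ is the $(3n-f(\phi)-4)$-skeleton of a simplex, hence $(3n-f(\phi)-5)$-connected, and then combines the two connectivities via the join formula. You instead split at $f(\phi)\le 2n-2$ versus $f(\phi)\ge 2n-1$ and in the second case simply observe that $A*B$ is at least as connected as $B$, so the connectivity of $A$ is irrelevant. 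Both thresholds work (yours is coarser but sufficient since $|J|\le n-m(\chi)\le n-1$), and your shortcut in case~2 avoids the unnecessary skeleton computation. The chessboard identification $B\cong\Delta_{m(\chi),N}$ and the bound $N\ge 2n-1\ge 2m(\chi)-1$ needed for $(m(\chi)-2)$-connectivity are exactly what underlies Citation~\ref{cit:houghton_f_desc_lk_conn}, so both arguments ultimately rest on the same black box.
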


\begin{proof}
 We know $0\le f(\phi)\le 3n-3$. First suppose $0\le f(\phi)\le 2n+m(\chi)-3$. The subscripts $i$ for which $t_i \circ \phi$ is ascending are those satisfying $m(\chi)+1\le i\le n$, so there are $n-m(\chi)$ of them, and since $(2n+m(\chi)-3) + (n-m(\chi)) = 3n-3$ we have in this case that the entire $f$-ascending link of $\phi$ in $X_n$ is contained in $Y$. This tells us that the ascending up-link of $\phi$ consists of the $(n-m(\chi)-1)$-simplex $\{t_{m(\chi)+1},\dots,t_n\}$, which is contractible, and hence $\lk^{h\uparrow}_Y \phi$ is contractible.
 
 Now suppose $2n+m(\chi)-2 \le f(\phi)\le 3n-3$, so $Y$ does not contain the entire ascending up-link of $\phi$ in $X_n$, but rather only its $(3n-f(\phi)-4)$-skeleton. This is the $(3n-f(\phi)-4)$-skeleton of an $(n-m(\chi)-1)$-simplex, so it is $(3n-f(\phi)-5)$-connected. Since $f(\phi)\ge 2n+m(\chi)-2 \ge 2n-1$ though, in this case we have that the entire ascending down-link of $\phi$ in $X_n$ is contained in $Y$. Lemma~\ref{lem:houghton_asc_lk_model} tells us that this $h$-ascending down-link is isomorphic to the $f$-descending link in $X_{m(\chi)}$ of a vertex with $f$ value equal to $f(\phi)$. Since $f(\phi)\ge 2n+m(\chi)-2 \ge 2m(\chi)-1$, this is $(m(\chi)-2)$-connected by Citation~\ref{cit:houghton_f_desc_lk_conn}. In this case, taking the join, we see that $\lk^{h\uparrow}_Y \phi$ is $((3n-f(\phi)-4)+(m(\chi)-1))$-connected, and hence $(3n-f(\phi)+m(\chi)-5)$-connected. The result now follows since $f(\phi)\le 3n-3$.
\end{proof}

\begin{proof}[Proof of Theorem~\ref{thrm:houghton_pos}]
 The superlevel sets $Y^{t\le \chi}$ are all $(m(\chi)-2)$-connected by Corollary~\ref{cor:morse} and Proposition~\ref{prop:houghton_asc_lk_conn}, so by Definition~\ref{def:bnsr}, $[\chi]\in\Sigma^{m(\chi)-1}(H_n)$.
\end{proof}

\medskip

As for negative properties, i.e., Conjecture~\ref{conj:houghton_neg}, it is difficult in general to tell using Morse theory that a filtration is \emph{not} essentially $(m-1)$-connected. Even if we know the ascending link of a vertex is not $(m-1)$-connected, we do not know whether gluing in that vertex served to kill a pre-existing $(m-1)$-sphere, or served to create a new $m$-sphere. For example if a vertex's ascending link is two points, we do not know whether gluing in that vertex connects up two previous disconnected components, or creates a loop. This is basically what makes it so difficult to prove that character classes lie in $S(G)\setminus\Sigma^m(G)$; for example even when $G$ is metabelian this problem remains open in general.

As a remark, to show that $(Y^{t\le \chi})_{t\in\R}$ is not essentially $(m(\chi)-1)$-connected, it suffices to prove that $Y^{0\le\chi}$ is not $(m(\chi)-1)$-connected, by tricks for negative properties discussed in \cite{witzel15}. Also, thanks to how we have realized $\chi$ as a linear combination of the $\chi_i$ using non-positive coefficients, for any vertex $x\in X_n^{0\le\chi}$ the whole $f$-descending link of $x$ lies in $X_n^{0\le\chi}$. Hence $Y^{0\le\chi}$ is $(m(\chi)-1)$-connected if and only if $X_n^{0\le\chi}$ is. This reduces Conjecture~\ref{conj:houghton_neg} to proving that $X_n^{0\le\chi}$ is not $(m(\chi)-1)$-connected, but this is still a hard problem when $m(\chi)>2$, beyond the scope of our present techniques.

As a final remark, one typical trick for deducing negative properties is finding a retract onto a more manageable quotient with negative properties. However, for $H_n$, every proper quotient $Q$ has $\Sigma^1(Q)=S(Q)$, and so it seems very unlikely this trick could work. To see this fact about such $Q$, we note that for $n\ge3$, $[H_n,H_n]=S_\infty$, the infinite symmetric group, and the second derived subgroup $H_n^{(2)}$ is $A_\infty$, the infinite alternating group. One can check that every non-trivial normal subgroup of $H_n$ contains $A_\infty$, so any $Q$ as above is a quotient of $H_n/A_\infty$. But every kernel of a character on $H_n$ becomes finitely generated when taken mod $A_\infty$, so indeed $\Sigma^1(Q)=S(Q)$.

\providecommand{\bysame}{\leavevmode\hbox to3em{\hrulefill}\thinspace}
\providecommand{\MR}{\relax\ifhmode\unskip\space\fi MR }
\providecommand{\MRhref}[2]{%
  \href{http://www.ams.org/mathscinet-getitem?mr=#1}{#2}
}
\providecommand{\href}[2]{#2}

\end{document}